\newtheorem{theorem}{Theorem}[section]
\newtheorem{lemma}[theorem]{Lemma}
\newtheorem{proposition}[theorem]{Proposition}
\newtheorem{corollary}[theorem]{Corollary}
\newtheorem{definition}[theorem]{Definition}
\newtheorem{assumption}[theorem]{Assumption}
\begin{document}
\setlength\arraycolsep{2pt}
\title{Singular Equations Driven by an Additive Noise and Applications}
\author{Nicolas MARIE}
\address{Laboratoire Modal'X Universit\'e Paris-Ouest 92000 Nanterre}
\email{nmarie@u-paris10.fr}
\address{Laboratoire ISTI, ESME Sudria, Paris, 75015, France}
\email{marie@esme.fr}
\keywords{Ergodic theorem, Fractional Brownian motion, Gaussian processes, Heston model, Sensitivities, Stochastic differential equations}
\date{}
\maketitle
%


%
\begin{abstract}
In the pathwise stochastic calculus framework, the paper deals with the general study of equations driven by an additive Gaussian noise, with a drift function having an infinite limit at point zero.
\\
An ergodic theorem and the convergence of the implicit Euler scheme are proved. The Malliavin calculus is used to study the absolute continuity of the distribution of the solution. An estimation procedure of the parameters of the random component of the model is provided.
\\
The properties are transferred on a class of singular stochastic differential equations driven by a multiplicative noise. A fractional Heston model is introduced.
\end{abstract}
\tableofcontents
\noindent
\textbf{MSC2010 :} 60H10.
%


%
\section{Introduction}
\noindent
Let $B := (B_t)_{t\in\mathbb R_+}$ be a centered stochastic process with locally $\alpha$-H\"older continuous paths, and consider the stochastic differential equation
\begin{equation}\label{main_equation}
X_t =
x_0 +
\int_{0}^{t}b(X_s)ds +
\sigma B_t
\end{equation}
where $\alpha\in ]0,1[$, $x_0\in\textrm I$, $\textrm I\subset\mathbb R$ is an interval, $\sigma\in\mathbb R^* :=\mathbb R -\{0\}$ and $b :\textrm I\rightarrow\mathbb R$ is a $[1/\alpha] + 1$ times continuously differentiable function.
\\
\\
Assume that $\textrm I =\mathbb R$ and $b$ is everywhere differentiable with bounded derivatives. Then, Equation (\ref{main_equation}) has a unique (pathwise) solution defined on $\mathbb R_+$ with locally $\alpha$-H\"older continuous paths (see Friz and Victoir \cite{FV10}, sections 10.3 and 10.7).
\\
If in addition $B$ is a fractional Brownian motion (see Nualart \cite{NUALART06}, Chapter 5), the probabilistic and statistical properties of the solution of Equation (\ref{main_equation}) have been deeply studied by several authors (see Hairer \cite{HAIRER05}, Tudor and Viens \cite{TV07}, Neuenkirch and Tindel \cite{NT14}, etc.).
\\
\\
Throughout the paper, $\textrm I = ]0,\infty[$ and
\begin{displaymath}
\lim_{x\rightarrow 0^+}
b(x) =\infty.
\end{displaymath}
The existence and the uniqueness of the solution of Equation (\ref{main_equation}), and the absolute continuity of its distribution for a fractional Brownian signal of Hurst parameter belonging to $]1/2,1[$ have been already studied in Hu, Nualart and Song \cite{HNS08}.
\\
The current paper deals with a general study of Equation (\ref{main_equation}) in the pathwise stochastic calculus framework (see Lyons \cite{LYONS98}, Lyons and Qian \cite{LQ02}, Gubinelli and Lejay \cite{GL09}, Lejay \cite{LEJAY10}, Friz and Victoir \cite{FV10}, etc.) under the following Assumption.
%


%
\begin{assumption}\label{drift_assumption}
\white .\black
\begin{enumerate}
 \item The function $b$ is $[1/\alpha] + 1$ times continuously differentiable on $]0,\infty[$ and has bounded derivatives on $[\varepsilon,\infty[$ for every $\varepsilon > 0$.
 \item There exists a constant $K > 0$ such that :
 \begin{displaymath}
 \forall x > 0
 \textrm{$,$ }
 \dot b(x) < -K.
 \end{displaymath}
 \item There exists a constant $R > 0$ such that :
 \begin{displaymath}
 \forall x > 0
 \textrm{$,$ }
 b(x) > -Rx.
 \end{displaymath}
 \item For every $C > 0$,
 \begin{displaymath}
 \int_{0}^{T}
 b(Ct^{\alpha})dt =\infty
 \textrm{ $;$ }
 \forall T > 0
 \end{displaymath}
 or
 \begin{displaymath}
 \lim_{T\rightarrow 0^+}
 \frac{1}{T^{\alpha}}\int_{0}^{T}
 b(Ct^{\alpha})dt =\infty.
 \end{displaymath}
\end{enumerate}
\end{assumption}
\noindent
The second section is devoted to deterministic properties of Equation (\ref{main_equation}) : the global existence and the uniqueness of the solution, the regularity of the It\^o map, the convergence of the implicit Euler scheme and some estimates.
\\
\\
The third section is devoted to probabilistic and statistical properties of the solution $X(x_0)$ of Equation (\ref{main_equation}), obtained via its deterministic properties proved at Section 2 and various additional conditions on the signal $B$. In order to ensure the integrability of estimates, $B$ is a Gaussian process in the major part of Section 3.
\\
Subsection 3.1 deals with the ergodicity of $X(x_0)$, studied in the random dynamical systems framework (see Arnold \cite{ARNOLD98}). By assuming that $B$ is a fractional Brownian motion, the existence of an attracting stationary solution of Equation (\ref{main_equation}) and an ergodic theorem are proved.
\\
Subsection 3.2 deals with applications of the Malliavin calculus (see Nualart \cite{NUALART06}) to the absolute continuity of the distribution of $X_t(x_0)$ for every $t\in ]0,T]$. Via Nourdin and Viens \cite{NV09}, a density with a suitable expression is provided.
\\
Subsection 3.3 deals with the integrability and the convergence of the implicit Euler scheme. A rate of convergence is provided.
\\
Subsection 3.4 deals with a relationship between $X(x_0)$ and an Ornstein-Uhlenbeck process. By assuming that $B$ is a fractional Brownian motion of Hurst parameter $H\in ]1/2,1[$, an estimation procedure of $(H,\sigma)$ is provided by using Melichov \cite{MELICHOV11}, Brouste and Iacus \cite{BI13}, and Berzin and Le\'on \cite{BL08}. On the fractional Ornstein-Uhlenbeck process, see Cheridito et al. \cite{CKM03} and Garrido-Atienza et al. \cite{GAKN09}.
\\
\\
The fourth section is devoted to the transfer of the properties established at sections 2 and 3 on a class of singular stochastic differential equations driven by a multiplicative noise. In particular, it covers and completes Marie \cite{MARIE14} on a generalized Cox-Ingersoll-Ross model.
\\
Subsection 4.2 deals with a Heston model (see Heston \cite{HESTON93}) in which the volatility is modeled by a fractional Cox-Ingersoll-Ross equation in order to take benefits of the long memory and of the regularity of the paths of the fractional Brownian motion as in Comte, Coutin and Renault \cite{CCR12}.
\\
\\
\textbf{Notations.} Let $\textrm J\subset\mathbb R$ be a compact interval.
\begin{itemize}
 \item The space $C^0(\textrm J,\mathbb R)$ of the continuous functions from $\textrm J$ into $\mathbb R$ is equipped with the uniform norm $\|.\|_{\infty,\textrm J}$ defined by :
 \begin{displaymath}
 \|x\|_{\infty,\textrm J} :=\sup_{t\in\textrm J}|x_t|
 \end{displaymath}
 for every $x\in C^0(\textrm J,\mathbb R)$. If $\textrm J = [0,T]$ with $T > 0$, the uniform norm is denoted by $\|.\|_{\infty,T}$.
 \item The space $C^{\alpha}(\textrm J,\mathbb R)$ of the $\alpha$-H\"older continuous functions from $\textrm J$ into $\mathbb R$ is equipped with $\|.\|_{\infty,T}$, or with the $\alpha$-H\"older norm $\|.\|_{\alpha,\textrm J}$ defined by :
 \begin{displaymath}
 \|x\|_{\alpha,\textrm J} :=\sup_{(s,t)\in\textrm J^2\textrm{ $:$ }s < t}
 \frac{|x_t - x_s|}{|t - s|^{\alpha}}
 \end{displaymath}
 for every $x\in C^{\alpha}(\textrm J,\mathbb R)$. If $\textrm J = [0,T]$ with $T > 0$, the $\alpha$-H\"older norm is denoted by $\|.\|_{\alpha,T}$.
 \item The space $C^0(\mathbb R_+,\mathbb R)$ (resp. $C^{\alpha}(\mathbb R_+,\mathbb R)$) of the continuous functions from $\mathbb R_+$ into $\mathbb R$ (resp. of the locally $\alpha$-H\"older continuous functions from $\mathbb R_+$ into $\mathbb R$) is equipped with the compact-open topology (i.e. for every sequence $(f_n)_{n\in\mathbb N}$ of $C^0(\mathbb R_+,\mathbb R)$, $f_n\rightarrow f$ when $n\rightarrow\infty$ for the compact-open topology if and only if, for every compact subset $\textrm K$ of $\mathbb R_+$,
 \begin{displaymath}
 \lim_{n\rightarrow\infty}
 \|f_n - f\|_{\infty,\textrm K} = 0).
 \end{displaymath}
\end{itemize}
%


%
\section{Deterministic properties of the solution}
\noindent
The section deals with the global existence and the uniqueness of the solution of Equation (\ref{main_equation}), the regularity of the It\^o map, the convergence of the implicit Euler scheme and some estimates.
\\
\\
First of all, some examples of drift functions satisfying Assumption \ref{drift_assumption} are provided.
\\
\\
\textbf{Examples.} Consider $u,v,w,\gamma,\lambda,\mu > 0$.
\begin{itemize}
 \item Put $b_1(x) := u(vx^{-\gamma} - wx)$ for every $x > 0$. If $1 -\alpha <\alpha\gamma$, then $b_1$ satisfies Assumption \ref{drift_assumption}.
 \item Put $b_2(x) := u/(e^{vx^{\gamma}} - 1) - wx$ for every $x > 0$. If $1\leqslant\alpha\gamma$, then $b_2$ satisfies Assumption \ref{drift_assumption}.
 \item Put $b_{1}^{*}(x) := \lambda\sin(\mu x)$ for every $x > 0$. If $1 -\alpha <\alpha\gamma$ (resp. $1\leqslant\alpha\gamma$) and $\lambda\mu < uw$ (resp. $\lambda\mu < w$), then $b_1 + b_{1}^{*}$ (resp $b_2 + b_{1}^{*}$) satisfies Assumption \ref{drift_assumption}.
 \item Put $b_{2}^{*}(x) := \lambda\log(\mu x)$ for every $x > 0$. If $1 -\alpha <\alpha\gamma$ (resp. $1\leqslant\alpha\gamma$), then $b_1 + b_{2}^{*}$ (resp $b_2 + b_{2}^{*}$) satisfies Assumption \ref{drift_assumption}.
\end{itemize}
%


%
\subsection{Existence and uniqueness of the solution}
The subsection deals with the global existence, the uniqueness and an estimate of the solution of Equation (\ref{main_equation}).
\\
\\
Consider the deterministic analog of Equation (\ref{main_equation}) :
\begin{equation}\label{main_equation_deterministic}
x_t = x_0 +
\int_{0}^{t}b(x_s)ds +\sigma w_t
\end{equation}
with $w\in C^{\alpha}(\mathbb R_+,\mathbb R)$.
\\
\\
By Assumption \ref{drift_assumption}.(1), Equation (\ref{main_equation_deterministic}) has a unique solution on $[0,T_0]$, where
\begin{displaymath}
T_0 :=
\inf\{t > 0 : x_t = 0\}
\end{displaymath}
with the convention $\inf(\emptyset) =\infty$.
%


%
\begin{proposition}\label{existence_solution}
Under Assumption \ref{drift_assumption}, Equation (\ref{main_equation_deterministic}) has a unique $]0,\infty[$-valued solution on $\mathbb R_+$.
\end{proposition}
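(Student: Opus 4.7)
The plan is to prove that $T_0=\infty$; together with the local existence and uniqueness statement preceding the proposition, this yields the result. I would first dispose of the easier threat, namely explosion to $+\infty$: since $b$ is strictly decreasing by Assumption \ref{drift_assumption}.(2), on any subinterval $[s,t]\subset[0,T_0)$ on which $x_u\geqslant 1$ one has $b(x_u)\leqslant b(1)$, so the integral form gives $x_t\leqslant x_s+b(1)(t-s)+\sigma(w_t-w_s)$ and $x$ stays bounded on every compact subinterval of $[0,T_0)$.

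The heart of the proof is to contradict $T_0<\infty$. Continuity then forces $x_{T_0}=0$, and since $b(x)\to+\infty$ as $x\to 0^+$ there exists $\eta>0$ such that $b(x_s)>0$ for every $s\in[T_0-\eta,T_0)$. Rewriting the equation backwards from $T_0$,
\begin{displaymath}
x_t=-\int_t^{T_0}b(x_s)\,ds+\sigma(w_t-w_{T_0}),
\end{displaymath}
together with $\int_t^{T_0}b(x_s)\,ds>0$ and the $\alpha$-H\"older control $|w_t-w_{T_0}|\leqslant\|w\|_{\alpha,T_0}(T_0-t)^\alpha$, yields the self-improving upper bound
\begin{displaymath}
0<x_t\leqslant C_1(T_0-t)^\alpha,\qquad C_1:=|\sigma|\,\|w\|_{\alpha,T_0},
\end{displaymath}
valid on $[T_0-\eta,T_0)$. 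By the strict monotonicity of $b$ this promotes to the pointwise lower bound $b(x_s)\geqslant b(C_1(T_0-s)^\alpha)$, hence after the change of variable $u=T_0-s$,
\begin{displaymath}
\int_t^{T_0}b(x_s)\,ds\geqslant\int_0^{T_0-t}b(C_1 u^\alpha)\,du.
\end{displaymath}

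Assumption \ref{drift_assumption}.(4) with $C=C_1$ now closes the argument. Under its first alternative the right-hand side is already $+\infty$, contradicting $\int_t^{T_0}b(x_s)\,ds\leqslant C_1(T_0-t)^\alpha<\infty$. Under the second alternative, dividing by $(T_0-t)^\alpha$ and sending $t\to T_0^-$ forces $C_1\geqslant+\infty$, again a contradiction. Hence $T_0=\infty$.

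I expect the only delicate point to be the self-improving bound $x_t\leqslant C_1(T_0-t)^\alpha$: it is precisely the device that converts the pure H\"older regularity of $w$ into the scaling $b(Ct^\alpha)$ present in Assumption \ref{drift_assumption}.(4); the rest is bookkeeping. Uniqueness of the $\mathbb R_+^*$-valued solution is inherited from the classical Picard--Lindel\"of argument applied to the $C^1$ variable $y_t:=x_t-\sigma w_t$, which satisfies the non-autonomous but locally Lipschitz ODE $\dot y_t=b(y_t+\sigma w_t)$ on $(0,\infty)$.
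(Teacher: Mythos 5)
Your proof is correct, and the key mechanism is the same as the paper's: derive the self-improving bound $x_t\leqslant C(T_0-t)^{\alpha}$ near the hitting time, push it through the monotonicity of $b$ to get $\int_t^{T_0}b(x_s)\,ds\geqslant\int_0^{T_0-t}b(Cu^{\alpha})\,du$, and contradict Assumption 1.4. The one genuine difference is how the positivity of the drift term is secured: the paper applies the change of variable $y=e^{R\cdot}x$, so that by Assumption 1.3 the transformed drift $b^R(t,u)=Ru+e^{Rt}b(e^{-Rt}u)$ is positive on all of $\mathbb R_+\times\mathbb R_+^*$ and the bound $y_s\leqslant\|w^R\|_{\alpha,T_0}(T_0-s)^{\alpha}$ holds on the whole interval $[0,T_0]$; you instead localize to $[T_0-\eta,T_0)$, where $x_s$ is small enough that $b(x_s)>0$ directly (using $b(x)\to\infty$ as $x\to 0^+$, a standing hypothesis of the paper that also follows from assumptions 1.2 and 1.4). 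Your variant is slightly more elementary and, notably, does not invoke Assumption 1.3 at all in this proposition, whereas the paper's change of variable gives a global estimate it can reuse. Your explicit treatment of non-explosion to $+\infty$ (absent from the paper's proof of this proposition, but essentially Proposition 2.2) and your Picard--Lindel\"of remark for uniqueness are both fine.
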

%


%
\begin{proof}
Assume that $T_0 <\infty$ and put $y := e^{R.}x$ on $[0,T_0]$. For every $t\in [0,T_0]$, by the rough change of variable formula (see Gubinelli and Lejay \cite{GL09}, Lemma 6) :
\begin{eqnarray}
 y_t & = & y_0 +
 \int_{0}^{t}
 Re^{Rs}x_sds +
 \int_{0}^{t}
 e^{Rs}dx_s
 \nonumber\\
 \label{auxiliary_equation_existence}
 & = &
 y_0 +
 \int_{0}^{t}
 b^R(s,y_s)ds +
 \sigma w_{t}^{R}
\end{eqnarray}
where
\begin{displaymath}
b^R(t,u) = Ru + e^{Rt}b(e^{-Rt}u)
\end{displaymath}
for every $u > 0$, and
\begin{displaymath}
w_{t}^{R} :=
\int_{0}^{t}e^{Rs}dw_s.
\end{displaymath}
For $t\in [0,T_0]$ arbitrarily chosen, by Equation (\ref{auxiliary_equation_existence}) :
\begin{displaymath}
y_t +\int_{t}^{T_0}b^R(s,y_s)ds =
\sigma(w_{t}^{R} - w_{T_0}^{R}).
\end{displaymath}
Then, since $w^R$ is $\alpha$-H\"older continuous on $[0,T_0]$ and $b^R(s,u) > 0$ for every $(s,u)\in\mathbb R_+\times ]0,\infty[$ by Assumption \ref{drift_assumption}.(3) :
\begin{eqnarray*}
 y_s & \leqslant &
 |\sigma|\|w^R\|_{\alpha,T_0}|s - T_0|^{\alpha}
 \textrm{ $;$ $\forall s\in [0,T_0]$ and}\\
 \int_{t}^{T_0}
 b^R(s,y_s)ds & \leqslant &
 |\sigma|\|w^R\|_{\alpha,T_0}|t - T_0|^{\alpha}.
\end{eqnarray*}
Since $b$ is strictly decreasing on $]0,\infty[$ by Assumption \ref{drift_assumption}.(2) :
\begin{eqnarray*}
 \int_{t}^{T_0}
 b^R(s,y_s)ds
 & \geqslant &
 \int_{t}^{T_0}
 b(e^{-Rs}y_s)ds\\
 & \geqslant &
 \int_{0}^{T_0 - t}b(\|w^R\|_{\alpha,T_0}s^{\alpha})ds.
\end{eqnarray*}
Therefore,
\begin{displaymath}
 \int_{0}^{T_0 - t}b(\|w^R\|_{\alpha,T_0}s^{\alpha})ds
 \leqslant
 |\sigma|\|w^R\|_{\alpha,T_0}(T_0 - t)^{\alpha}.
\end{displaymath}
However,
\begin{displaymath}
\int_{0}^{T_0 - t}b(\|w^R\|_{\alpha,T_0}s^{\alpha})ds =
\infty
\end{displaymath}
or
\begin{displaymath}
\lim_{t\rightarrow T_{0}^{-}}
\frac{1}{(T_0 - t)^{\alpha}}
\int_{0}^{T_0 - t}b(\|w^R\|_{\alpha,T_0}s^{\alpha})ds =
\infty
\end{displaymath}
by Assumption \ref{drift_assumption}.(4). That contradiction finishes the proof.
\end{proof}
%


%
\begin{proposition}\label{solution_estimate}
Under Assumption \ref{drift_assumption}, the solution $x$ of Equation (\ref{main_equation_deterministic}) \mbox{satisfies :}
\begin{displaymath}
\|x\|_{\infty,T}
\leqslant
x_0 + |b(x_0)|T +
2\sigma\|w\|_{\infty,T}
\end{displaymath}
for every $T > 0$.
\end{proposition}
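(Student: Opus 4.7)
The plan is to bound $x_t$ pointwise for each fixed $t \in [0,T]$ by splitting into two cases according to whether $x_t$ exceeds $x_0$. When it does not, the bound is trivial. When it does, we would exploit the strict monotonicity of $b$ (Assumption \ref{drift_assumption}.(2)) by comparing $b(x_u)$ to $b(x_0)$ on the last excursion of $x$ above $x_0$.

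Concretely, I would first fix $t \in [0,T]$, assume $x_t > x_0$ (otherwise the bound is immediate), and introduce
\begin{displaymath}
s^* := \sup\{s \in [0,t] : x_s \leqslant x_0\},
\end{displaymath}
which is well-defined since $s = 0$ belongs to that set. By continuity of $x$ (Proposition \ref{existence_solution}) we get $x_{s^*} = x_0$ and $x_u > x_0$ for all $u \in (s^*, t]$. Evaluating Equation (\ref{main_equation_deterministic}) between $s^*$ and $t$ then yields
\begin{displaymath}
x_t = x_0 + \int_{s^*}^{t} b(x_u)\, du + \sigma (w_t - w_{s^*}).
\end{displaymath}

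Next, since Assumption \ref{drift_assumption}.(2) forces $b$ to be strictly decreasing on $\mathbb{R}_+^*$, we have $b(x_u) \leqslant b(x_0)$ for every $u \in (s^*, t]$. Hence
\begin{displaymath}
\int_{s^*}^{t} b(x_u)\, du \leqslant \max(b(x_0),0)\,(t-s^*) \leqslant |b(x_0)|\, T,
\end{displaymath}
while $|\sigma(w_t - w_{s^*})| \leqslant 2|\sigma|\,\|w\|_{\infty,T}$ by the triangle inequality. Taking the supremum over $t \in [0,T]$ gives the announced estimate.

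There is no real obstacle here: the only point requiring attention is the fact that $b(x_0)$ is allowed to take either sign (nothing in Assumption \ref{drift_assumption} prevents $b$ from crossing zero), which is handled by the $\max(b(x_0),0) \leqslant |b(x_0)|$ step. The lower bound on $x$ used implicitly throughout (namely $x > 0$) is guaranteed by Proposition \ref{existence_solution}, so $b(x_u)$ is always well-defined on the interval of integration.
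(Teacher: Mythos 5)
Your proof is correct and follows essentially the same route as the paper: the same stopping point $s^* = \sup\{s\in[0,t] : x_s\leqslant x_0\}$ (denoted $T_{x_0}(t)$ there), the same case split on whether it equals $t$, and the same use of the strict monotonicity of $b$ from Assumption \ref{drift_assumption}.(2) to bound the drift integral by $|b(x_0)|T$. Your explicit handling of the sign of $b(x_0)$ via $\max(b(x_0),0)\leqslant |b(x_0)|$ is the same observation the paper makes implicitly in passing from $b(x_0)[t - T_{x_0}(t)]$ to $|b(x_0)|t$.
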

%


%
\begin{proof}
Let $T > 0$ and $t\in [0,T]$ be arbitrarily chosen, and put
\begin{displaymath}
T_{x_0}(t) :=
\sup\{s\in [0,t] : x_t\leqslant x_0\}.
\end{displaymath}
If $T_{x_0}(t) = t$, then $0 < x_t\leqslant x_0$. Assume that $T_{x_0}(t) < t$. Then,
\begin{displaymath}
x_t = x_0 +
\int_{T_{x_0}(t)}^{t}b(x_s)ds +
\sigma[w_t - w_{T_{x_0}(t)}].
\end{displaymath}
By Assumption \ref{drift_assumption}.(2) :
\begin{eqnarray*}
 \int_{T_{x_0}(t)}^{t}
 b(x_s)ds & \leqslant &
 b(x_0)[t - T_{x_0}(t)]\\
 & \leqslant &
 |b(x_0)|t.
\end{eqnarray*}
Therefore,
\begin{displaymath}
0 < x_t
\leqslant
x_0 + |b(x_0)|T + 2\sigma\|w\|_{\infty,T}.
\end{displaymath}
That finishes the proof.
\end{proof}
\noindent
\textbf{Notation.} In the sequel, the solution of Equation (\ref{main_equation_deterministic}) with the initial condition $x_0 > 0$ and the driving signal $w\in C^{\alpha}(\mathbb R_+,\mathbb R)$ is denoted by $x(x_0,w)$. For every $T > 0$, the restriction of the It\^o map $x(.)$ to $]0,\infty[\times C^{\alpha}([0,T],\mathbb R)$ is also denoted by $x(.)$. Then,
\begin{displaymath}
x(x_0,w)|_{[0,T]} =
x(x_0,w|_{[0,T]})
\end{displaymath}
for every $x_0,T > 0$ and $w\in C^{\alpha}(\mathbb R_+,\mathbb R)$.
%


%
\subsection{Regularity of the It\^o map}
The two following propositions deal with the regularity of the It\^o map $x(.)$.
%


%
\begin{proposition}\label{lipschitz_continuity_solution}
Under Assumption \ref{drift_assumption} :
\begin{displaymath}
\|x(x_{0}^{1},w^1) - x(x_{0}^{2},w^2)\|_{\infty,T}
\leqslant
|x_{0}^{1} - x_{0}^{2}| +
2\sigma\|w^1 - w^2\|_{\infty,T}
\end{displaymath}
for every $T > 0$, $x_{0}^{1},x_{0}^{2} > 0$ and $w^1,w^2\in C^{\alpha}([0,T],\mathbb R)$.
\end{proposition}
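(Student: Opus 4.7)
The plan is to exploit Assumption \ref{drift_assumption}.(2): since $\dot b < -K < 0$, the function $b$ is strictly decreasing on $\mathbb R_{+}^{*}$, so $b(x_{s}^{1}) - b(x_{s}^{2})$ always has the opposite sign to the difference $\Delta_s := x_{s}^{1} - x_{s}^{2}$, where $x^i := x(x_{0}^{i}, w^i)$. This monotonicity will let me discard the drift contribution over any time subinterval where $\Delta$ keeps one sign, leaving only the boundary and the noise increment.

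Fix $t\in [0,T]$ and assume, without loss of generality, that $\Delta_t\geqslant 0$; the case $\Delta_t < 0$ follows by exchanging the two inputs. I introduce the last time before $t$ at which $\Delta$ was non-positive,
\begin{displaymath}
\tau :=\sup\{s\in [0,t] :\Delta_s\leqslant 0\},
\end{displaymath}
with the convention $\sup\emptyset = 0$. Subtracting the integral equations for $x^1$ and $x^2$ between $\tau$ and $t$ gives
\begin{displaymath}
\Delta_t =\Delta_\tau +\int_{\tau}^{t}[b(x_{s}^{1}) - b(x_{s}^{2})]ds +\sigma[(w_{t}^{1} - w_{t}^{2}) - (w_{\tau}^{1} - w_{\tau}^{2})].
\end{displaymath}
By definition of $\tau$, $\Delta_s > 0$ on $(\tau,t]$, so the monotonicity of $b$ forces the integrand to be non-positive and the integral can simply be dropped. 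The noise increment is bounded by $2\sigma\|w^1 - w^2\|_{\infty,T}$.

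It then remains to control $\Delta_\tau$. Either $\tau > 0$, in which case a sequence from the left yields $\Delta_\tau\leqslant 0$ while $\Delta_s > 0$ on $(\tau,t]$ yields $\Delta_\tau\geqslant 0$, so $\Delta_\tau = 0$ by continuity; or $\tau = 0$, in which case $\Delta_\tau = x_{0}^{1} - x_{0}^{2}$. In both situations $\Delta_\tau\leqslant |x_{0}^{1} - x_{0}^{2}|$, and inserting this into the above gives the announced bound at $t$. Taking the supremum over $t\in [0,T]$, together with the symmetric argument for $\Delta_t < 0$, concludes. The only mildly delicate point is this boundary bookkeeping around $\tau$; no Gr\"onwall argument is required, the higher regularity of $b$ from Assumption \ref{drift_assumption}.(1) is not used, and the singularity of $b$ at $0^+$ never intervenes since Proposition \ref{existence_solution} ensures that both solutions remain strictly positive.
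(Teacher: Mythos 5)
Your proposal is correct and follows essentially the same route as the paper: both arguments use the strict monotonicity of $b$ (Assumption \ref{drift_assumption}.(2)) to discard the drift integral on the interval since the last sign change of $x^1 - x^2$, then bound the boundary value and the noise increment separately. The paper merely organizes this as two cases (before and after the first crossing time $T_{\textrm{c}}$, with the sharper constant $\sigma$ in the first case), whereas you unify them with a single last-exit time $\tau$; the bookkeeping at $\tau$ that you flag is handled correctly.
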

%


%
\begin{proof}
Consider $x_{0}^{1},x_{0}^{2} > 0$ and $w^1,w^2\in C^{\alpha}([0,T],\mathbb R)$ for $T > 0$ arbitrarily chosen.
\\
\\
Put $x^1 := x(x^1,w^1)$, $x^2 := x(x^2,w^2)$ and
\begin{displaymath}
T_{\textrm{c}} :=
\inf\{t\in [0,T] : x_{t}^{1} = x_{t}^{2}\}.
\end{displaymath}
Assume that $x_{0}^{1} > x_{0}^{2}$ without loss of generality. Since $x^1$ and $x^2$ are continuous on $\mathbb R_+$, $x_{s}^{1} > x_{s}^{2}$ for every $s\in [0,T_{\textrm{c}}]$. Since $b$ is strictly decreasing on $]0,\infty[$ by Assumption \ref{drift_assumption}.(2) :
\begin{displaymath}
b(x_{s}^{1}) - b(x_{s}^{2})
\leqslant 0
\end{displaymath}
for every $s\in [0,T_{\textrm{c}}]$. Then, for every $t\in [0,T_{\textrm{c}}]$,
\begin{eqnarray}
 |x_{t}^{1} - x_{t}^{2}| & = &
 x_{t}^{1} - x_{t}^{2}
 \nonumber\\
 & = &
 x_{0}^{1} - x_{0}^{2} +
 \int_{0}^{t}
 [b(x_{s}^{1}) - b(x_{s}^{2})]ds +
 \sigma(w_{t}^{1} - w_{t}^{2})
 \nonumber\\
 \label{lipschitz_continuity_estimate_1}
 & \leqslant &
 |x_{0}^{1} - x_{0}^{2}| +
 \sigma
 \|w^1 - w^2\|_{\infty,T}.
\end{eqnarray}
For $t\in [T_{\textrm{c}},T]$ arbitrarily chosen, put
\begin{displaymath}
T_{\textrm{c}}(t) :=
\sup\{s\in [T_{\textrm{c}},t] : x_{s}^{1} = x_{s}^{2}\}.
\end{displaymath}
Assume that $x_{t}^{1} > x_{t}^{2}$ without loss of generality. Since $x^1$ and $x^2$ are continuous on $\mathbb R_+$, $x_{s}^{1} > x_{s}^{2}$ for every $s\in [T_{\textrm{c}}(t),t]$. Since $b$ is strictly decreasing on $]0,\infty[$ by Assumption \ref{drift_assumption}.(2) :
\begin{displaymath}
b(x_{s}^{1}) - b(x_{s}^{2})
\leqslant 0
\end{displaymath}
for every $s\in [T_{\textrm{c}}(t),t]$. Then,
\begin{eqnarray}
 |x_{t}^{1} - x_{t}^{2}| & = &
 x_{t}^{1} - x_{t}^{2}
 \nonumber\\
 & = &
 \int_{T_{\textrm{c}}(t)}^{t}
 [b(x_{s}^{1}) - b(x_{s}^{2})]ds +
 \sigma(w_{t}^{1} - w_{t}^{2}) -
 \sigma[w_{T_{\textrm{c}}(t)}^{1} - w_{T_{\textrm{c}}(t)}^{2}]
 \nonumber\\
 \label{lipschitz_continuity_estimate_2}
 & \leqslant &
 2\sigma
 \|w^1 - w^2\|_{\infty,T}.
\end{eqnarray}
In conclusion, by inequalities (\ref{lipschitz_continuity_estimate_1}) and (\ref{lipschitz_continuity_estimate_2}) together :
\begin{displaymath}
\|x^1 - x^2\|_{\infty,T}
\leqslant
|x_{0}^{1} - x_{0}^{2}| +
2\sigma
\|w^1 - w^2\|_{\infty,T}.
\end{displaymath}
That finishes the proof.
\end{proof}
\noindent
\textbf{Remark.} By Proposition \ref{lipschitz_continuity_solution}, for every $T > 0$, the It\^o map $x(.)$ is Lipschitz continuous from
\begin{displaymath}
]0,\infty[\times C^{\alpha}([0,T],\mathbb R)
\textrm{ into }
C^0([0,T],]0,\infty[),
\end{displaymath}
where $C^{\alpha}([0,T],\mathbb R)$ is equipped with $\|.\|_{\infty,T}$ or $\|.\|_{\alpha,T}$.
%


%
\begin{proposition}\label{differentiability_Ito_map}
Under Assumption \ref{drift_assumption}, the It\^o map $x(.)$ is continuously differentiable from
\begin{displaymath}
]0,\infty[\times C^{\alpha}([0,T],\mathbb R)
\textrm{ into }
C^0([0,T],]0,\infty[)
\end{displaymath}
for every $T > 0$.
\end{proposition}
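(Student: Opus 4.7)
The plan is to prove Fréchet differentiability by identifying the candidate derivative through linearization of Equation (\ref{main_equation_deterministic}), and then to verify continuity of the derivative via the Lipschitz estimate of Proposition \ref{lipschitz_continuity_solution}. I would first localize: fix $(x_0,w)\in\mathbb R_{+}^{*}\times C^{\alpha}([0,T],\mathbb R)$, set $x := x(x_0,w)$, and note that by continuity of $x$ on $[0,T]$ with $x_t > 0$ for every $t$, there exist $0 < m < M$ with $x_t\in[m,M]$ for every $t\in[0,T]$. By Proposition \ref{lipschitz_continuity_solution}, any perturbation $(\widetilde x_0,\widetilde w)$ sufficiently close to $(x_0,w)$ in the product norm yields a solution $\widetilde x := x(\widetilde x_0,\widetilde w)$ with values in $[m/2,2M]$, a compact subset of $\mathbb R_{+}^{*}$ on which Assumption \ref{drift_assumption}.(1) guarantees bounded first and second derivatives of $b$ (recalling that $[1/\alpha]+1\geqslant 2$).

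Next I would define the candidate differential $D:\mathbb R\times C^{\alpha}([0,T],\mathbb R)\to C^0([0,T],\mathbb R)$ by $D(\delta_0,h) := \eta$, where $\eta$ is the unique solution of the linear integral equation
\[\eta_t =\delta_0 +\int_0^t\dot b(x_s)\eta_s\,ds +\sigma h_t.\]
Since $\dot b\circ x$ is continuous and bounded on $[0,T]$, $\eta$ is given by the variation of constants formula
\[\eta_t =\sigma h_t +\delta_0\Phi_t +\sigma\int_0^t\Phi_t\Phi_s^{-1}\dot b(x_s)h_s\,ds\]
with $\Phi_t :=\exp\bigl(\int_0^t\dot b(x_u)\,du\bigr)$, so $D$ is linear and bounded from the product space into $C^0([0,T],\mathbb R)$, regardless of whether the second factor is equipped with $\|.\|_{\infty,T}$ or $\|.\|_{\alpha,T}$.

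The key computation is then to show that the remainder $r_t :=\widetilde x_t - x_t - D(\widetilde x_0 - x_0,\widetilde w - w)_t$ is quadratic in the size of the perturbation. Subtracting the integral equations for $\widetilde x$ and $x$ and applying Taylor's formula to $b$ on $[m/2,2M]$ yields
\[r_t =\int_0^t\bigl[\dot b(x_s)r_s +\rho_s\bigr]ds,\qquad|\rho_s|\leqslant\tfrac{1}{2}\sup_{[m/2,2M]}|\ddot b|\cdot(\widetilde x_s - x_s)^2,\]
and Proposition \ref{lipschitz_continuity_solution} together with Gronwall's lemma gives
\[\|r\|_{\infty,T}\leqslant C\bigl(|\widetilde x_0 - x_0| +\|\widetilde w - w\|_{\infty,T}\bigr)^2,\]
which is Fréchet differentiability at $(x_0,w)$ with differential $D$. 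Continuity of the map $(x_0,w)\mapsto D_{(x_0,w)}$ then follows from Proposition \ref{lipschitz_continuity_solution}, the uniform continuity of $\dot b$ on $[m/2,2M]$, and the explicit dependence of $\Phi$ and of the kernel on $\dot b\circ x$: a small perturbation of $(x_0,w)$ induces a uniform perturbation of $x$ on $[0,T]$, hence of $\dot b(x_{\cdot})$, and hence of $D_{(x_0,w)}$ in operator norm.

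The main obstacle in this plan is the localization step, where one must ensure that perturbed solutions remain in a fixed compact subset of $\mathbb R_{+}^{*}$ in order to exploit the boundedness of $\dot b$ and $\ddot b$ away from zero; this is precisely where Proposition \ref{lipschitz_continuity_solution} is essential. Everything else reduces to standard linear ODE estimates and Taylor expansion on a compact set.
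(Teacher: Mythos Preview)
Your proof is correct and takes a genuinely different route from the paper's. Both arguments share the same first step: localize via Proposition \ref{lipschitz_continuity_solution} to trap all nearby solutions in a fixed compact interval $[m_0,M_0]\subset\mathbb R_{+}^{*}$, on which $b$ has bounded derivatives by Assumption \ref{drift_assumption}.(1). From there the approaches diverge. The paper simply observes that on this localized region the drift is $C^{[1/\alpha]+1}$ with bounded derivatives and invokes the rough-path differentiability theorems of Friz and Victoir (\cite{FV10}, Theorems 11.3 and 11.6) as a black box. You instead exploit the additive structure of the noise: Equation (\ref{main_equation_deterministic}) is effectively an ODE with H\"older forcing, so you can write down the linearized equation, solve it by variation of constants, and control the remainder by a second-order Taylor expansion of $b$ combined with Gronwall and the Lipschitz bound from Proposition \ref{lipschitz_continuity_solution}.

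Your argument is more elementary and entirely self-contained---it never needs the rough-path machinery, which is overkill here since $d=1$ and the noise is additive. The paper's route is shorter on the page and would generalize immediately to multiplicative or multidimensional noise, but at the cost of importing a substantial external theory. Both yield the same derivative, and indeed the explicit formula you obtain for $\eta$ matches the expression the paper records in the remark following the proposition.
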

%


%
\begin{proof}
Consider $(x_0,w)\in\textrm E := ]0,\infty[\times C^{\alpha}([0,T],\mathbb R)$ for $T > 0$ arbitrarily chosen,
\begin{displaymath}
m_0\in\left]
0,\min_{t\in [0,T]}x_t(x_0,w)\right[
\textrm{ and }
\varepsilon_0 :=
-m_0 +\min_{t\in [0,T]}x_t(x_0,w).
\end{displaymath}
Since $x(.)$ is continuous from $\textrm E$ into $C^0([0,T],]0,\infty[)$ by Proposition \ref{lipschitz_continuity_solution} :
\begin{eqnarray}
 \forall\varepsilon\in ]0,\varepsilon_0]
 \textrm{, }
 \exists\eta & > & 0 :
 \forall (\xi,h)\in\textrm E
 \textrm{, }
 \nonumber\\
 (\xi,h) & \in &
 B_{\textrm E}((x_0,w),\eta)
 \Longrightarrow
 \|x(\xi,h) -
 x(x_0,w)\|_{\infty,T} <
 \varepsilon\leqslant\varepsilon_0.
\end{eqnarray}
In particular, for every $(\xi,h)\in B_{\textrm E}((x_0,w),\eta)$, the function $x(\xi,h)$ is $[m_0,M_0]$-valued with $[m_0,M_0]\subset ]0,\infty[$ and
\begin{displaymath}
M_0 :=
-m_0 +\min_{t\in [0,T]}x_t(x_0,w) +
\max_{t\in [0,T]}x_t(x_0,w).
\end{displaymath}
Then, since the function $b$ is $[1/\alpha] + 1$ times continuously differentiable on $]0,\infty[$ and has bounded derivatives on $[m_0,M_0]$ by Assumption \ref{drift_assumption}.(1) ; $x(.)$ is continuously differentiable from $B_{\textrm E}((x_0,w),\eta)$ into $C^0([0,T],]0,\infty[)$ by Friz and Victoir \cite{FV10}, theorems 11.3 and 11.6.
\\
\\
That finishes the proof, because $(x_0,w)$ has been arbitrarily chosen.
\end{proof}
\noindent
\textbf{Remarks :}
\begin{enumerate}
 \item In order to derive the It\^o map with respect to the driving signal at point $w$ in the direction $h\in C^{\beta}([0,T],\mathbb R^d)$, $\beta\in ]0,1[$ has to satisfy the condition $\alpha +\beta > 1$ to ensure the existence of the geometric $1/\alpha$-rough path over $w +\varepsilon h$ ($\varepsilon > 0$) provided at Friz and Victoir \cite{FV10}, Theorem 9.34 when $d > 1$. That condition can be avoided when $d = 1$, because the canonical geometric $1/\alpha$-rough path over $w +\varepsilon h$ is
 \begin{displaymath}
 t\in [0,T]
 \longmapsto
 \left(1,w_t +\varepsilon h_t,\dots,
 \frac{(w_t +\varepsilon h_t)^{[1/\alpha]}}{[1/\alpha]!}\right).
 \end{displaymath}
 \item The first order directional derivative of $x(.)$ at point $(x_0,w)\in\textrm E$ in the direction $(\xi,h)\in\textrm E$ is denoted by $\textrm D_{(\xi,h)}x_.(x_0,w)$ and
 \begin{displaymath}
 \textrm D_{(\xi,h)}x_t(x_0,w) =
 \xi +
 \int_{0}^{t}\dot b[x_s(x_0,w)]
 \textrm D_{(\xi,h)}x_s(x_0,w)ds +
 \sigma h_t
 \end{displaymath}
 for every $t\in [0,T]$. Then,
 \begin{displaymath}
 \textrm D_{(\xi,h)}x_.(x_0,w) =
 \int_{0}^{.}(\xi +\sigma h_s)\exp\left[\int_{s}^{.}\dot b[x_u(x_0,w)]du\right]ds.
 \end{displaymath}
 So, by Assumption \ref{drift_assumption}.(2) :
 \begin{displaymath}
 |\textrm D_{(\xi,h)}x_t(x_0,w)|\leqslant
 T(\xi +\sigma\|h\|_{\infty,T})
 \end{displaymath}
 for every $t\in [0,T]$.
\end{enumerate}
The end of the subsection is devoted to three consequences of propositions \ref{lipschitz_continuity_solution} and \ref{differentiability_Ito_map} on the partial It\^o map $x(.,w)$ for $w\in C^{\alpha}(\mathbb R_+,\mathbb R)$ arbitrarily fixed.
%


%
\begin{corollary}\label{monotonicity_Ito_map}
Under Assumption \ref{drift_assumption}, $x_t(.,w)$ is (strictly) increasing on $]0,\infty[$ for every $t > 0$.
\end{corollary}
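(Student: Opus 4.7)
The plan is to fix $w \in C^{\alpha}(\mathbb{R}_+,\mathbb{R})$, pick $0 < x_0^1 < x_0^2$, set $x^i := x(x_0^i,w)$, and derive an explicit expression for the difference $\Delta_t := x_t^2 - x_t^1$. By Proposition \ref{existence_solution}, both $x^1$ and $x^2$ are $\mathbb{R}_+^*$-valued and continuous on $\mathbb{R}_+$, so subtracting the two integral equations yields
$$\Delta_t = (x_0^2 - x_0^1) + \int_0^t [b(x_s^2) - b(x_s^1)]\,ds.$$

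Since $b$ is continuously differentiable on $\mathbb{R}_+^*$ by Assumption \ref{drift_assumption}.(1), the fundamental theorem of calculus applied along the segment from $x_s^1$ to $x_s^2$ gives $b(x_s^2) - b(x_s^1) = \beta(s)\,\Delta_s$ with
$$\beta(s) := \int_0^1 \dot{b}\bigl((1-u)x_s^1 + u x_s^2\bigr)\,du.$$
Each convex combination $(1-u)x_s^1 + u x_s^2$ lies in $\mathbb{R}_+^*$, so $\beta$ is well-defined and continuous in $s$; moreover, Assumption \ref{drift_assumption}.(2) yields $\beta(s) < -K < 0$ for every $s \geqslant 0$.

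Therefore $\Delta$ solves the linear Volterra equation $\Delta_t = \Delta_0 + \int_0^t \beta(s)\Delta_s\,ds$, whose unique solution is
$$\Delta_t = (x_0^2 - x_0^1)\exp\left(\int_0^t \beta(s)\,ds\right) > 0$$
for every $t \geqslant 0$. This gives the announced strict monotonicity (and, incidentally, exponentially fast contraction of the difference, consistent with Proposition \ref{lipschitz_continuity_solution}).

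There is no real obstacle: the key points are that Proposition \ref{existence_solution} keeps both trajectories in the interior of $\mathbb{R}_+^*$ where $\dot b$ is defined, and that the strict negativity of $\dot b$ in Assumption \ref{drift_assumption}.(2) turns the mean-value linearization into a scalar linear ODE whose solution retains its sign.
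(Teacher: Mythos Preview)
Your proof is correct, but it takes a genuinely different route from the paper's. The paper argues by differentiating the It\^o map: it invokes Proposition~\ref{differentiability_Ito_map} (whose proof rests on the Friz--Victoir regularity theory) to obtain an explicit positive expression for $\partial_{x_0}x_t(x_0,w)$, and concludes. You instead work directly with the difference $\Delta_t = x_t^2 - x_t^1$ of two solutions, linearize via the integral mean-value formula $b(x_s^2)-b(x_s^1)=\beta(s)\Delta_s$, and solve the resulting scalar ODE to get $\Delta_t = \Delta_0\exp\bigl(\int_0^t\beta(s)\,ds\bigr)>0$. This is more elementary: it only needs Proposition~\ref{existence_solution} (to keep both trajectories in $\mathbb R_{+}^{*}$ so that $\dot b$ is defined along the segment) and basic linear ODE theory, and it avoids the differentiability machinery entirely. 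As you note, it also delivers the exponential contraction $|\Delta_t|\leqslant|\Delta_0|e^{-Kt}$ for free, which the paper establishes separately in Corollary~\ref{Lipschitz_continuity_IC} --- and in fact the paper's proof of that corollary \emph{uses} Corollary~\ref{monotonicity_Ito_map} to rule out crossings, whereas your argument handles both statements in one stroke. The only (very minor) point worth making explicit is that $\beta$ is locally bounded below on each $[0,T]$, since $x^1,x^2$ range in a compact subset of $\mathbb R_{+}^{*}$ there; this ensures $\int_0^t\beta(s)\,ds$ is finite so that the exponential formula is meaningful.
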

%


%
\begin{proof}
By Proposition \ref{differentiability_Ito_map} :
\begin{eqnarray*}
 \frac{\partial}{\partial x_0}x_t(x_0,w) & = &
 \textrm D_{(1,0)}x_t(x_0,w)\\
 & = &
 \int_{0}^{t}\exp\left[\int_{s}^{t}\dot b[x_u(x_0,w)]du\right]ds > 0
\end{eqnarray*}
for every $t > 0$. That finishes the proof.
\end{proof}
%


%
\begin{corollary}\label{extension_Ito_map}
Under Assumption \ref{drift_assumption}, there exists $x(0,w)\in C^{\alpha}(\mathbb R_+,\mathbb R_+)$ such that $x_t(0,w) > 0$ for every $t > 0$, and
\begin{displaymath}
\lim_{x_0\rightarrow 0}
\|x(x_0,w) - x(0,w)\|_{\infty,T} = 0
\textrm{ $;$ }
\forall T > 0.
\end{displaymath}
\end{corollary}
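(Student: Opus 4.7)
The plan is to define $x(0, w)$ as the uniform-on-compacts limit of $(x(x_0, w))_{x_0 > 0}$ as $x_0 \to 0^+$, and then to verify in turn the uniform convergence, the strict positivity for $t > 0$, and the local $\alpha$-H\"older regularity.

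First, applying Proposition~\ref{lipschitz_continuity_solution} with $w^1 = w^2 = w$ gives $\|x(x_0^1, w) - x(x_0^2, w)\|_{\infty, T} \leq |x_0^1 - x_0^2|$, so $(x(x_0, w))_{x_0 > 0}$ is a Cauchy net in the Banach space $C^0([0, T], \mathbb R)$ as $x_0 \to 0^+$. Its limit $x(0, w)|_{[0, T]}$ is therefore continuous and $\mathbb R_+$-valued (as the uniform limit of strictly positive continuous functions), and satisfies the claimed uniform convergence; a diagonal argument in $T$ extends it to an element of $C^0(\mathbb R_+, \mathbb R_+)$.

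The heart of the proof is the strict positivity on $\,]0, \infty[\,$. I would reuse the transformation $y = e^{R \cdot} x$ from the proof of Proposition~\ref{existence_solution}, so that $y(x_0, w)$ solves $y_t(x_0, w) = x_0 + \int_0^t b^R(s, y_s(x_0, w))\,ds + \sigma w_t^R$ with $b^R \geq 0$ and $b^R(s, \cdot)$ decreasing. By Corollary~\ref{monotonicity_Ito_map}, $y_s(x_0, w)$ decreases to $y_s(0, w) := e^{Rs} x_s(0, w)$ as $x_0 \to 0^+$, so $b^R(s, y_s(x_0, w))$ increases to $b^R(s, y_s(0, w))$. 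Since $\int_0^t b^R(s, y_s(x_0, w))\,ds = y_t(x_0, w) - x_0 - \sigma w_t^R$ tends to the finite value $y_t(0, w) - \sigma w_t^R$, monotone convergence forces $\int_0^t b^R(s, y_s(0, w))\,ds$ to be finite and to equal that limit, so $y(0, w)$ itself solves the same integral equation. Assuming for contradiction that $y_{t^*}(0, w) = 0$ for some $t^* > 0$, the contradiction scheme of Proposition~\ref{existence_solution} transports verbatim to $y(0, w)$ on $[0, t^*]$ with $T_0 := t^*$: positivity of $b^R$ gives $y_s(0, w) \leq |\sigma|\,\|w^R\|_{\alpha, t^*}(t^* - s)^\alpha$, and the monotonicity of $b$ combined with Assumption~\ref{drift_assumption}.(4) yields $\int_0^{t^* - t} b(\|w^R\|_{\alpha, t^*} s^\alpha)\,ds \leq |\sigma|\,\|w^R\|_{\alpha, t^*}(t^* - t)^\alpha$, a contradiction as $t \to t^{*-}$.

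For the local $\alpha$-H\"older regularity, fix $0 < \delta < T$; the strict positivity just established and continuity give $m_\delta := \min_{[\delta, T]} x(0, w) > 0$, and Assumption~\ref{drift_assumption}.(1) makes $b$ bounded on $[m_\delta, \|x(0, w)\|_{\infty, T}]$. Passing to the limit $x_0 \to 0$ in the integral equation on $[\delta, T]$ and combining with the contribution $|\sigma|\,\|w\|_{\alpha, T}\,|t - s|^\alpha$ from the signal then yields $|x_t(0, w) - x_s(0, w)| \leq C_{\delta, T}|t - s|^\alpha$, giving the locally $\alpha$-H\"older estimate on $\mathbb R_+^*$; the H\"older control near the origin is obtained by feeding the analog of the bound $y_s \leq \|w^R\|_{\alpha, T} s^\alpha$ (inherited from the existence-proof estimate now applied at the starting time $0$) back into the equation. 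I expect the main obstacle to be the monotone-convergence passage to the limit in the integral equation for $y(0, w)$, since one must rule out a priori that $\int_0^t b^R(s, y_s(0, w))\,ds = +\infty$, and only once this finiteness is secured does the existence-proof contradiction scheme transport to a solution whose initial value is exactly zero.
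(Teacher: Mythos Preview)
Your Step 1 coincides with the paper's. Your Step 2 (positivity) takes a genuinely different route, and there is one concrete gap in it.

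\textbf{The gap.} You assert that $b^R(s,\cdot)$ is decreasing in order to apply monotone convergence. But $\partial_u b^R(s,u)=R+\dot b(e^{-Rs}u)$, and Assumption~\ref{drift_assumption} only gives $\dot b<-K$; nothing forces $K\geqslant R$. In fact, integrating $\dot b<-K$ shows $\limsup_{x\to\infty}b(x)/x\leqslant -K$, while Assumption~\ref{drift_assumption}.(3) gives $b(x)/x>-R$, so necessarily $R\geqslant K$ and the bound $\partial_u b^R<R-K$ is useless. Concretely, for $b(x)=x^{-\gamma}-x+\varepsilon\sin x$ with small $\varepsilon>0$ one checks that $b^R(s,\cdot)$ fails to be monotone. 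The fix is immediate: run monotone convergence on $b$ itself rather than $b^R$. By Corollary~\ref{monotonicity_Ito_map}, $x_s(x_0,w)\downarrow x_s(0,w)$ as $x_0\downarrow 0$; since $b$ is strictly decreasing, $b(x_s(x_0,w))$ increases, and it is bounded below by $b(\|x(1,w)\|_{\infty,T})$ for $x_0\leqslant 1$. Monotone convergence then gives finiteness of $\int_0^t b(x_s(0,w))\,ds$, hence the zero set of $x(0,w)$ has Lebesgue measure zero and $x(0,w)$ solves the equation. Your transport of the Proposition~\ref{existence_solution} contradiction is then legitimate, since that argument only uses $y_{T_0}=0$, $y\geqslant 0$, and the equation (the evaluation of $b(e^{-Rs}y_s)$ is needed only a.e., which the measure-zero observation provides).

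\textbf{Comparison with the paper.} The paper does not pass to the limit in the integral equation globally. Instead it first shows, by a short blow-up argument, that $x(0,w)$ cannot vanish on any nondegenerate interval; this yields a $\tau>0$ with $x_t(0,w)>0$ on $]0,\tau]$. It then uses dominated convergence \emph{away from the singularity} (on $[\tau,\tau_0[$, where $b$ is bounded on the range) to show $x_{\tau+\cdot}(0,w)=x[x_\tau(0,w),w^\tau]$, and invokes Proposition~\ref{existence_solution} with the positive initial condition $x_\tau(0,w)$ to get $\tau_0=\infty$. Your approach is more direct: one global monotone-convergence step replaces the paper's ``find a good starting time and restart'' manoeuvre, and it yields for free that the zero set is Lebesgue-null. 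The paper's route, on the other hand, never needs to interpret the integral equation at points where the solution might vanish.

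\textbf{On the H\"older regularity.} The paper's proof does not address the $C^\alpha$-claim at all. Your argument on $[\delta,T]$ is fine. Your sketch near $t=0$ is not convincing: the bound $y_s\leqslant \|w^R\|_{\alpha,T_0}(T_0-s)^\alpha$ in Proposition~\ref{existence_solution} comes from $y_{T_0}=0$ and $b^R\geqslant 0$, but flipping to the origin gives $y_s\geqslant \sigma w_s^R$, not an upper bound. In the noiseless case $b(x)=x^{-\gamma}-x$ with $1-\alpha<\alpha\gamma$, the solution from $0$ behaves like $t^{1/(\gamma+1)}$ with $1/(\gamma+1)<\alpha$, so $\alpha$-H\"older continuity at $0$ is in fact delicate; you should not expect a one-line argument there.
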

%


%
\begin{proof}
The existence of the limit $x(0,w)$ of $x(.,w)$ in $C^0(\mathbb R_+,\mathbb R_+)$ when the initial condition $x_0$ goes down to 0 is proved in a first step. At the second step, it is shown that $x_t(0,w) > 0$ for every $t > 0$.
\\
\\
\textbf{Step 1.} Consider a strictly positive real sequence $(x_{0}^{n})_{n\in\mathbb N}$ such that :
\begin{displaymath}
\lim_{n\rightarrow\infty}
x_{0}^{n} = 0.
\end{displaymath}
Let $T > 0$ be arbitrarily chosen. By Proposition \ref{lipschitz_continuity_solution} :
\begin{displaymath}
\|x(x_{0}^{n},w) - x(x_{0}^{m},w)\|_{\infty,T}
\leqslant
|x_{0}^{n} - x_{0}^{m}|
\textrm{ ; }
\forall m,n\in\mathbb N.
\end{displaymath}
Then, since $C^0([0,T],\mathbb R)$ is a Banach space, $x(x_{0}^{n},w)|_{[0,T]}$ converges in $C^0([0,T],\mathbb R_+)$ when $n$ goes to infinity. Since the strictly positive real sequence $(x_{0}^{n})_{n\in\mathbb N}$ has been arbitrarily chosen, there exists a function $x(0,w|_{[0,T]})$ belonging to $C^0([0,T],\mathbb R_+)$ such that :
\begin{displaymath}
\lim_{x_0\rightarrow 0}
\|x(x_0,w) - x(0,w|_{[0,T]})\|_{\infty,T} = 0.
\end{displaymath}
Consider the function $x(0,w) :\mathbb R_+\rightarrow\mathbb R_+$ such that :
\begin{displaymath}
x(0,w)|_{[0,T]} :=
x(0,w|_{[0,T]})
\end{displaymath}
for every $T > 0$. By construction, $x(0,w)$ is the limit of $x(.,w)$ in $C^0(\mathbb R_+,\mathbb R_+)$ when the initial condition $x_0$ goes down to 0.
\\
\\
\textbf{Step 2.} For $t > s\geqslant 0$ and $x_0 > 0$ arbitrarily chosen :
\begin{eqnarray*}
 x_t(x_0,w) - x_s(x_0,w) -\sigma(w_t - w_s) & = &
 \int_{s}^{t}b[x_u(x_0,w)]du\\
 & \geqslant &
 (t - s)b\left[\sup_{u\in [s,t]}x_u(x_0,w)\right]
\end{eqnarray*}
by Assumption \ref{drift_assumption}.(2). Assume that $x_u(0,w) = 0$ for every $u\in [s,t]$. Then,
\begin{eqnarray*}
 \lim_{x_0\rightarrow 0}
 x_t(x_0,w) - x_s(x_0,w) -\sigma(w_t - w_s) & \geqslant &
 \lim_{x_0\rightarrow 0}
 (t - s)b\left[\sup_{u\in [s,t]}x_u(x_0,w)\right]\\
 & = & \infty
\end{eqnarray*}
by Assumption \ref{drift_assumption}.(4). However, by the first step of the proof :
\begin{eqnarray*}
 \lim_{x_0\rightarrow 0}
 x_t(x_0,w) - x_s(x_0,w) -\sigma(w_t - w_s) & = &
 x_t(0,w) - x_s(0,w) -\sigma(w_t - w_s)\\
 & < & \infty.
\end{eqnarray*}
Therefore, for every $s > t\geqslant 0$, there exists $u\in [s,t]$ such that $x_u(0,w) > 0$.
\\
\\
In particular, there exists a strictly positive real sequence $(t_n)_{n\in\mathbb N}$ such that $t_n\downarrow 0$ when $n\rightarrow\infty$, and
\begin{displaymath}
x_{t_n}(0,w) > 0
\textrm{ $;$ }
\forall n\in\mathbb N.
\end{displaymath}
Let $n\in\mathbb N$ be arbitrarily chosen. Since $x(0,w)$ is continuous on $\mathbb R_+$ by construction, $x_t(0,w) > 0$ for every $t\in [t_n,\tau_0(t_n)[$, where
\begin{displaymath}
\tau_0(t_n) :=
\inf\{t > t_n : x_t(0,w) = 0\}.
\end{displaymath}
For every $t\in [0,\tau_0(t_n) - t_n[$, consider
\begin{displaymath}
\tau_{\min}(n,t) :=
\textrm{argmin}_{s\in [t_n,t_n + t]}
x_s(0,w).
\end{displaymath}
Let $t\in [0,\tau_0(t_n) - t_n[$ be arbitrarily chosen. By Assumption \ref{drift_assumption}.(2) and Corollary \ref{monotonicity_Ito_map} :
\begin{displaymath}
b[x_s(x_0,w)]\leqslant
b[x_s(0,w)]\leqslant
b[x_{\tau_{\min}(n,t)}(0,w)] <\infty
\end{displaymath}
for every $s\in [t_n,t_n + t]$ and $x_0 > 0$. Then, by Lebesgue's theorem :
\begin{eqnarray*}
 x_{t_n + t}(0,w) & = &
 x_{t_n}(0,w) +
 \lim_{x_0\rightarrow 0}
 \int_{t_n}^{t_n + t}b[x_s(x_0,w)]ds +
 \sigma(w_{t_n + t} - w_{t_n})\\
 & = &
 x_{t_n}(0,w) +
 \int_{0}^{t}b[x_{t_n + s}(0,w)]ds +
 \sigma w_{t}^{t_n}
\end{eqnarray*}
with $w^{t_n} := w_{t_n + .} - w_{t_n}$ on $\mathbb R_+$. Therefore, $x_{t_n + .}(0,w) = x[x_{t_n}(0,w),w^{t_n}]$ on $[0,\tau_0(t_n)- t_n[$. Since $x_{t_n}(0,w) > 0$ and $w^{t_n}$ belongs to $C^{\alpha}(\mathbb R_+,\mathbb R)$, by Proposition \ref{existence_solution} :
\begin{eqnarray*}
 \tau_0(t_n) & = & \inf\{t > 0 :
 x_{t_n + t}(0,w) = 0\}\\
 & = & \inf\{t > 0 :
 x_t[x_{t_n}(0,w),w^{t_n}] = 0\}\\
 & = & \infty.
\end{eqnarray*}
So, $x(0,w)$ is a $]0,\infty[$-valued function on $[t_n,\infty[$ for every $n\in\mathbb N$. Since $t_n\downarrow 0$ when $n\rightarrow\infty$, $x(0,w)$ is a $]0,\infty[$-valued function on $]0,\infty[$.
\end{proof}
%


%
\begin{corollary}\label{Lipschitz_continuity_IC}
Under Assumption \ref{drift_assumption} :
\begin{displaymath}
|x_t(x_{0}^{1},w) - x_t(x_{0}^{2},w)|
\leqslant
|x_{0}^{1} - x_{0}^{2}|e^{-Kt}
\end{displaymath}
for every $x_{0}^{1},x_{0}^{2},t\in\mathbb R_+$.
\end{corollary}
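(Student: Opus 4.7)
The plan is to reduce to the case of two strictly positive initial conditions, show that the difference of the two solutions is strictly positive and $C^1$, obtain a differential inequality with rate $-K$ from the mean value theorem and Assumption \ref{drift_assumption}.(2), and then conclude via an integrating factor argument. The boundary case where an initial condition equals $0$ will be recovered by the continuity extension provided by Corollary \ref{extension_Ito_map}.

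First I would assume without loss of generality that $x_0^1 > x_0^2 > 0$ and write $x^i := x(x_0^i,w)$, $\Delta_t := x_t^1 - x_t^2$. By Corollary \ref{monotonicity_Ito_map}, $x_t(\cdot,w)$ is strictly increasing on $\mathbb{R}_{+}^{*}$ for every $t > 0$, so $\Delta_t > 0$ on $\mathbb{R}_+$. Since $b$ is continuous on $\mathbb{R}_{+}^{*}$, both $x^1$ and $x^2$ are $C^1$ on $\mathbb{R}_+$, and the driving signal $\sigma w$ cancels in the difference; hence $\Delta$ is $C^1$ and
\begin{displaymath}
\Delta_t' = b(x_t^1) - b(x_t^2).
\end{displaymath}

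By the mean value theorem there exists $\xi_t \in ]x_t^2,x_t^1[ \subset \mathbb{R}_{+}^{*}$ such that $b(x_t^1) - b(x_t^2) = \dot b(\xi_t)\Delta_t$, and Assumption \ref{drift_assumption}.(2) yields $\dot b(\xi_t) < -K$. Therefore $\Delta_t' \leqslant -K\Delta_t$, so $(e^{Kt}\Delta_t)' \leqslant 0$, which gives $\Delta_t \leqslant \Delta_0 e^{-Kt}$, i.e. the announced estimate for strictly positive initial conditions.

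Finally, to cover the boundary case where $x_0^1 = 0$ or $x_0^2 = 0$, I would let $x_0 \to 0$ in the estimate $|x_t(x_0 + \varepsilon,w) - x_t(x_0,w)| \leqslant \varepsilon\, e^{-Kt}$ for $x_0, \varepsilon > 0$ and invoke Corollary \ref{extension_Ito_map}, which guarantees the uniform convergence $x(x_0,w) \to x(0,w)$ on $[0,T]$ as $x_0 \to 0^+$. The only mild technical point is to be sure that the monotonicity used in step one indeed implies $\Delta_t > 0$ on all of $\mathbb{R}_+$ (not only $t > 0$), which follows from continuity of $\Delta$ together with $\Delta_0 > 0$; everything else is routine.
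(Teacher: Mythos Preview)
Your proof is correct and follows essentially the same route as the paper: both arguments use Corollary \ref{monotonicity_Ito_map} to control the sign of the difference, apply the mean value theorem together with Assumption \ref{drift_assumption}.(2) to obtain a differential inequality with rate $-K$, and integrate. The only cosmetic difference is that the paper squares the difference, deriving $\partial_t(x_t^1-x_t^2)^2\leqslant -2K(x_t^1-x_t^2)^2$, whereas you work directly with $\Delta_t>0$ and the integrating factor $e^{Kt}$; your handling of the boundary case $x_0^i=0$ via Corollary \ref{extension_Ito_map} is also more explicit than the paper's one-line remark.
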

%


%
\begin{proof}
Put $x^1 := x(x_{0}^{1},w)$ and $x^2 := x(x_{0}^{2},w)$ for $x_{0}^{1},x_{0}^{2} > 0$ such that $x_{0}^{1}\not= x_{0}^{2}$. By Proposition \ref{monotonicity_Ito_map}, $x_{t}^{1}\not= x_{t}^{2}$ for every $t\in\mathbb R_+$.
\\
\\
The function $x^1 - x^2$ satisfies :
\begin{equation}\label{equation_difference_IC}
x_{t}^{1} - x_{t}^{2} =
x_{0}^{1} - x_{0}^{2} +
\int_{0}^{t}[b(x_{s}^{1}) - b(x_{s}^{2})]ds
\textrm{ ; }
\forall t\in\mathbb R_+.
\end{equation}
Let $t\in\mathbb R_+$ be arbitrarily chosen. By Equation (\ref{equation_difference_IC}) :
\begin{eqnarray*}
 (x_{t}^{1} - x_{t}^{2})^2 & = &
 (x_{0}^{1} - x_{0}^{2})^2 +
 2\int_{0}^{t}(x_{s}^{1} - x_{s}^{2})d(x^1 - x^2)_s\\
 & = &
 (x_{0}^{1} - x_{0}^{2})^2 +
 2\int_{0}^{t}(x_{s}^{1} - x_{s}^{2})[b(x_{s}^{1}) - b(x_{s}^{2})]ds.
\end{eqnarray*}
Then,
\begin{equation}\label{Lipschitz_continuity_IC_equality}
\frac{\partial}{\partial t}(x_{t}^{1} - x_{t}^{2})^2 =
2(x_{t}^{1} - x_{t}^{2})^2\frac{b(x_{t}^{1}) - b(x_{t}^{2})}{x_{t}^{1} - x_{t}^{2}}.
\end{equation}
By Assumption \ref{drift_assumption}.(2) :
\begin{displaymath}
\forall u > 0\textrm{, }
\dot b(u) < -K.
\end{displaymath}
Then, by the mean-value theorem, there exists $c_t\in ]x_{t}^{1}\wedge x_{t}^{2},x_{t}^{1}\vee x_{t}^{2}[$ such that :
\begin{displaymath}
\frac{b(x_{t}^{1}) - b(x_{t}^{2})}{x_{t}^{1} - x_{t}^{2}} =
\dot b(c_t) < -K.
\end{displaymath}
Therefore, by Equation (\ref{Lipschitz_continuity_IC_equality}) :
\begin{displaymath}
\frac{\partial}{\partial t}(x_{t}^{1} - x_{t}^{2})^2
\leqslant -2K(x_{t}^{1} - x_{t}^{2})^2.
\end{displaymath}
In conclusion,
\begin{equation}\label{Lipschitz_continuity_IC_conclusion}
|x_{t}^{1} - x_{t}^{2}|
\leqslant
|x_{0}^{1} - x_{0}^{2}|e^{-Kt}.
\end{equation}
If $x_{0}^{1} = 0$, $x_{0}^{2} = 0$ or $x_{0}^{1} = x_{0}^{2}$, Inequality (\ref{Lipschitz_continuity_IC_conclusion}) holds true.
\end{proof}
%


%
\subsection{Existence, uniqueness and convergence of the implicit Euler scheme}
Let $T > 0$ and $n\in\mathbb N^*$ be arbitrarily fixed, and consider a dissection $(t_{0}^{n},t_{1}^{n}\dots,t_{n}^{n})$ of $[0,T]$.
\\
\\
The subsection deals with the global existence, the uniqueness, an estimate and the convergence of the implicit Euler scheme associated to Equation (\ref{main_equation_deterministic}) and to the dissection $(t_{0}^{n},t_{1}^{n},\dots,t_{n}^{n})$ :
\begin{equation}\label{Euler_scheme}
x_{k + 1}^{n} =
x_{k}^{n} + b(x_{k + 1}^{n})(t_{k + 1}^{n} - t_{k}^{n}) +
\sigma(w_{t_{k + 1}^{n}} - w_{t_{k}^{n}})
\end{equation}
with $x_{0}^{n} := x_0 > 0$.
%


%
\begin{proposition}\label{existence_Euler_scheme}
Under Assumption \ref{drift_assumption}, Equation (\ref{Euler_scheme}) has a unique $]0,\infty[$-valued solution on $\{0,\dots,n\}$.
\end{proposition}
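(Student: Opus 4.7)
The plan is to proceed by induction on $k$, at each step solving a scalar nonlinear equation in one unknown. Given $x_k^n > 0$, set $h_k := t_{k+1}^n - t_k^n > 0$ and $C_k := x_k^n + \sigma(w_{t_{k+1}^n} - w_{t_k^n}) \in \mathbb{R}$. Equation (\ref{Euler_scheme}) for $x_{k+1}^n$ then reads $\varphi_k(x_{k+1}^n) = C_k$, where
\begin{displaymath}
\varphi_k(y) := y - h_k b(y), \qquad y \in \mathbb{R}_+^*.
\end{displaymath}
The goal is to show that $\varphi_k$ is a homeomorphism from $\mathbb{R}_+^*$ onto $\mathbb{R}$, which immediately yields a unique $\mathbb{R}_+^*$-valued $x_{k+1}^n$ for every $C_k \in \mathbb{R}$.

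First I would check strict monotonicity: by Assumption \ref{drift_assumption}.(1), $\varphi_k$ is continuously differentiable on $\mathbb{R}_+^*$, and by Assumption \ref{drift_assumption}.(2),
\begin{displaymath}
\dot\varphi_k(y) = 1 - h_k \dot b(y) > 1 + h_k K > 1 \qquad \forall y > 0.
\end{displaymath}
Hence $\varphi_k$ is a $C^1$ diffeomorphism from $\mathbb{R}_+^*$ onto its image.

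Next I would identify that image by computing the two limits. As $y \to 0^+$, the hypothesis $b(y) \to +\infty$ forces $\varphi_k(y) \to -\infty$. As $y \to +\infty$, integrating the lower bound $\dot\varphi_k > 1 + h_k K$ from any fixed $y_0 > 0$ gives $\varphi_k(y) \geq \varphi_k(y_0) + (1 + h_k K)(y - y_0) \to +\infty$. By the intermediate value theorem, $\varphi_k : \mathbb{R}_+^* \to \mathbb{R}$ is a strictly increasing continuous bijection, so the equation $\varphi_k(x_{k+1}^n) = C_k$ has exactly one solution $x_{k+1}^n \in \mathbb{R}_+^*$.

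Starting from $x_0^n = x_0 > 0$ and iterating this construction for $k = 0,1,\dots,n-1$ yields the unique $\mathbb{R}_+^*$-valued solution of Equation (\ref{Euler_scheme}) on $\{0,\dots,n\}$. There is no real obstacle here: the only substantive point is to extract coercivity at $+\infty$ from Assumption \ref{drift_assumption}.(2) (via the derivative lower bound) rather than from Assumption \ref{drift_assumption}.(3), which would only give $\varphi_k(y) > (1 - h_k R)y$ and could fail when the mesh $h_k$ is large compared to $1/R$.
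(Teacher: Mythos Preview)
Your proof is correct and follows essentially the same argument as the paper: both reduce the inductive step to showing that the map $y\mapsto y - h_k b(y)$ (equivalently, the paper's $\varphi(x)=\mu+\lambda b(x)-x$ with the opposite sign) is a continuous bijection from $\mathbb R_{+}^{*}$ onto $\mathbb R$, using Assumption~\ref{drift_assumption}.(2) for strict monotonicity and coercivity at $+\infty$, and the standing hypothesis $b(0^+)=+\infty$ for the behaviour at the origin. Your closing remark that Assumption~\ref{drift_assumption}.(3) would not suffice for large mesh is a nice observation, but the core argument is the same.
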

%


%
\begin{proof}
Let $\lambda > 0$ and $\mu\in\mathbb R$ be arbitrarily chosen, and put $\varphi(x) := \mu + \lambda b(x) - x$ for every $x > 0$.
\\
\\
By Assumption \ref{drift_assumption}.(1)-(2), the function $\varphi$ is continuously differentiable on $]0,\infty[$, and
\begin{displaymath}
\dot\varphi(x) =
\lambda\dot b(x) - 1 < 0
\end{displaymath}
for every $x > 0$. So, $\varphi$ is strictly decreasing on $]0,\infty[$. By Assumption \ref{drift_assumption}.(4) :
\begin{displaymath}
\lim_{x\rightarrow 0^+}
\varphi(x) =
\mu +\lambda\lim_{x\rightarrow 0^+}b(x) =
\infty.
\end{displaymath}
Let $x > x_* > 0$ be arbitrarily chosen. By Assumption \ref{drift_assumption}.(2) :
\begin{displaymath}
b(x) < -K(x - x_*) + b(x_*).
\end{displaymath}
Then,
\begin{displaymath}
\varphi(x) < -(\lambda K + 1)x +\mu +\lambda[Kx_* + b(x_*)].
\end{displaymath}
So,
\begin{displaymath}
\lim_{x\rightarrow\infty}
b(x) = -\infty.
\end{displaymath}
Therefore, the equation $\varphi(x) = 0$ has a unique solution belonging to $]0,\infty[$.
\\
\\
In conclusion, by recurrence, Equation (\ref{Euler_scheme}) has a unique $]0,\infty[$-valued solution on $\{0,\dots,n\}$.
\end{proof}
%


%
\begin{proposition}\label{estimate_Euler_scheme}
Under Assumption \ref{drift_assumption}, the solution $x^n$ of Equation (\ref{Euler_scheme}) \mbox{satisfies :}
\begin{displaymath}
\max_{k\in\{0,\dots,n\}}
x_{k}^{n}
\leqslant
x_0 + |b(x_0)|T + 2\sigma\|w\|_{\infty,T}.
\end{displaymath}
\end{proposition}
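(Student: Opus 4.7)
The plan is to mimic the continuous-time argument used in Proposition \ref{solution_estimate}, replacing the sup-time $T_{x_0}(t)$ by its discrete analogue on the grid $\{t_0^n,\dots,t_n^n\}$.

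First I would fix $k\in\{0,\dots,n\}$ and introduce the discrete last-return index
\begin{displaymath}
K_0(k) := \sup\{j\in\{0,\dots,k\} : x_j^n\leqslant x_0\},
\end{displaymath}
which is well-defined since $x_0^n = x_0$. If $K_0(k) = k$ then $x_k^n\leqslant x_0$ and the stated estimate is immediate. Otherwise $K_0(k) < k$ and, by construction, $x_j^n > x_0$ for every $j\in\{K_0(k)+1,\dots,k\}$.

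Next I would telescope the implicit recurrence (\ref{Euler_scheme}) from index $K_0(k)$ to $k$, which yields
\begin{displaymath}
x_k^n = x_{K_0(k)}^n + \sum_{j = K_0(k)}^{k-1} b(x_{j+1}^n)(t_{j+1}^n - t_j^n) + \sigma(w_{t_k^n} - w_{t_{K_0(k)}^n}).
\end{displaymath}
For each $j$ in the summation range, $x_{j+1}^n > x_0$, so Assumption \ref{drift_assumption}.(2) (strict decrease of $b$) gives $b(x_{j+1}^n) < b(x_0) \leqslant |b(x_0)|$. Summing and using $t_k^n - t_{K_0(k)}^n \leqslant T$, the drift contribution is bounded by $|b(x_0)|T$ (in the case $b(x_0) < 0$, the drift sum is already nonpositive, so the bound holds trivially). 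Combining this with $x_{K_0(k)}^n\leqslant x_0$ and $|w_{t_k^n} - w_{t_{K_0(k)}^n}| \leqslant 2\|w\|_{\infty,T}$ produces
\begin{displaymath}
x_k^n \leqslant x_0 + |b(x_0)|T + 2\sigma\|w\|_{\infty,T}.
\end{displaymath}
Since $k$ was arbitrary and $x_k^n > 0$ by Proposition \ref{existence_Euler_scheme}, taking the maximum over $k\in\{0,\dots,n\}$ concludes the proof.

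I do not anticipate any serious obstacle: the entire argument is a direct transcription of the continuous proof of Proposition \ref{solution_estimate}. The only minor point requiring a bit of care is the sign of $b(x_0)$ when bounding the drift sum, but this is handled uniformly by the absolute value $|b(x_0)|$, exactly as in the continuous case.
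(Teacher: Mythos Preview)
Your proposal is correct and follows essentially the same argument as the paper: the paper introduces the last-return index $n(x_0,k):=\max\{i\in\{0,\dots,k\}:x_i^n\leqslant x_0\}$, telescopes the implicit recurrence from $n(x_0,k)$ to $k$, and bounds the drift sum by $b(x_0)[t_k^n-t_{n(x_0,k)}^n]\leqslant |b(x_0)|T$ using Assumption \ref{drift_assumption}.(2), exactly as you do.
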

%


%
\begin{proof}
Let $k\in\{1,\dots,n\}$ be arbitrarily chosen, and put
\begin{displaymath}
n(x_0,k) :=\max\{i\in\{0,\dots,k\} : x_{i}^{n}\leqslant x_0\}.
\end{displaymath}
If $n(x_0,k) = k$, then $0 < x_{k}^{n}\leqslant x_0$. Assume that $n(x_0,k) < k$. Then,
\begin{eqnarray*}
 x_{k}^{n} - x_{n(x_0,k)}^{n} & = &
 \sum_{i = n(x_0,k)}^{k - 1}
 x_{i + 1}^{n} - x_{i}^{n}\\
 & = &
 \sigma[w_{t_{k}^{n}} - w_{t_{n(x_0,k)}^{n}}] +
 \sum_{i = n(x_0,k)}^{k - 1}
 b(x_{i + 1}^{n})(t_{i + 1}^{n} - t_{i}^{n}).
\end{eqnarray*}
By Assumption \ref{drift_assumption}.(2) :
\begin{eqnarray*}
 \sum_{i = n(x_0,k)}^{k - 1}
 b(x_{i + 1}^{n})(t_{i + 1}^{n} - t_{i}^{n}) & \leqslant &
 b(x_0)[t_{k}^{n} - t_{n(x_0,k)}^{n}]\\
 & \leqslant &
 |b(x_0)|T.
\end{eqnarray*}
Therefore,
\begin{displaymath}
0 < x_{k}^{n}\leqslant
x_0 + |b(x_0)|T + 2\sigma\|w\|_{\infty,T}.
\end{displaymath}
That finishes the proof.
\end{proof}
\noindent
\textbf{Notations.} Throughout the subsection, the solution of Equation (\ref{main_equation_deterministic}) is denoted by $x$ instead of $x(x_0,w)$ for the sake of readability. The solution of Equation (\ref{Euler_scheme}) is denoted by $x^n$. For every $t\in ]0,T]$, put
\begin{displaymath}
x_{t}^{n} :=
\sum_{k = 0}^{n - 1}
\left[x_{k}^{n} +
\frac{x_{k + 1}^{n} - x_{k}^{n}}{t_{k + 1}^{n} - t_{k}^{n}}(t - t_{k}^{n})\right]\mathbf 1_{]t_{k}^{n},t_{k + 1}^{n}]}(t).
\end{displaymath}
The function $t\in [0,T]\mapsto x_{t}^{n}$ is also denoted by $x^n$ and called the step-$n$ implicit Euler scheme associated to Equation (\ref{main_equation_deterministic}) and to the dissection $(t_{0}^{n},t_{1}^{n},\dots,t_{n}^{n})$.
\\
\\
In the sequel, $t_{k}^{n} := kT/n$ for every $n\in\mathbb N^*$ and $k\in\{0,\dots,n\}$.
%


%
\begin{theorem}\label{convergence_Euler_scheme}
Under Assumption \ref{drift_assumption} :
\begin{eqnarray*}
 \|x^n - x\|_{\infty,T}
 & \leqslant &
 [(\|\dot b\|_{\infty,[x_*,x^*]}^{2} +\|\dot b\|_{\infty,[x_*,x^*]} + 1)\|x\|_{\alpha,T} + \|b\|_{\infty,[x_*,x^*]} + \|w\|_{\alpha,T}]\times\\
 & &
 (T^{\alpha}\vee T^{\alpha + 2})n^{-\alpha}
\end{eqnarray*}
with
\begin{displaymath}
x_* :=
\inf_{t\in [0,T]}
x_t
\textrm{ and }
x^* :=
\sup_{t\in [0,T]}
x_t.
\end{displaymath}
\end{theorem}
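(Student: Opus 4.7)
The plan is to establish the bound first at the grid points $t_k^n$ and then extend to arbitrary $t\in[0,T]$ via the piecewise linear interpolation. Define the grid-point error $e_k := x_{t_k^n} - x_k^n$ for $k\in\{0,\dots,n\}$, noting that $e_0 = 0$.

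For the grid-point analysis, I would subtract Equation~(\ref{Euler_scheme}) from the integral form of Equation~(\ref{main_equation_deterministic}) restricted to $[t_k^n, t_{k+1}^n]$. Splitting the integrand as $b(x_s) - b(x_{k+1}^n) = [b(x_s) - b(x_{t_{k+1}^n})] + [b(x_{t_{k+1}^n}) - b(x_{k+1}^n)]$ and applying the mean-value theorem to the second bracket, with some $\xi_{k+1}$ between $x_{t_{k+1}^n}$ and $x_{k+1}^n$, yields
\begin{displaymath}
e_{k+1}\left[1 - \dot b(\xi_{k+1})\tfrac{T}{n}\right] = e_k + \int_{t_k^n}^{t_{k+1}^n}[b(x_s) - b(x_{t_{k+1}^n})]\,ds.
\end{displaymath}
The key point is that Assumption~\ref{drift_assumption}.(2) gives $\dot b(\xi_{k+1}) < -K < 0$, so the factor on the left is $\geqslant 1$ regardless of the magnitude of $\dot b$. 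This provides the contraction $|e_{k+1}| \leqslant |e_k| + |\text{integral}|$. Since $x_s$ and $x_{t_{k+1}^n}$ both lie in $[x_*,x^*]$, the integrand is bounded by $\|\dot b\|_{\infty,[x_*,x^*]}\|x\|_{\alpha,T}(t_{k+1}^n - s)^{\alpha}$, contributing $O((T/n)^{\alpha+1})$ per step. Summing the at most $n$ steps gives $\max_k |e_k| \leqslant \|\dot b\|_{\infty,[x_*,x^*]}\|x\|_{\alpha,T}T^{\alpha+1}n^{-\alpha}$.

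To extend to $t\in[t_k^n,t_{k+1}^n]$, decompose $|x_t - x_t^n| \leqslant |x_t - x_{t_k^n}| + |e_k| + |x_t^n - x_k^n|$. The first term is at most $\|x\|_{\alpha,T}(T/n)^{\alpha}$ and the second is controlled by the previous bound. For the last, the linear interpolation formula gives $|x_t^n - x_k^n| \leqslant |x_{k+1}^n - x_k^n| \leqslant |b(x_{k+1}^n)|(T/n) + \sigma\|w\|_{\alpha,T}(T/n)^{\alpha}$. Writing $b(x_{k+1}^n) = b(x_{t_{k+1}^n}) + \dot b(\eta_{k+1})(x_{k+1}^n - x_{t_{k+1}^n})$ bounds $|b(x_{k+1}^n)|$ by $\|b\|_{\infty,[x_*,x^*]} + \|\dot b\|_{\infty,[x_*,x^*]}|e_{k+1}|$. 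Multiplying by $T/n$ and feeding in the grid-point estimate produces the $\|\dot b\|_{\infty,[x_*,x^*]}^{2}\|x\|_{\alpha,T}$ term with an extra factor of $T^{\alpha+2}n^{-(\alpha+1)}$, which is absorbed into $T^{\alpha+2}n^{-\alpha}$ since $n\geqslant 1$. Collecting all four contributions and bounding the various powers of $T$ by $T^{\alpha}\vee T^{\alpha+2}$ yields the claimed estimate.

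The main obstacle is ensuring the intermediate point $\eta_{k+1}$ (and similarly $\xi_{k+1}$) lies in an interval on which $\dot b$ is uniformly bounded, since $x_{k+1}^n$ is only known a priori to be positive with an upper bound from Proposition~\ref{estimate_Euler_scheme} and could in principle be very close to $0$ where $\dot b$ may blow up. This requires either an inductive bootstrap using the already-established smallness of $e_{k+1}$ to keep $x_{k+1}^n$ in a neighborhood of $[x_*,x^*]$ where $\|\dot b\|_{\infty}$ remains controlled, or a slight enlargement of the reference interval; this subtle point, together with careful bookkeeping of the powers of $T/n$ and of $T$ in the summation, is the technical crux of the proof.
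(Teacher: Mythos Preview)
Your proposal is correct and follows essentially the same route as the paper: the paper also bounds the grid-point error by writing the true solution at the nodes as a perturbed implicit step and exploiting that $b$ is strictly decreasing to obtain the one-step contraction $|e_{k+1}|\leqslant |e_k|+|\varepsilon_k^n|$ (the paper does this by a sign-based case split rather than your mean-value factor $1-\dot b(\xi_{k+1})T/n\geqslant 1$, but the two are equivalent), then bounds the local truncation $\varepsilon_k^n$ via the Lipschitz constant of $b$ on $[x_*,x^*]$ and $\|x\|_{\alpha,T}$, and finally passes to arbitrary $t$ via the piecewise-linear interpolation exactly as you outline. The subtlety you flag about the intermediate point $\eta_{k+1}$ possibly escaping $[x_*,x^*]$ is real, and the paper's proof uses the same Lipschitz bound $|b(x_{k+1}^n)-b(\xi_{k+1}^n)|\leqslant\|\dot b\|_{\infty,[x_*,x^*]}|x_{k+1}^n-\xi_{k+1}^n|$ at the corresponding step without addressing it explicitly.
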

%


%
\begin{proof}
Consider the vector $(\xi_{0}^{n},\dots,\xi_{n}^{n})$ defined by $\xi_{k}^{n} := x_{t_{k}^{n}}$ for every $k\in\{0,\dots,n\}$. By Equation (\ref{main_equation_deterministic}) :
\begin{displaymath}
\xi_{k + 1}^{n} =
\xi_{k}^{n} +
b(\xi_{k + 1}^{n})(t_{k + 1}^{n} - t_{k}^{n}) +\sigma(w_{t_{k + 1}^{n}} - w_{t_{k}^{n}}) +
\varepsilon_{k}^{n}
\end{displaymath}
with
\begin{displaymath}
\varepsilon_{k}^{n} :=
-\int_{t_{k}^{n}}^{t_{k + 1}^{n}}
[b(\xi_{k + 1}^{n}) - b(x_t)]dt
\end{displaymath}
for every $k\in\{0,\dots,n - 1\}$.
\\
\\
Let $k\in\{1,\dots,n\}$ and $i\in\{0,\dots,k - 1\}$ be arbitrarily chosen. If $x_{i + 1}^{n} >\xi_{i + 1}^{n}$, since $b$ is strictly decreasing on $]0,\infty[$ by Assumption \ref{drift_assumption}.(2) :
\begin{displaymath}
b(x_{i + 1}^{n}) - b(\xi_{i + 1}^{n})\leqslant 0.
\end{displaymath}
Then,
\begin{eqnarray}
 |x_{i + 1}^{n} -\xi_{i + 1}^{n}| & = &
 x_{i + 1}^{n} -\xi_{i + 1}^{n}
 \nonumber\\
 & = &
 x_{i}^{n} -\xi_{i}^{n} +
 [b(x_{i + 1}^{n}) - b(\xi_{i + 1}^{n})](t_{i + 1}^{n} - t_{i}^{n}) -\varepsilon_{i}^{n}
 \nonumber\\
 \label{estimate_Euler_scheme_1}
 & \leqslant &
 |x_{i}^{n} -\xi_{i}^{n}| +
 |\varepsilon_{i}^{n}|.
\end{eqnarray}
If $x_{i + 1}^{n}\leqslant\xi_{i + 1}^{n}$, since $b$ is strictly decreasing on $]0,\infty[$ by Assumption \ref{drift_assumption}.(2) :
\begin{displaymath}
b(\xi_{i + 1}^{n}) - b(x_{i + 1}^{n})\leqslant 0.
\end{displaymath}
Then,
\begin{eqnarray}
 |x_{i + 1}^{n} -\xi_{i + 1}^{n}| & = &
 \xi_{i + 1}^{n} - x_{i + 1}^{n}
 \nonumber\\
 & = &
 \xi_{i}^{n} - x_{i}^{n} +
 [b(\xi_{i + 1}^{n}) - b(x_{i + 1}^{n})](t_{i + 1}^{n} - t_{i}^{n}) +\varepsilon_{i}^{n}
 \nonumber\\
 \label{estimate_Euler_scheme_2}
 & \leqslant &
 |x_{i}^{n} -\xi_{i}^{n}| +
 |\varepsilon_{i}^{n}|.
\end{eqnarray}
So, by inequalities (\ref{estimate_Euler_scheme_1}) and (\ref{estimate_Euler_scheme_2}) together :
\begin{displaymath}
|x_{i + 1}^{n} -\xi_{i + 1}^{n}|
\leqslant
|x_{i}^{n} -\xi_{i}^{n}| +
|\varepsilon_{i}^{n}|.
\end{displaymath}
By recurrence :
\begin{equation}\label{estimate_Euler_scheme_3}
|x_{k}^{n} -\xi_{k}^{n}|
\leqslant
\sum_{i = 0}^{k - 1}|\varepsilon_{i}^{n}|.
\end{equation}
By Assumption \ref{drift_assumption}.(1), $b$ is Lipschitz continuous on $[x_*,x^*]$. Then,
\begin{eqnarray*}
 |\varepsilon_{i}^{n}|
 & \leqslant &
 \|\dot b\|_{\infty,[x_*,x^*]}
 \|x\|_{\alpha,T}
 \int_{t_{i}^{n}}^{t_{i + 1}^{n}}
 (t_{i + 1}^{n} - t)^{\alpha}dt\\
 & \leqslant &
 \|\dot b\|_{\infty,[x_*,x^*]}
 \|x\|_{\alpha,T}
 \frac{T^{\alpha + 1}}{n^{\alpha + 1}}.
\end{eqnarray*}
So, by Equation (\ref{estimate_Euler_scheme_3}) :
\begin{equation}\label{estimate_Euler_scheme_4}
|x_{k}^{n} -\xi_{k}^{n}|
\leqslant
\|\dot b\|_{\infty,[x_*,x^*]}
\|x\|_{\alpha,T}
\frac{T^{\alpha + 1}}{n^{\alpha}}.
\end{equation}
Let $t\in ]0,T]$ be arbitrarily chosen. There exists $k\in\{0,\dots,n - 1\}$ such that $t\in ]t_{k}^{n}, t_{k + 1}^{n}]$. By Inequality (\ref{estimate_Euler_scheme_4}) :
\begin{eqnarray}
 |x_{k + 1}^{n} - x_{k}^{n}| & \leqslant &
 [|[b(x_{k + 1}^{n}) - b(\xi_{k + 1}^{n})| + |b(\xi_{k + 1}^{n})|](t_{k + 1}^{n} - t_{k}^{n}) +
 \|w\|_{\alpha,T}(t_{k + 1}^{n} - t_{k}^{n})^{\alpha}
 \nonumber\\
 & \leqslant &
 [[\|\dot b\|_{\infty,[x_*,x^*]}|x_{k + 1}^{n} -\xi_{k + 1}^{n}| + \|b\|_{\infty,[x_*,x^*]}]T + \|w\|_{\alpha,T}T^{\alpha}]n^{-\alpha}
 \nonumber\\
 \label{estimate_Euler_scheme_5}
 & \leqslant &
 [\|\dot b\|_{\infty,[x_*,x^*]}^{2}\|x\|_{\alpha,T} + \|b\|_{\infty,[x_*,x^*]} + \|w\|_{\alpha,T}](T^{\alpha}\vee T^{\alpha + 2})n^{-\alpha}.
\end{eqnarray}
By inequalities (\ref{estimate_Euler_scheme_4}) and (\ref{estimate_Euler_scheme_5}) together :
\begin{eqnarray*}
 |x_{t}^{n} - x_t| & \leqslant &
 |x_{t}^{n} - x_{k}^{n}| +
 |x_{k}^{n} -\xi_{k}^{n}| +
 |\xi_{k}^{n} - x_t|\\
 & \leqslant &
 |x_{k + 1}^{n} - x_{k}^{n}| +
 (\|\dot b\|_{\infty,[x_*,x^*]} + 1)
 \|x\|_{\alpha,T}
 (T^{\alpha}\vee T^{\alpha + 1})n^{-\alpha}\\
 & \leqslant &
 [(\|\dot b\|_{\infty,[x_*,x^*]}^{2} +\|\dot b\|_{\infty,[x_*,x^*]} + 1)\|x\|_{\alpha,T} + \|b\|_{\infty,[x_*,x^*]} + \|w\|_{\alpha,T}]\times\\
 & &
 (T^{\alpha}\vee T^{\alpha + 2})n^{-\alpha}.
\end{eqnarray*}
That finishes the proof.
\end{proof}
%


%
\section{Probabilistic and statistical properties of the solution}
\noindent
Let $(\Omega,\mathcal A,\mathbb P)$ be the canonical probability space associated to the stochastic process $B$.
\\
\\
The solution of Equation (\ref{main_equation}) is the stochastic process $X(x_0) := (X_t(x_0))_{t\in\mathbb R_+}$ such that :
\begin{displaymath}
X_t(x_0,\omega) :=
x_t[x_0,B(\omega)]
\end{displaymath}
for every $\omega\in\Omega$ and $t\in\mathbb R_+$.
\\
\\
\textbf{Notations :}
\begin{itemize}
 \item The expectation operator associated to the probability measure $\mathbb P$ is denoted by $\mathbb E$.
 \item For every $p > 0$, the space of random variables $U :\Omega\rightarrow\mathbb R$ such that $\mathbb E(|U|^p) <\infty$ is denoted by $L^p(\Omega,\mathbb P)$ and equipped with its usual \mbox{norm $\|.\|_p$.}
\end{itemize}
\noindent
Under Assumption \ref{drift_assumption}, if $B$ is a centered Gaussian process with locally $\alpha$-H\"older continuous paths, by Proposition \ref{solution_estimate} together with Fernique's theorem (see Fernique \cite{FERNIQUE70}) :
\begin{displaymath}
\|X(x_0)\|_{\infty,T}\in L^p(\Omega,\mathbb P)
\end{displaymath}
for every $p,T > 0$.
\\
\\
The section deals with probabilistic and statistical properties of $X(x_0)$, obtained via its deterministic properties proved previously and various additional conditions on the signal $B$.
%


%
\subsection{Ergodicity of the solution}
Assume that $B$ is a two-sided fractional Brownian motion of Hurst parameter $H\in ]0,1[$ ($\alpha\in ]0,H[$).
\\
\\
Let $\theta := (\theta_t)_{t\in\mathbb R}$ be the dynamical system on $(\Omega,\mathcal A)$, called Wiener shift, such that :
\begin{displaymath}
\theta_t\omega :=\omega_{t + .} -\omega_t
\end{displaymath}
for every $\omega\in\Omega$ and $t\in\mathbb R$. By Maslowski and Schmalfuss \cite{MS04}, $(\Omega,\mathcal A,\mathbb P,\theta)$ is an metric dynamical system (i.e.
\begin{itemize}
 \item $(t,\omega)\in\mathbb R\times\Omega\longmapsto\theta_t\omega$ is $\mathcal B(\mathbb R)\otimes\mathcal A,\mathcal A$-measurable.
 \item For every $t\in\mathbb R$, $\theta_t\mathbb P =\mathbb P$ where
 \begin{displaymath}
 (\theta_t\mathbb P)(A) :=
 \mathbb P(\{\omega\in\Omega :\theta_t\omega\in A\})
 \textrm{ $;$ }
 \forall A\in\mathcal A),
 \end{displaymath}
\end{itemize}
which is ergodic.
%


%
\begin{lemma}\label{fBm_control}
There exists a $\theta$-invariant set $\Omega^*\in\mathcal A$ satisfying $\mathbb P(\Omega^*) = 1$, such that for every $\omega\in\Omega^*$,
\begin{displaymath}
\exists C(\omega) > 0 :
\forall t\in\mathbb R\textrm{$,$ }
|B_t(\omega)|\leqslant
C(\omega)(1 + |t|^2).
\end{displaymath}
\end{lemma}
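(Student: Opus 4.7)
The plan is to first establish, via Borel--Cantelli and Gaussian tail estimates, that the quadratic bound holds almost surely, and then to promote the exceptional null set to a $\theta$-invariant one by a direct algebraic argument on the constant $C(\omega)$.

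For the first step, I would exploit that $B_n \sim \mathcal N(0,|n|^{2H})$ for each $n \in \mathbb Z$. Since $\mathbb P(|B_n|\geqslant n^2)\leqslant 2\exp(-n^{4-2H}/2)$ is summable, Borel--Cantelli yields an event of full measure on which $|B_n(\omega)|\leqslant n^2$ for every $|n|$ sufficiently large. To pass from integers to arbitrary $t$, set $M_n(\omega):=\sup_{s\in[0,1]}|B_{n+s}(\omega)-B_n(\omega)|$. By stationarity of increments, $M_n$ has the same law as $M_0=\sup_{s\in[0,1]}|B_s|$, and Fernique's theorem applied to the continuous Gaussian process $(B_s)_{s\in[0,1]}$ ensures $\mathbb E(M_0^2)<\infty$. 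Then $\mathbb P(M_n\geqslant n^2)\leqslant \mathbb E(M_0^2)/n^4$ is again summable, so a second application of Borel--Cantelli gives an event of full measure on which $M_n(\omega)\leqslant n^2$ for every $|n|$ sufficiently large. Intersecting these two events, and incorporating path continuity to handle the finite range, yields an event $\Omega_0\in\mathcal A$ of probability one on which
\begin{displaymath}
 \tilde C(\omega) := \sup_{t\in\mathbb R}\frac{|B_t(\omega)|}{1+|t|^2} < \infty.
\end{displaymath}

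For the second step, I would define $\Omega^* := \{\omega\in\Omega_0 : \tilde C(\omega)<\infty\}$ (which coincides with $\Omega_0$) and check that it is automatically $\theta$-invariant. Indeed, for any $\omega\in\Omega^*$ and any $s\in\mathbb R$,
\begin{displaymath}
 |B_t(\theta_s\omega)| = |B_{t+s}(\omega)-B_s(\omega)| \leqslant \tilde C(\omega)\bigl[(1+|t+s|^2)+(1+|s|^2)\bigr],
\end{displaymath}
and using $(|t|+|s|)^2\leqslant 2|t|^2+2|s|^2$ one gets $|B_t(\theta_s\omega)|\leqslant \tilde C(\omega)(2+3|s|^2)(1+|t|^2)$, so that $\tilde C(\theta_s\omega)\leqslant \tilde C(\omega)(2+3|s|^2)<\infty$. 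Hence $\theta_s\omega\in\Omega^*$ for every $s\in\mathbb R$, and taking $C(\omega):=\tilde C(\omega)$ finishes the proof.

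The main obstacle is the $\theta$-invariance requirement, since an uncountable intersection $\bigcap_{s\in\mathbb R}\theta_s^{-1}\Omega_0$ is not directly measurable; the calculation in the previous paragraph circumvents this by showing that the natural set $\{\tilde C<\infty\}$ is already $\theta$-stable, so no intersection is needed. The Gaussian and Borel--Cantelli estimates themselves are standard and should go through with minimal calculation.
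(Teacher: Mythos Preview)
The paper does not supply its own proof of this lemma; immediately after the statement it writes ``For a proof, see Gess et al.\ \cite{GLR11}, Lemma 3.3 generalizing Maslowski and Schmalfuss \cite{MS04}, Lemma 2.6.'' Your proposal therefore goes well beyond what the paper does, and the approach (Borel--Cantelli on $\{|B_n|\geqslant n^2\}$ and on the oscillation $\{M_n\geqslant n^2\}$, then an algebraic check that the set $\{\tilde C<\infty\}$ is $\theta$-stable) is sound and is in fact the standard way this lemma is proved in the cited references.

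One small point to tighten. In your second step you show that $\tilde C(\omega)<\infty$ implies $\tilde C(\theta_s\omega)<\infty$, which is exactly the statement that the set $\{\omega:\tilde C(\omega)<\infty\}$ is $\theta$-invariant. But the set $\Omega_0$ produced by Borel--Cantelli is, as you phrase it, only an event \emph{on which} $\tilde C<\infty$, i.e.\ a priori a subset of $\{\tilde C<\infty\}$; so setting $\Omega^*:=\{\omega\in\Omega_0:\tilde C(\omega)<\infty\}=\Omega_0$ does not quite close the loop, since your invariance calculation only lands $\theta_s\omega$ in $\{\tilde C<\infty\}$, not in $\Omega_0$. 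The fix is trivial: define $\Omega^*:=\{\omega\in\Omega:\tilde C(\omega)<\infty\}$ directly. Measurability holds because, by path continuity on the canonical space, $\tilde C(\omega)=\sup_{t\in\mathbb Q}|B_t(\omega)|/(1+|t|^2)$ is a countable supremum of measurable maps; $\mathbb P(\Omega^*)=1$ since $\Omega_0\subset\Omega^*$; and $\theta$-invariance is precisely your computation.
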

\noindent
For a proof, see Gess et al. \cite{GLR11}, Lemma 3.3 generalizing Maslowski and Schmalfuss \cite{MS04}, Lemma 2.6.
\\
\\
\textbf{Remarks :}
\begin{enumerate}
 \item In the sequel, $\Omega^*$ is equipped with the trace $\sigma$-algebra
 \begin{displaymath}
 \mathcal A^* :=
 \{A\cap\Omega^*
 \textrm{ $;$ }A\in\mathcal A\}.
 \end{displaymath}
 \item $(\Omega^*,\mathcal A^*,\mathbb P,\theta)$ is also an ergodic metric dynamical system.
\end{enumerate}
The map
\begin{displaymath}
X(.) : (\omega,x_0,t)\in\Omega\times\mathbb R_{+}^{2}
\longmapsto
X_t(x_0,\omega)
\end{displaymath}
is a continuous random dynamical system on $(\mathbb R_+,\mathcal B(\mathbb R_+))$ over the metric dynamical systems $(\Omega,\mathcal A,\mathbb P,\theta)$ and $(\Omega^*,\mathcal A^*,\mathbb P,\theta)$.
\\
\\
The reader can refer to Arnold \cite{ARNOLD98} on random dynamical systems.
\\
\\
\textbf{Notation.} Let $(W_t)_{t\in\mathbb R_+}$ be a stochastic process on $(\Omega,\mathcal A,\mathbb P)$. For every $\omega\in\Omega$ and $t,T\in\mathbb R_+$,
\begin{displaymath}
W_{t,T}(\omega) := W_t(\theta_{-T}\omega).
\end{displaymath}
%


%
\begin{proposition}\label{epsilon_estimate}
Under Assumption \ref{drift_assumption}, for every $\omega\in\Omega^*$, there exists a constant $C(\omega) > 0$ such that for every $t,T,x_0\in\mathbb R_+$ and $\varepsilon\geqslant x_0$,
\begin{displaymath}
|X_{t,T}(x_0,\omega) -\varepsilon|
\leqslant
\varepsilon +
|b(\varepsilon)|t +
C(\omega)(1 + t + T)^2.
\end{displaymath}
\end{proposition}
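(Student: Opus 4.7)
The idea is to adapt the proof of Proposition \ref{solution_estimate} by replacing the initial condition $x_0$ with the threshold $\varepsilon$, and then to absorb the growth of the shifted driving signal $B_{\cdot,T}(\omega)$ into a polynomial in $(t+T)$ via Lemma \ref{fBm_control}.

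Fix $\omega \in \Omega^*$, $t,T,x_0\geq 0$ and $\varepsilon\geq x_0$. Since $X_{s,T}(x_0,\omega)=x_s(x_0,B_{\cdot,T}(\omega))$ with $B_{\cdot,T}(\omega)\in C^{\alpha}(\mathbb R_+,\mathbb R)$, I would introduce
$$T_{\varepsilon}(t) := \sup\{s\in[0,t] : X_{s,T}(x_0,\omega)\leq \varepsilon\},$$
which is well defined because $X_{0,T}(x_0,\omega)=x_0\leq\varepsilon$. If $T_{\varepsilon}(t)=t$, then $X_{t,T}(x_0,\omega)\in(0,\varepsilon]$, hence $|X_{t,T}(x_0,\omega)-\varepsilon|\leq\varepsilon$. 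Otherwise $T_{\varepsilon}(t)<t$; by continuity $X_{T_{\varepsilon}(t),T}(x_0,\omega)=\varepsilon$ and $X_{s,T}(x_0,\omega)>\varepsilon$ on $(T_{\varepsilon}(t),t]$, so that by Assumption \ref{drift_assumption}.(2) one has $b(X_{s,T}(x_0,\omega))\leq b(\varepsilon)$ on that interval. Substituting in the integral equation satisfied by $X_{\cdot,T}(x_0,\omega)$ gives
$$X_{t,T}(x_0,\omega) - \varepsilon \leq |b(\varepsilon)|\,t + 2\sigma\sup_{s\in[0,t]}|B_{s,T}(\omega)|.$$

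It remains to bound the noise term. Writing $B_{s,T}(\omega)=B_{s-T}(\omega)-B_{-T}(\omega)$ and invoking Lemma \ref{fBm_control}, the triangle inequality yields $|B_{s,T}(\omega)|\leq C(\omega)(2+(s-T)^{2}+T^{2})$, which for $s\in[0,t]$ is bounded above by $2C(\omega)(1+t+T)^{2}$ since $(s-T)^{2}\leq(t+T)^{2}$, $T^{2}\leq(t+T)^{2}$ and $1+(t+T)^{2}\leq(1+t+T)^{2}$. Combining the two cases and relabeling the constant delivers the stated inequality.

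The argument is essentially a cosmetic adaptation of Proposition \ref{solution_estimate}, so I do not anticipate any serious obstacle; only the elementary bookkeeping needed to express the shifted Gaussian path in terms of a quadratic in $(t+T)$ requires some care, and this is exactly where Lemma \ref{fBm_control} does the work.
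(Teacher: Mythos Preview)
Your proposal is correct and follows essentially the same route as the paper's proof: the paper also introduces the last exit time from $[0,\varepsilon]$, splits into the two cases $T_\varepsilon(t)=t$ and $T_\varepsilon(t)<t$, bounds the drift integral by $|b(\varepsilon)|t$ using Assumption \ref{drift_assumption}.(2), and controls the noise increment via Lemma \ref{fBm_control} to obtain $4|\sigma|C_0(\omega)(1+t+T)^2$. The only cosmetic differences are that the paper bounds the single increment $|B_{t-T}(\omega)-B_{\tau_t^-(\varepsilon)-T}(\omega)|$ directly rather than passing through $2\sup_{s\in[0,t]}|B_{s,T}(\omega)|$, and it records separately that the case $x_0=0$ follows by the same estimate.
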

%


%
\begin{proof}
Let $\omega\in\Omega^*$, $t,T\in\mathbb R_+$ and $\varepsilon\geqslant x_0 > 0$ be arbitrarily chosen, and put
\begin{displaymath}
\tau_{t}^{-}(\varepsilon,\theta_{-T}\omega) :=
\sup\{s\in [0,t] : X_s(x_0,\theta_{-T}\omega)\leqslant\varepsilon\}.
\end{displaymath}
If $\tau_{t}^{-}(\varepsilon,\theta_{-T}\omega) = t$, then
\begin{displaymath}
|X_{t,T}(x_0,\omega) - \varepsilon|\leqslant\varepsilon.
\end{displaymath}
Assume that $\tau_{t}^{-}(\varepsilon,\theta_{-T}\omega) < t$. Then,
\begin{eqnarray}
 X_{t,T}(x_0,\omega) & = &
 X_{\tau_{t}^{-}(\varepsilon,\theta_{-T}\omega),T}(x_0,\omega) +
 \int_{\tau_{t}^{-}(\varepsilon,\theta_{-T}\omega)}^{t}
 b[X_{s,T}(x_0,\omega)]ds +
 \nonumber\\
 & &
 \sigma[B_{t,T}(\omega) - B_{\tau_{t}^{-}(\varepsilon,\theta_{-T}\omega),T}(\omega)]
 \nonumber\\
 \label{epsilon_estimate_1}
 & = &
 \varepsilon +
 \int_{\tau_{t}^{-}(\varepsilon,\theta_{-T}\omega)}^{t}
 b[X_{s,T}(x_0,\omega)]ds +
 \sigma[B_{t - T}(\omega) - B_{\tau_{t}^{-}(\varepsilon,\theta_{-T}\omega) - T}(\omega)].
\end{eqnarray}
On one hand, by Assumption \ref{drift_assumption}.(2) :
\begin{eqnarray}
 \int_{\tau_{t}^{-}(\varepsilon,\theta_{-T}\omega)}^{t}
 b[X_{s,T}(x_0,\omega)]ds
 & \leqslant &
 b(\varepsilon)[t -\tau_{t}^{-}(\varepsilon,\theta_{-T}\omega)]
 \nonumber\\
 \label{epsilon_estimate_2}
 & \leqslant &
 |b(\varepsilon)|t.
\end{eqnarray}
On the other hand, by Lemma \ref{fBm_control}, there exists a constant $C_0(\omega) > 0$, not depending on $t$, $T$, $x_0$ and $\varepsilon$, such that :
\begin{eqnarray}
 |B_{t - T}(\omega) - B_{\tau_{t}^{-}(\varepsilon,\theta_{-T}\omega) - T}(\omega)|
 & \leqslant &
 C_0(\omega)[2 + |t - T|^2 + |\tau_{t}^{-}(\varepsilon,\theta_{-T}\omega) - T|^2]
 \nonumber\\
 \label{epsilon_estimate_3}
 & \leqslant &
 4C_0(\omega)(1 + t + T)^2.
\end{eqnarray}
Therefore, by Equality (\ref{epsilon_estimate_1}) together with inequalities (\ref{epsilon_estimate_2}) and (\ref{epsilon_estimate_3}) :
\begin{displaymath}
0\leqslant
X_{t,T}(x_0,\omega) -\varepsilon
\leqslant
|b(\varepsilon)|t +
4\sigma C_0(\omega)(1 + t + T)^2.
\end{displaymath}
In conclusion, by putting $C(\omega) := 4\sigma C_0(\omega)$, for every $t,T\in\mathbb R_+$,
\begin{equation}\label{epsilon_estimate_final}
|X_{t,T}(x_0,\omega) -\varepsilon|
\leqslant
\varepsilon +
|b(\varepsilon)|t +
C(\omega)(1 + t + T)^2.
\end{equation}
If $x_0 = 0$, Inequality (\ref{epsilon_estimate_final}) holds true.
\end{proof}
%


%
\begin{theorem}\label{stationary_solution_CIR}
Under Assumption \ref{drift_assumption}, there exists a random variable $X^* :\Omega\rightarrow\mathbb R_+$ belonging to $L^p(\Omega,\mathbb P)$ for every $p > 0$, such that for every $x_0\in\mathbb R_+$,
\begin{displaymath}
\left|X_T(x_0) - X^*\circ\theta_T\right|
\xrightarrow[T\rightarrow\infty]{} 0
\end{displaymath}
almost surely and in $L^p(\Omega,\mathbb P)$ for every $p > 0$.
\end{theorem}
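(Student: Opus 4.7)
The plan is to construct $X^*$ as the pullback limit $X^*(\omega) := \lim_{T\to\infty} X_T(x_0,\theta_{-T}\omega)$ and then show via the exponential contraction with respect to initial conditions (Corollary \ref{Lipschitz_continuity_IC}) that this limit does not depend on $x_0$ and that the forward trajectories track the stationary one.

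I first fix $\omega\in\Omega^*$ and work with the pullback cocycle $Y_T(\omega) := X_T(x_0,\theta_{-T}\omega)$, using the extension $x(0,w)$ of Corollary \ref{extension_Ito_map} if $x_0=0$. The key identity is the cocycle decomposition
\begin{displaymath}
Y_{T'}(\omega) = X_{T'}(x_0,\theta_{-T'}\omega) = X_T\bigl(X_{T'-T}(x_0,\theta_{-T'}\omega),\theta_{-T}\omega\bigr)
\end{displaymath}
valid for $0 \leq T \leq T'$. Combined with Corollary \ref{Lipschitz_continuity_IC}, this gives
\begin{displaymath}
|Y_{T'}(\omega) - Y_T(\omega)| \leq |x_0 - X_{T'-T,T'}(x_0,\omega)|\, e^{-KT},
\end{displaymath}
and Proposition \ref{epsilon_estimate} (with $\varepsilon = x_0 \vee 1$, $t = T'-T$) bounds the right-hand factor by a polynomial in $T'$. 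Specializing to $T=n$, $T'=n+1$, one obtains $|Y_{n+1}(\omega)-Y_n(\omega)| \leq \bigl[x_0 + |b(x_0\vee 1)| + C(\omega)(n+3)^2\bigr]e^{-Kn}$, whose sum converges; hence $(Y_n)_{n\in\mathbb N}$ is Cauchy almost surely. An identical estimate, taking the supremum over $s\in [0,1]$ in $|Y_{n+s}-Y_n|$, extends this to a continuous-parameter limit. Define $X^*(\omega) := \lim_{n\to\infty} Y_n(\omega)$ on $\Omega^*$.

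Next, I show the crucial identity $X^*(\theta_T\omega) = X_T(X^*(\omega),\omega)$. Using the cocycle again, for $u > T$,
\begin{displaymath}
X_u(x_0,\theta_{T-u}\omega) = X_T\bigl(X_{u-T}(x_0,\theta_{-(u-T)}\omega),\omega\bigr) = X_T(Y_{u-T}(\omega),\omega),
\end{displaymath}
and passing to the limit $u\to\infty$ using the continuity of the initial-condition map $X_T(\cdot,\omega)$ from Proposition \ref{lipschitz_continuity_solution} yields the claim. Then Corollary \ref{Lipschitz_continuity_IC} directly gives
\begin{displaymath}
|X_T(x_0,\omega) - X^*(\theta_T\omega)| = |X_T(x_0,\omega) - X_T(X^*(\omega),\omega)| \leq |x_0 - X^*(\omega)|\, e^{-KT},
\end{displaymath}
which is the desired almost sure convergence.

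For the $L^p$ integrability of $X^*$ and the $L^p$ convergence, I sum the telescoping bound from $n=0$: $|X^*(\omega) - x_0| \leq \sum_{n\geq 0} |Y_{n+1}(\omega) - Y_n(\omega)| \leq c_1 + c_2\, C(\omega)$ for deterministic constants $c_1,c_2 > 0$. Since the random constant $C(\omega)$ furnished by Lemma \ref{fBm_control} is a Gaussian-type supremum and therefore belongs to $L^p(\Omega,\mathbb P)$ for every $p>0$ (by Fernique's theorem), it follows that $X^* \in L^p(\Omega,\mathbb P)$. The $L^p$ convergence $\|X_T(x_0) - X^*\circ\theta_T\|_p \leq e^{-KT}\|x_0 - X^*\|_p \to 0$ is then immediate. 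The main technical obstacle is reconciling the polynomial-in-$T$ blow-up from Proposition \ref{epsilon_estimate} with the exponential contraction rate $e^{-KT}$; this is handled by evaluating Proposition \ref{epsilon_estimate} on the incremental pullback time $T'-T$ (kept equal to $1$) rather than on $T'$ itself, so that only the polynomial prefactor (depending on $\omega$ via $C(\omega)$) must be dominated, and the $L^p$ integrability of $C(\omega)$ then closes the argument.
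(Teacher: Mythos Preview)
Your almost-sure argument is essentially the paper's: construct the pullback limit via the cocycle identity, the exponential contraction of Corollary~\ref{Lipschitz_continuity_IC}, and the polynomial bound of Proposition~\ref{epsilon_estimate}; then verify the fixed-point relation $X^*(\theta_T\omega)=X_T(X^*(\omega),\omega)$ and deduce forward convergence.

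For the $L^p$ part, however, your route diverges from the paper's. You sum the pathwise telescoping bound and obtain $|X^*(\omega)-x_0|\leqslant c_1+c_2\,C(\omega)$, then invoke $C(\omega)\in L^p$. The paper never claims that the constant $C(\omega)$ of Lemma~\ref{fBm_control} has moments; instead it uses the $\theta$-invariance of $\mathbb P$ to write $\|X_1(x_0)\circ\theta_{-(n+1)}-\varepsilon\|_p=\|X_1(x_0)-\varepsilon\|_p$, so that the Cauchy estimate in $L^p$ reduces to a fixed-time moment bound (available from Proposition~\ref{solution_estimate} and Fernique on the compact interval $[0,1]$). Your approach is valid, but note that Lemma~\ref{fBm_control} as stated only asserts the \emph{existence} of some $C(\omega)$, not its integrability; to close your argument you must take the specific choice $C(\omega)=\sup_{t\in\mathbb R}|B_t(\omega)|/(1+|t|^2)$ and appeal to the Landau--Shepp--Fernique theorem for suprema of Gaussian processes (the supremum being a.s.\ finite forces Gaussian tails). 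This is a standard result but a stronger input than the compact-interval Fernique the paper uses. What the paper's shift-invariance trick buys is a reduction to already-established moment bounds with no extra analytic input; what your approach buys is a shorter endgame (the final $L^p$ convergence is the one-line estimate $\|X_T(x_0)-X^*\circ\theta_T\|_p\leqslant e^{-KT}\|x_0-X^*\|_p$) once $C\in L^p$ is in hand.
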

%


%
\begin{proof}
Let $\omega\in\Omega^*$, $t,x_0\in\mathbb R_+$, $n\in\mathbb N$ and $p > 0$ be arbitrarily chosen.
\\
\\
\textbf{Almost sure convergence.} By the cocycle property of the random dynamical system $X(.)$, Corollary \ref{Lipschitz_continuity_IC} and Proposition \ref{epsilon_estimate} ; there exists a constant $C(\omega) > 0$, not depending on $t$, $n$ and $x_0$, such that for every $\varepsilon\geqslant x_0$,
\begin{small}
\begin{eqnarray}
 |X_n(x_0,\theta_{-n}\omega) -
 X_{n + 1}(x_0,\theta_{-(n + 1)}\omega)|
 & = &
 |X_n(x_0,\theta_{-n}\omega) -
 X_n[X_1(x_0,\theta_{-(n + 1)}\omega),\theta_{-n}\omega]|
 \nonumber\\
 & \leqslant &
 e^{-Kn}|x_0 - X_1(x_0,\theta_{-(n + 1)}\omega)|
 \nonumber\\
 \label{ergodicity_CIR_estimate_1}
 & \leqslant &
 e^{-Kn}[|x_0 -\varepsilon| +
 |X_1(x_0,\theta_{-(n + 1)}\omega) -\varepsilon|]\\
 & \leqslant &
 e^{-Kn}[|x_0 -\varepsilon| +
 \varepsilon + |b(\varepsilon)| + C(\omega)(3 + n)^2].
 \nonumber
\end{eqnarray}
\end{small}
\newline
Since $n^k =_{n\rightarrow\infty}\textrm o(e^{Kn})$ for every $k\in\mathbb N$, $(X_n(x_0,\theta_{-n}\omega))_{n\in\mathbb N}$ is a Cauchy sequence, and its limit $X^0(\omega)$ is not depending on $x_0$ because for every other initial condition $x_1 > 0$,
\begin{displaymath}
|X_n(x_0,\theta_{-n}\omega) -
X_n(x_1,\theta_{-n}\omega)|
\leqslant
e^{-Kn}|x_0 - x_1|
\xrightarrow[n\rightarrow\infty]{} 0.
\end{displaymath}
For every $\varepsilon\geqslant x_0$,
\begin{small}
\begin{eqnarray}
 |X_t(x_0,\theta_{-t}\omega) - X^0(\omega)|
 & \leqslant &
 |X_t(x_0,\theta_{-t}\omega) - X_{[t]}(x_0,\theta_{-[t]}\omega)| +
 |X_{[t]}(x_0,\theta_{-[t]}\omega) - X^0(\omega)|
 \nonumber\\
 & = &
 |X_{[t]}[X_{t - [t]}(x_0,\theta_{-t}\omega),\theta_{-[t]}\omega] -
 X_{[t]}(x_0,\theta_{-[t]}\omega)| +
 \nonumber\\
 & &
 |X_{[t]}(x_0,\theta_{-[t]}\omega) - X^0(\omega)|
 \nonumber\\
 \label{ergodicity_CIR_estimate_2}
 & \leqslant &
 e^{-K[t]}[
 |x_0 -\varepsilon| +
 |X_{t - [t]}(x_0,\theta_{-t}\omega) -\varepsilon|] +\\
 & &
 |X_{[t]}(x_0,\theta_{-[t]}\omega) - X^0(\omega)|
 \nonumber\\
 & \leqslant &
 e^{-K[t]}[|x_0 -\varepsilon| +\varepsilon + |b(\varepsilon)| + C(\omega)(2 + [t])^2] +
 \nonumber\\
 & &
 |X_{[t]}(x_0,\theta_{-[t]}\omega) - X^0(\omega)|.
 \nonumber
\end{eqnarray}
\end{small}
\newline
Therefore,
\begin{equation}\label{ergodicity_CIR_1}
\lim_{t\rightarrow\infty}
|X_t(x_0,\theta_{-t}\omega) - X^0(\omega)| = 0
\end{equation}
because $[t]^k =_{t\rightarrow\infty}\textrm o(e^{K[t]})$ for every $k\in\mathbb N$. By the cocycle property of the random dynamical system $X(.)$ :
\begin{eqnarray}
 \label{ergodicity_CIR_2}
 X_t[X_n(x_0,\theta_{-n}\omega),\omega] & = &
 X_{t + n}(x_0,\theta_{-n}\omega)\\
 & = &
 X_{t + n}[x_0,\theta_{-(t + n)}(\theta_t\omega)].
 \nonumber
\end{eqnarray}
By continuity of $X(.,\omega)$ from $\mathbb R_+$ into $C^0(\mathbb R_+)$, Corollary \ref{extension_Ito_map} and (\ref{ergodicity_CIR_1}) ; when $n$ goes to infinity in Equality (\ref{ergodicity_CIR_2}) :
\begin{displaymath}
X_t[X^0(\omega),\omega] = X^0(\theta_t\omega).
\end{displaymath}
Since $(\Omega^*,\mathcal A^*,\mathbb P,\theta)$ is an ergodic metric dynamical system and $X^0$ is a (generalized) random fixed point of the continuous random dynamical system $X(.)$, $(X^0\circ\theta_t)_{t\in\mathbb R_+}$ is a stationary solution of Equation (\ref{main_equation}). Therefore, for every $\omega\in\Omega^*$,
\begin{displaymath}
\lim_{t\rightarrow\infty}
|X_t(x_0,\omega) - X^0(\theta_t\omega)| = 0
\end{displaymath}
because all solutions of Equation (\ref{main_equation}) converge pathwise forward to each other in time by Corollary \ref{Lipschitz_continuity_IC}.
\\
\\
\textbf{Convergence in $L^p(\Omega,\mathbb P)$.} Since $B$ and $(B_{s - t} - B_{-t})_{s\in\mathbb R}$ have the same distribution $\mathbb P$, for every $U\in L^p(\Omega,\mathbb P)$ and $s\in\mathbb R_+$,
\begin{equation}\label{equality_expectation}
\|X_s(x_0)\circ\theta_{-t} - U\|_p =
\|X_s(x_0) - U\circ\theta_t\|_p.
\end{equation}
By Inequality (\ref{ergodicity_CIR_estimate_1}) and Equality (\ref{equality_expectation}), for every $\varepsilon\geqslant x_0$,
\begin{displaymath}
\|X_n(x_0)\circ\theta_{-n} -
X_{n + 1}(x_0)\circ\theta_{-(n + 1)}\|_p
\leqslant e^{-Kn}[|x_0 -\varepsilon| +
\|X_1(x_0,\omega) -\varepsilon\|_p].
\end{displaymath}
Then, since the set $L^p(\Omega,\mathbb P)$ equipped with $\|.\|_p$ is a Banach space, there exists $X^*\in L^p(\Omega,\mathbb P)$ such that :
\begin{displaymath}
\lim_{n\rightarrow\infty}
\|X_n(x_0)\circ\theta_{-n} - X^*\|_p = 0
\end{displaymath}
and $X^*(\omega) = X^0(\omega)$ for every $\omega\in\Omega^*$. By Inequality (\ref{ergodicity_CIR_estimate_2}) and Equality (\ref{equality_expectation}), for every $\varepsilon\geqslant x_0$,
\begin{eqnarray*}
 \|X_t(x_0)\circ\theta_{-t} - X^*\|_p
 & \leqslant &
 e^{-K[t]}\left[|x_0 -\varepsilon| +
 \sup_{s\in [0,1]}
 \|X_s(x_0) -\varepsilon\|_p\right] +\\
 & &
 \|X_{[t]}(x_0)\circ\theta_{-[t]} - X^*\|_p.
\end{eqnarray*}
Then,
\begin{displaymath}
\lim_{t\rightarrow\infty}
\|X_t(x_0)\circ\theta_{-t} - X^*\|_p= 0.
\end{displaymath}
Therefore, by Equality (\ref{equality_expectation}) :
\begin{eqnarray*}
 \lim_{t\rightarrow\infty}
 \|X_t(x_0) - X^*\circ\theta_t\|_p & = &
 \lim_{t\rightarrow\infty}
 \|X_t(x_0)\circ\theta_{-t} - X^*\|_p\\
 & = & 0.
\end{eqnarray*}
\end{proof}
%


%
\begin{corollary}\label{ergodic_theorem_CIR}
Under Assumption \ref{drift_assumption}, for every uniformly continuous function $\varphi :\mathbb R_+\rightarrow\mathbb R$ with polynomial growth, and every $x_0\in\mathbb R_+$,
\begin{displaymath}
\frac{1}{T}\int_{0}^{T}\varphi[X_t(x_0)]dt
\xrightarrow[T\rightarrow\infty]{}
\mathbb E[\varphi(X^*)]
\end{displaymath}
almost surely and in $L^p(\Omega,\mathbb P)$ for every $p > 0$.
\end{corollary}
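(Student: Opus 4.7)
The plan is to decompose
\[
\frac{1}{T}\int_{0}^{T}\varphi[X_t(x_0)]dt - \mathbb E[\varphi(X^*)]
= A_T + B_T,
\]
where
\[
A_T := \frac{1}{T}\int_{0}^{T}\bigl(\varphi[X_t(x_0)] - \varphi[X^*\circ\theta_t]\bigr)dt
\quad\textrm{and}\quad
B_T := \frac{1}{T}\int_{0}^{T}\varphi(X^*\circ\theta_t)dt - \mathbb E[\varphi(X^*)],
\]
and treat each term separately, using Theorem \ref{stationary_solution_CIR} for $A_T$ and Birkhoff's continuous-time ergodic theorem for $B_T$.

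For $B_T$, I would first observe that by Theorem \ref{stationary_solution_CIR}, $X^*$ has moments of every order. Since $\varphi$ has polynomial growth, $\varphi(X^*)\in L^p(\Omega,\mathbb P)$ for every $p>0$. Because $(\Omega^*,\mathcal A^*,\mathbb P,\theta)$ is an ergodic metric dynamical system and $(X^*\circ\theta_t)_{t\in\mathbb R_+}$ is the stationary solution of Equation (\ref{main_equation}) identified in the proof of Theorem \ref{stationary_solution_CIR}, Birkhoff's continuous-time ergodic theorem yields $B_T\to 0$ almost surely. Convergence in $L^p$ then follows from a uniform integrability argument: Jensen's inequality gives $\mathbb E(|T^{-1}\int_0^T\varphi(X^*\circ\theta_t)dt|^{p+1})\leqslant\mathbb E(|\varphi(X^*)|^{p+1})<\infty$ by stationarity, so the family $\{|B_T|^p\}_{T\geqslant 1}$ is uniformly integrable.

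For $A_T$, the key input is the fact that $|X_t(x_0)-X^*\circ\theta_t|\to 0$ almost surely and in $L^p$, combined with the uniform continuity and polynomial growth of $\varphi$. Fix $\varepsilon>0$ and choose $\delta>0$ such that $|x-y|<\delta$ implies $|\varphi(x)-\varphi(y)|<\varepsilon$. Pathwise, for $\omega$ in the full-measure set on which Theorem \ref{stationary_solution_CIR} applies, there exists $T_0(\omega)$ beyond which the integrand in $A_T$ is bounded by $\varepsilon$; the contribution of $[0,T_0(\omega)]$ is $O(T^{-1})$ since both $X(x_0)$ and $X^*\circ\theta$ are continuous and locally bounded. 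Letting $\varepsilon\downarrow 0$ gives the almost sure convergence $A_T\to 0$. For $L^p$ convergence, I would use Minkowski's integral inequality to write $\|A_T\|_p\leqslant T^{-1}\int_0^T\|\varphi[X_t(x_0)]-\varphi[X^*\circ\theta_t]\|_p dt$, and then argue that the integrand tends to $0$ as $t\to\infty$: indeed, $X_t(x_0)\to X^*\circ\theta_t$ in $L^q$ for every $q>0$, and the polynomial growth of $\varphi$ together with uniform $L^q$-bounds on $X_t(x_0)$ and $X^*\circ\theta_t$ (obtained from Proposition \ref{solution_estimate}, Fernique's theorem and Theorem \ref{stationary_solution_CIR}) give the required uniform integrability to upgrade almost-sure convergence of $\varphi[X_t(x_0)]-\varphi[X^*\circ\theta_t]$ to $L^p$-convergence. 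Ces\`aro averaging then yields $\|A_T\|_p\to 0$.

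The main technical point is the uniform-integrability bookkeeping needed to convert $X_t(x_0)\to X^*\circ\theta_t$ in $L^q$ into $\varphi[X_t(x_0)]\to\varphi[X^*\circ\theta_t]$ in $L^p$; this is exactly where polynomial growth of $\varphi$ and the $L^p$-integrability of the solution (for every $p$) from Proposition \ref{solution_estimate} and Theorem \ref{stationary_solution_CIR} are used. Once this pointwise-in-$t$ $L^p$ convergence is established, the passage to the Ces\`aro average is routine.
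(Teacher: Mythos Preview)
Your argument is correct and, for the almost-sure convergence, matches the paper exactly: same $A_T+B_T$ decomposition, Birkhoff for $B_T$, and uniform continuity of $\varphi$ together with Theorem~\ref{stationary_solution_CIR} for $A_T$.

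For the $L^p$ convergence the paper takes a shorter route than yours. Rather than treating $A_T$ and $B_T$ separately via Minkowski's integral inequality and pointwise-in-$t$ $L^p$ convergence of the integrand, the paper first deduces from
\[
\|X_t(x_0)\|_q\leqslant\|X_t(x_0)-X^*\circ\theta_t\|_q+\|X^*\|_q
\]
and Theorem~\ref{stationary_solution_CIR} that $\sup_{t\geqslant 0}\|X_t(x_0)\|_q<\infty$ for every $q>0$; polynomial growth of $\varphi$ then yields $\sup_{T>0}\|T^{-1}\int_0^T\varphi[X_t(x_0)]\,dt\|_q<\infty$ for every $q$, and a single application of Vitali's theorem upgrades the already-established almost-sure convergence of the whole average to $L^p$. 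This bypasses the intermediate step of proving $\|\varphi[X_t(x_0)]-\varphi[X^*\circ\theta_t]\|_p\to 0$. Your route also works, but note that Minkowski's integral inequality in the form you use requires $p\geqslant 1$ (for $0<p<1$ the inequality reverses), so a small extra argument is needed there; the paper's Vitali argument covers all $p>0$ uniformly without this caveat.
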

%


%
\begin{proof}
Let $\omega\in\Omega^*$, $x_0\in\mathbb R_+$ and $p > 0$ be arbitrarily chosen. Consider also
\begin{displaymath}
I_T(\varphi,x_0) :=
\frac{1}{T}\int_{0}^{T}\varphi[X_t(x_0)]dt
\textrm{ ; }
\forall T > 0
\end{displaymath}
where $\varphi :\mathbb R_+\rightarrow\mathbb R$ is a uniformly continuous function such that :
\begin{equation}\label{ergodic_theorem_CIR_phi}
\forall x\in\mathbb R_+
\textrm{, }
|\varphi(x)|\leqslant
c(1 + x^n)
\end{equation}
with $c > 0$ and $n\in\mathbb N^*$.
\\
\\
\textbf{Almost sure convergence.} On one hand, since $\varphi$ has a polynomial growth and $X^*$ belongs to $L^p(\Omega,\mathbb P)$ for every $p > 0$ by Theorem \ref{stationary_solution_CIR}, $\varphi(X^*)$ too. Moreover, $(\Omega^*,\mathcal A^*,\mathbb P,\theta)$ is an ergodic metric dynamical system, then by Birkhoff's theorem :
\begin{equation}\label{ergodic_theorem_CIR_1}
\lim_{T\rightarrow\infty}
\frac{1}{T}\int_{0}^{T}\varphi[X^*(\theta_t\omega)]dt =
\mathbb E[\varphi(X^*)].
\end{equation}
On the other hand, by Theorem \ref{stationary_solution_CIR} together with the uniform continuity of $\varphi$, for every $\varepsilon > 0$, there exists $T_0 > 0$ such that :
\begin{displaymath}
\forall t > T_0\textrm{, }
|\varphi[X_t(x_0,\omega)] -\varphi[X^*(\theta_t\omega)]|
\leqslant\frac{\varepsilon}{2}.
\end{displaymath}
Then, for every $T > T_0$,
\begin{displaymath}
\frac{1}{T}
\int_{T_0}^{T}|\varphi[X_t(x_0,\omega)] -\varphi[X^*(\theta_t\omega)]|dt
\leqslant
\frac{\varepsilon}{2}.
\end{displaymath}
\newline
Moreover, there exists $T_1 > T_0$ such that for every $T > T_1$,
\begin{displaymath}
\frac{1}{T}
\int_{0}^{T_0}|\varphi[X_t(x_0,\omega)] -\varphi[X^*(\theta_t\omega)]|dt
\leqslant\frac{\varepsilon}{2}.
\end{displaymath}
So,
\begin{displaymath}
\frac{1}{T}\left|\int_{0}^{T}
[\varphi[X_t(x_0,\omega)] -\varphi[X^*(\theta_t\omega)]]dt\right|
\leqslant
\varepsilon.
\end{displaymath}
Therefore, by definition :
\begin{equation}\label{ergodic_theorem_CIR_2}
\lim_{T\rightarrow\infty}
\frac{1}{T}\int_{0}^{T}
[\varphi[X_t(x_0,\omega)] -\varphi[X^*(\theta_t\omega)]]dt = 0.
\end{equation}
By (\ref{ergodic_theorem_CIR_1}) and (\ref{ergodic_theorem_CIR_2}) together :
\begin{displaymath}
\lim_{T\rightarrow\infty} I_T(\varphi,x_0,\omega) =
\mathbb E[\varphi(X^*)].
\end{displaymath}
\textbf{Convergence in $L^p(\Omega,\mathbb P)$.} For every $t\in\mathbb R_+$ and $q > 0$,
\begin{eqnarray*}
 \|X_t(x_0)\|_q
 & \leqslant &
 \|X_t(x_0) - X^*\circ\theta_t\|_q +
 \|X^*\circ\theta_t\|_q\\
 & = &
 \|X_t(x_0) - X^*\circ\theta_t\|_q +
 \|X^*\|_q.
\end{eqnarray*}
Then, since $\|X_t(x_0) - X^*\circ\theta_t\|_q\rightarrow 0$ when $t$ goes to infinity by Theorem \ref{stationary_solution_CIR} :
\begin{displaymath}
\sup_{t\in\mathbb R_+}
\|X_t(x_0)\|_q < \infty
\textrm{ $;$ }\forall q > 0.
\end{displaymath}
Therefore, by (\ref{ergodic_theorem_CIR_phi}) :
\begin{eqnarray*}
 \sup_{T > 0}\|I_T(\varphi,x_0)\|_p
 & \leqslant &
 \sup_{t\in\mathbb R_+}
 \|\varphi[X_t(x_0)]\|_p\\
 & \leqslant &
 c\left[1 +\sup_{t\in\mathbb R_+}
 \mathbb E^{1/p}[X_{t}^{np}(x_0)]\right] <\infty.
\end{eqnarray*}
In conclusion, by Vitali's theorem :
\begin{displaymath}
\lim_{T\rightarrow\infty}
\|I_T(\varphi,x_0) -\mathbb E[\varphi(X^*)]\|_p = 0.
\end{displaymath}
\end{proof}
%


%
\begin{proposition}\label{hitting_times_x_b}
Under Assumption \ref{drift_assumption}, the equation $b(x) = 0$ has a unique solution $x_b > 0$ such that for every $t_*\in\mathbb R_+$,
\begin{displaymath}
\tau_{t_*}(x_b) :=
\inf\{t > t_* : X_t(x_0) = x_b\} <\infty
\end{displaymath}
almost surely.
\end{proposition}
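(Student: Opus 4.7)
The statement splits into two independent tasks: establishing the existence and uniqueness of $x_b$, then showing that the hitting time is a.s.\ finite. For the first task, I would simply observe that by Assumption \ref{drift_assumption}.(2), $b$ is strictly decreasing on $\mathbb R_+^*$ (since $\dot b < -K < 0$), while $\lim_{x\to 0^+}b(x)=\infty$ is a standing hypothesis and $\lim_{x\to\infty}b(x)=-\infty$ follows exactly as in the proof of Proposition \ref{existence_Euler_scheme} (integrating $\dot b < -K$ gives $b(x)\le b(x_*)-K(x-x_*)$ for $x\ge x_*$). By the intermediate value theorem and strict monotonicity, there is a unique $x_b\in\mathbb R_+^*$ with $b(x_b)=0$.

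For the second task, I would fix $t_*\ge 0$ and argue by contradiction on the event $A:=\{\tau_{t_*}(x_b)=\infty\}$. On $A$, $X_t(x_0)\neq x_b$ for every $t>t_*$, so by path continuity $X_t(x_0)-x_b$ keeps a constant sign on $(t_*,\infty)$; I split $A$ accordingly into $A_+$ and $A_-$. On $A_+$, monotonicity of $b$ gives $b(X_s(x_0))\le b(x_b)=0$ for all $s\ge t_*$, hence from Equation (\ref{main_equation})
\begin{displaymath}
X_t(x_0)\;=\;X_{t_*}(x_0)+\int_{t_*}^{t}b(X_s(x_0))\,ds+\sigma(B_t-B_{t_*})\;\le\;X_{t_*}(x_0)+\sigma(B_t-B_{t_*}).
\end{displaymath}
Combined with $X_t(x_0)>x_b$, this yields $\sigma(B_t-B_{t_*})>x_b-X_{t_*}(x_0)$ for every $t>t_*$, i.e.\ $\sigma(B_t-B_{t_*})$ is bounded below by an (a.s.\ finite) random constant. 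On $A_-$, the same calculation with $b(X_s(x_0))\ge 0$ yields an almost sure upper bound on $\sigma(B_t-B_{t_*})$. In both cases the conclusion contradicts the classical oscillation property of the fractional Brownian motion, namely
\begin{displaymath}
\limsup_{t\to\infty}B_t=+\infty\quad\text{and}\quad\liminf_{t\to\infty}B_t=-\infty\qquad\mathbb P\text{-a.s.},
\end{displaymath}
which follows from the law of the iterated logarithm for fBm (applied to the two-sided process after shifting by $t_*$). Hence $\mathbb P(A_+)=\mathbb P(A_-)=0$, so $\tau_{t_*}(x_b)<\infty$ almost surely.

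The only genuinely delicate point is quoting the correct a.s.\ oscillation result for fBm; everything else is a soft consequence of Assumption \ref{drift_assumption}.(2) and the integral equation. I do not expect to need the ergodic Theorem \ref{stationary_solution_CIR} or Corollary \ref{ergodic_theorem_CIR}, although an alternative proof of Case $A_+$ (resp.\ $A_-$) could be given by applying Corollary \ref{ergodic_theorem_CIR} to a uniformly continuous $\varphi$ localised near $x_b$, provided one knows that $X^*$ charges neighbourhoods of $x_b$; the direct oscillation argument above is shorter and avoids this extra input.
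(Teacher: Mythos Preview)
Your proof is correct and follows essentially the same route as the paper: establish uniqueness of $x_b$ via strict monotonicity of $b$, argue by contradiction by splitting into the cases $X>x_b$ and $X<x_b$ on $(t_*,\infty)$, bound the drift integral by $0$ using $b(x_b)=0$, and reduce to an almost-sure oscillation property of the fractional Brownian motion. The only cosmetic difference is that the paper cites Molchan \cite{MOLCHAN02} for the almost-sure finiteness of level-hitting times of fBm where you invoke the law of the iterated logarithm; either reference closes the argument.
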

%


%
\begin{proof}
By Assumption \ref{drift_assumption}.(1)-(2), the equation $b(x) = 0$ has a unique solution $x_b > 0$ such that $b(x) > 0$ (resp. $b(x) < 0$) for every $x\in ]0,x_b[$ (resp. $x > x_b$). Let $t_*\in\mathbb R_+$ be arbitrarily chosen, and consider $\omega\in\{\tau_{t_*}(x_b) =\infty\}$.
\\
\\
Without loss of generality, assume that $\sigma > 0$.
\\
\\
On one hand, assume that $X_{t_*}(x_0,\omega)\geqslant x_b$. Since $X(x_0,\omega)$ is continuous, $X_s(x_0,\omega) > x_b$ for every $s > t_*$. So, for every $t > t_*$,
\begin{eqnarray*}
 x_b & < &
 X_t(x_0,\omega)\\
 & = &
 X_{t_*}(x_0,\omega) +
 \int_{t_*}^{t}b[X_s(x_0,\omega)]ds +
 \sigma[B_t(\omega) - B_{t_*}(\omega)]\\
 & \leqslant &
 X_{t_*}(x_0,\omega) +\sigma[B_t(\omega) - B_{t_*}(\omega)].
\end{eqnarray*}
However, by Molchan \cite{MOLCHAN02} :
\begin{displaymath}
\inf\{t > 0 : B_t(\omega) = \lambda\} <\infty
\end{displaymath}
for every $\lambda < B_{t_*}(\omega) + 1/\sigma[x_b - X_{t_*}(x_0,\omega)]$. There is a contradiction.
\\
\\
On the other hand, assume that $X_{t_*}(x_0,\omega) < x_b$. Since $X(x_0,\omega)$ is continuous, $X_s(x_0,\omega) < x_b$ for every $s > t_*$. So, for every $t > t_*$,
\begin{eqnarray*}
 x_b & > &
 X_t(x_0,\omega)\\
 & = &
 X_{t_*}(x_0,\omega) +
 \int_{t_*}^{t}b[X_s(x_0,\omega)]ds +
 \sigma[B_t(\omega) - B_{t_*}(\omega)]\\
 & \geqslant &
 X_{t_*}(x_0,\omega) +\sigma[B_t(\omega) - B_{t_*}(\omega)].
\end{eqnarray*}
However, by Molchan \cite{MOLCHAN02} :
\begin{displaymath}
\inf\{t > 0 : B_t(\omega) = \lambda\} <\infty
\end{displaymath}
for every $\lambda > B_{t_*}(\omega) + 1/\sigma[x_b - X_{t_*}(x_0,\omega)]$. There is a contradiction.
\\
\\
Therefore, $\mathbb P[\tau_{t_*}(x_b) =\infty] = 0$. That finishes the proof.
\end{proof}
\noindent
\textbf{Remark.} By Proposition \ref{hitting_times_x_b}, at any time $t_*\in\mathbb R_+$, the stochastic process $X(x_0)$ will hit again $x_b$ on $]t_*,\infty[$. In particular, for almost every $\omega\in\Omega$, if $X(x_0,\omega)$ has a limit when $t$ goes to infinity, it cannot be different from $x_b$.
%


%
\subsection{Absolute continuity of the distribution of the solution}
Let $T > 0$ be arbitrarily fixed, and assume that $B$ is a centered Gaussian process defined on $[0,T]$, with $\alpha$-H\"older continuous paths.
\\
\\
The subsection deals with applications of the Malliavin calculus to the absolute continuity of the distribution of $X_t(x_0)$ for every $t\in ]0,T]$. The reader can refer to Nualart \cite{NUALART06} on Malliavin calculus.
\\
\\
Let $\mathcal H$ be the reproducing kernel Hilbert space of $B$, and consider an orthonormal basis $(h_n)_{n\in\mathbb N}$ of $\mathcal H$. The Wiener integral with respect to $B$, defined on $\mathcal H$, is denoted by $\mathbf B$. The Malliavin derivative associated to $(\Omega,\mathcal A,\mathbb P)$, $\mathcal H$ and $\mathbf B$ is denoted by $\mathbf D$.
\\
\\
Let $\mathcal H^1$ be the Cameron-Martin space of $B$. The map $\mathbf I :\mathcal H\rightarrow\mathcal H^1$ defined by
\begin{displaymath}
\mathbf I_.(h) :=
\mathbb E[\mathbf B(h)B_.]
\textrm{ ; }
\forall h\in\mathcal H,
\end{displaymath}
is an isometry between $\mathcal H$ and $\mathcal H^1$ (see Marie \cite{MARIE_sensitivities}, Lemma 3.4).
\\
\\
\textbf{Notations :}
\begin{itemize}
 \item The domain of the Malliavin derivative is denoted by $\mathbb D^{1,2}$.
 \item Consider a random variable $U :\Omega\rightarrow\mathbb R$ and a normed vector space $\textrm E$ continuously embedded in $\Omega$ ($\textrm E\hookrightarrow\Omega$). For every $\omega\in\Omega$ and $e\in\textrm E$, $U^{\omega}(e) := U(\omega + e)$.
\end{itemize}
Until the end of the subsection, $B$ satisfies the following assumption.
%


%
\begin{assumption}\label{Gaussian_assumption}
$B$ is a centered Gaussian process defined on $[0,T]$, with $\alpha$-H\"older continuous paths, such that :
\begin{enumerate}
 \item The covariance function $\normalfont\textrm R$ of $B$ satisfies $\normalfont\textrm R(t,t) > 0$ for every $t\in ]0,T]$.
 \item $\langle\varphi_1,\psi_1\rangle_{\mathcal H}\geqslant\langle\varphi_2,\psi_2\rangle_{\mathcal H}$ for every $\varphi_1,\varphi_2,\psi_1,\psi_2\in\mathcal H$ such that
 \begin{displaymath}
 \varphi_1(t)\geqslant\varphi_2(t)\geqslant 0\textrm{ and }
 \psi_1(t)\geqslant\psi_2(t)\geqslant 0\textrm{ $;$ }
 \forall t\in [0,T].
 \end{displaymath}
 \item The Cameron-Martin space of $B$ is continuously embedded in $C^{\alpha}([0,T],\mathbb R)$.
\end{enumerate}
\end{assumption}
\noindent
\textbf{Example.} A fractional Brownian motion of Hurst parameter $H\in ]0,1[$ satisfies Assumption \ref{Gaussian_assumption} for every $\alpha\in ]0,H[$ (See Friz and Victoir \cite{FV10}, Section 15.2.2, and Nualart \cite{NUALART06}, Section 5.1.3).
%


%
\begin{proposition}\label{Malliavin_derivative_absolute_continuity}
Under assumptions \ref{drift_assumption} and \ref{Gaussian_assumption}, $X_t(x_0)\in\mathbb D^{1,2}$ and
\begin{displaymath}
\mathbf D_sX_t(x_0) =
\sigma\mathbf 1_{[0,t]}(s)
\exp\left[\int_{s}^{t}\dot b[X_u(x_0)]du\right]
\end{displaymath}
for every $s,t\in [0,T]$. Moreover, the distribution of $X_t(x_0)$ has a density with respect to the Lebesgue measure on $(\mathbb R,\mathcal B(\mathbb R))$ for every $t\in ]0,T]$.
\end{proposition}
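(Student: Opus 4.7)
The strategy is to exploit the Fr\'echet differentiability of the It\^o map proved in Proposition \ref{differentiability_Ito_map} to transfer regularity to the Malliavin derivative, and then invoke the Bouleau-Hirsch criterion for absolute continuity.

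First I would establish that $X_t(x_0)\in\mathbb{D}^{1,2}$. By Assumption \ref{Gaussian_assumption}.(3) the Cameron-Martin space $\mathcal{H}^1$ embeds continuously in $C^{\alpha}([0,T],\mathbb{R})$, so for every $h\in\mathcal{H}$ the shifted path $B+\varepsilon\mathbf{I}(h)$ lies, for $\varepsilon$ small enough, in the neighborhood of $B$ on which $x(\cdot)$ is Fr\'echet differentiable. This yields ray-absolute continuity along every Cameron-Martin direction, together with stochastic G\^ateaux differentiability of $\omega\mapsto X_t(x_0,\omega)$. Square integrability of the directional derivative follows from the uniform bound $|\textrm{D}_{(0,h)}x_t(x_0,w)|\leqslant T\sigma\|h\|_{\infty,T}$ stated in Remark (2) after Proposition \ref{differentiability_Ito_map}, combined with Fernique's theorem applied to the Gaussian variable $\mathbf{I}(h)$. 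The standard characterization of $\mathbb{D}^{1,2}$ via ray-absolute continuity (Nualart \cite{NUALART06}, Proposition 4.1.3) then closes this step.

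Second, I would identify the derivative. By the variation-of-constants representation of the linearized equation given in Remark (2) after Proposition \ref{differentiability_Ito_map}, for every $h\in\mathcal{H}$,
\begin{displaymath}
\left.\frac{d}{d\varepsilon}\right|_{\varepsilon=0}x_t\bigl(x_0,B+\varepsilon\mathbf{I}(h)\bigr)=\sigma\int_0^t\exp\!\left[\int_s^t\dot b[X_u(x_0)]\,du\right]d\mathbf{I}(h)_s.
\end{displaymath}
By definition of the Malliavin derivative the left-hand side equals $\langle\mathbf{D}X_t(x_0),h\rangle_{\mathcal{H}}$. Using the isometry $\mathbf{I}:\mathcal{H}\to\mathcal{H}^1$ (Marie \cite{MARIE_sensitivities}, Lemma 3.4), the right-hand side is the same inner product against the element of $\mathcal{H}$ corresponding to the kernel $s\mapsto\sigma\mathbf{1}_{[0,t]}(s)\exp[\int_s^t\dot b[X_u(x_0)]\,du]$, which is the announced formula.

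Third, for absolute continuity I would apply the Bouleau-Hirsch criterion (Nualart \cite{NUALART06}, Theorem 2.1.3): since $X_t(x_0)\in\mathbb{D}^{1,2}$, it suffices to show $\|\mathbf{D}X_t(x_0)\|_{\mathcal{H}}>0$ almost surely for every $t\in\,]0,T]$. The kernel above is strictly positive and continuous on $[0,t]$, and Proposition \ref{solution_estimate} together with Corollary \ref{extension_Ito_map} and Assumption \ref{drift_assumption}.(1) yield a pathwise lower bound by a positive deterministic function. Assumption \ref{Gaussian_assumption}.(2) then forces the $\mathcal{H}$-norm of $\mathbf{D}X_t(x_0)$ to dominate that of this deterministic function, whose strict positivity in $\mathcal{H}$ is guaranteed by the nondegeneracy condition \ref{Gaussian_assumption}.(1).

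The main obstacle will be the second step, namely converting the directional derivative, naturally expressed as an integral against the continuous path $\mathbf{I}(h)$, into an explicit inner product in the abstract Hilbert space $\mathcal{H}$; for an $H>1/2$ fractional Brownian motion this is routine via the $K_H^*$-transform, but at the generality of Assumption \ref{Gaussian_assumption} it requires care about the realization of $\mathcal{H}$ as a function space. A secondary difficulty is producing an almost-sure deterministic lower bound on $\dot b[X_u(x_0)]$ over $[0,t]$, which involves combining the regularity part of Assumption \ref{drift_assumption}.(1) with the pathwise range control provided by the earlier deterministic estimates.
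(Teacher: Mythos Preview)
Your overall strategy coincides with the paper's: use the continuous embedding $\mathcal H^1\hookrightarrow C^\alpha$ together with Proposition \ref{differentiability_Ito_map} to get $\mathcal H^1$-differentiability, invoke Nualart \cite{NUALART06}, Proposition 4.1.3, compute the derivative explicitly, and conclude by Bouleau--Hirsch. The differences are in execution, and two of them deserve comment.

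\textbf{Identification of the kernel.} You write the directional derivative as a Young integral $\sigma\int_0^t\Phi(t,s)\,d\mathbf I(h)_s$ and then try to recognise this as $\langle\cdot,h\rangle_{\mathcal H}$ via the isometry $\mathbf I$. As you correctly anticipate, making this rigorous at the generality of Assumption \ref{Gaussian_assumption} is delicate. The paper avoids this entirely: expanding in an orthonormal basis $(h_n)$ of $\mathcal H$ and summing, it derives the linear integral equation
\[
\mathbf D_sX_t(x_0)=\int_0^t\dot b[X_u(x_0)]\,\mathbf D_sX_u(x_0)\,du+\sigma\mathbf D_sB_t
\]
directly for $\mathbf D_sX_t$, and then solves this ODE in $t$ using $\mathbf D_sB_t=\mathbf 1_{[0,t]}(s)$. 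This is shorter and sidesteps your ``main obstacle'' altogether.

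\textbf{The lower bound for Bouleau--Hirsch.} You aim for a deterministic positive lower bound on the kernel via Proposition \ref{solution_estimate} and Corollary \ref{extension_Ito_map}, but those results give no control on $\inf_{u\in[0,T]}X_u(x_0)$, and since $\dot b$ is unbounded below near $0$ no deterministic bound is available. The paper does not need one: from $\mathbf D_sX_t(x_0)\geqslant\sigma\mathbf 1_{[0,t]}(s)\exp\!\big[\int_0^T\dot b[X_u(x_0)]\,du\big]$ and the monotonicity Assumption \ref{Gaussian_assumption}.(2) it gets
\[
\|\mathbf DX_t(x_0)\|_{\mathcal H}^2\geqslant\sigma^2\,\mathrm R(t,t)\exp\!\Big[2\int_0^T\dot b[X_u(x_0)]\,du\Big],
\]
which is random but almost surely strictly positive by Assumption \ref{Gaussian_assumption}.(1). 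That suffices.

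A minor point: $\mathbf I(h)\in\mathcal H^1$ is a deterministic path, not a Gaussian random variable, so Fernique's theorem is irrelevant in your first step. The bound $|\mathrm D_{(0,\mathbf I(h))}x_t|\leqslant T\sigma\|\mathbf I(h)\|_{\infty,T}\leqslant CT\sigma\|h\|_{\mathcal H}$ (via the embedding constant) already gives a deterministic bound on $\|\mathbf DX_t(x_0)\|_{\mathcal H}$; the paper obtains the same conclusion from the upper estimate $\|\mathbf DX_t(x_0)\|_{\mathcal H}^2\leqslant\sigma^2\mathrm R(t,t)$ and then invokes Nualart \cite{NUALART06}, Lemma 4.1.2.
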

%


%
\begin{proof}
Let $\omega\in\Omega$ and $s,t\in [0,T]$ be arbitrarily chosen. Since $\mathcal H^1\hookrightarrow C^{\alpha}([0,T],\mathbb R)$ by Assumption \ref{Gaussian_assumption}.(3), by Proposition \ref{differentiability_Ito_map} :
\begin{displaymath}
h\in\mathcal H^1
\longmapsto
X_{t}^{\omega}(x_0,h)
\end{displaymath}
is continuously differentiable. In other words, $X_t(x_0)$ is continuously $\mathcal H^1$-differentiable. Then, by Nualart \cite{NUALART06}, Proposition 4.1.3, $X_t(x_0)$ is locally differentiable in the sense of Malliavin, and
\begin{displaymath}
\langle\mathbf DX_t(x_0)(\omega),h\rangle_{\mathcal H} =
\textrm D_{\mathbf I(h)}X_{t}^{\omega}(x_0,0)
\end{displaymath}
for every $h\in\mathcal H$. By Proposition \ref{differentiability_Ito_map} :
\begin{small}
\begin{eqnarray*}
 \mathbf D_sX_t(x_0)(\omega) & = &
 \sum_{n = 0}^{\infty}
 h_n(s)\langle\mathbf DX_t(x_0)(\omega),h_n\rangle_{\mathcal H}\\
 & = &
 \sum_{n = 0}^{\infty}
 h_n(s)\textrm D_{\mathbf I(h_n)}X_{t}^{\omega}(x_0,0)\\
 & = &
 \sum_{n = 0}^{\infty}
 h_n(s)\left[
 \int_{0}^{t}\dot b[X_u(x_0,\omega)]
 \textrm D_{\mathbf I(h_n)}X_{u}^{\omega}(x_0,0)du +
 \sigma\mathbf I_t(h_n)\right]\\
 & = &
 \int_{0}^{t}
 \dot b[X_u(x_0,\omega)]
 \sum_{n = 0}^{\infty}
 h_n(s)\textrm D_{\mathbf I(h_n)}X_{u}^{\omega}(x_0,0)du +
 \sigma
 \sum_{n = 0}^{\infty}
 h_n(s)\textrm D_{\mathbf I(h_n)}B_{t}^{\omega}(0)\\
 & = &
 \int_{0}^{t}
 \dot b[X_u(x_0,\omega)]
 \textbf D_sX_u(x_0)(\omega)du +
 \sigma\mathbf D_sB_t(\omega).
\end{eqnarray*}
\end{small}
\newline
Since $\mathbf DB_t =\mathbf 1_{[0,t]}$, $\mathbf D_sX_.(x_0)(\omega)$ satisfies
\begin{displaymath}
\mathbf D_sX_t(x_0)(\omega) = \xi +
\int_{0}^{t}
\dot b[X_u(x_0,\omega)]
\mathbf D_sX_u(x_0)(\omega)du
\end{displaymath}
with $\xi = 0$ (resp. $\xi =\sigma$) for $t\in [0,s[$ (resp. $t\in [s,T]$). Then,
\begin{displaymath}
\mathbf D_sX_t(x_0) =
\sigma\mathbf 1_{[0,t]}(s)
\exp\left[\int_{s}^{t}\dot b[X_u(x_0)]du\right].
\end{displaymath}
So, by Assumption \ref{drift_assumption}.(2) :
\begin{equation}\label{estimate_Malliavin_1}
\sigma\mathbf 1_{[0,t]}(s)
\exp\left[\int_{0}^{T}\dot b[X_u(x_0)]du\right]
\leqslant
\mathbf D_sX_t(x_0)
\leqslant
\sigma\mathbf 1_{[0,t]}(s).
\end{equation}
Put $\Gamma_t :=\|\mathbf DX_t(x_0)\|_{\mathcal H}^{2}$. By Assumption \ref{Gaussian_assumption}.(1)-(2) together with Inequality (\ref{estimate_Malliavin_1}) :
\begin{equation}\label{estimate_Malliavin_2}
0 <\sigma^2\textrm R(t,t)
\exp\left[2\int_{0}^{T}\dot b[X_u(x_0)]du\right]
\leqslant\Gamma_t
\leqslant\sigma^2\textrm R(t,t).
\end{equation}
On one hand, by Inequality (\ref{estimate_Malliavin_2}), $\Gamma_t\in L^p(\Omega,\mathbb P)$ for every $p > 0$. So, $X_t(x_0)\in\mathbb D^{1,2}$ by Nualart \cite{NUALART06}, Lemma 4.1.2. On the other hand, by Inequality (\ref{estimate_Malliavin_2}), $\Gamma_t > 0$. So, the distribution of $X_t(x_0)$ has a density with respect to the Lebesgue measure on $(\mathbb R,\mathcal B(\mathbb R))$ by Bouleau-Hirsch's criterion (see Nualart \cite{NUALART06}, Theorem 2.1.3).
\end{proof}
\noindent
\textbf{Notation.} The Ornstein-Uhlenbeck semigroup (resp. operator) is denoted by $\mathbf T := (\mathbf T_t)_{t\in\mathbb R_+}$ (resp. $\mathbf L$). See Nualart \cite{NUALART06}, Section 1.4.
\\
\\
\textbf{Remarks :}
\begin{enumerate}
 \item Let $t\in\mathbb R_+$ be arbitrarily chosen. By Nualart \cite{NUALART06}, Property (i) page 55 :
 \begin{equation}\label{monotonicity_OU_semigroup}
 \forall U\in L^2(\Omega,\mathbb P)
 \textrm{, }
 U\geqslant 0
 \Longrightarrow
 T_t(U)\geqslant 0.
 \end{equation}
 Since $T_t$ is a linear map, (\ref{monotonicity_OU_semigroup}) implies that :
 \begin{displaymath}
 \forall U_1,U_2\in L^2(\Omega,\mathbb P)
 \textrm{, }
 U_1\geqslant U_2
 \Longrightarrow
 T_t(U_1)\geqslant T_t(U_2).
 \end{displaymath}
 \item Let $t\in ]0,T]$ be arbitrarily chosen. Bouleau-Hirsch's criterion is sufficient to prove the absolute continuity of the distribution of $X_t(x_0)$ with respect to the Lebesgue measure on $(\mathbb R,\mathcal B(\mathbb R))$, but not to provide an explicit density. Via the main result of Nourdin and Viens \cite{NV09}, the following proposition provides a density with a suitable expression.
\end{enumerate}
%


%
\begin{proposition}\label{explicit_density}
Under assumptions \ref{drift_assumption} and \ref{Gaussian_assumption}, for every $t\in ]0,T]$,
\begin{displaymath}
\mathbb P_{X_t(x_0)}(dx) =
\frac{\mathbb E[|X_{t}^{*}(x_0)|]}{2g_{X_t(x_0)}(x)}
\exp\left[-\int_{\mathbb E[X_t(x_0)]}^{x}\frac{y -\mathbb E[X_t(x_0)]}{g_{X_t(x_0)}(y)}dy\right]dx
\end{displaymath}
where $X_{t}^{*}(x_0) := X_t(x_0) -\mathbb E[X_t(x_0)]$ and
\begin{displaymath}
g_{X_t(x_0)}(x) :=
\mathbb E[\langle\mathbf DX_t(x_0),-\mathbf D\mathbf L^{-1}X_t(x_0)\rangle_{\mathcal H}|
X_t(x_0) = x]
\end{displaymath}
for every $x > 0$.
\end{proposition}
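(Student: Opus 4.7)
The plan is to apply the density formula of Nourdin and Viens \cite{NV09} to the centered variable $F := X_{t}^{*}(x_0) = X_t(x_0) -\mathbb E[X_t(x_0)]$. Since $X_t(x_0)\in\mathbb D^{1,2}$ by Proposition \ref{Malliavin_derivative_absolute_continuity}, and both $\mathbf D$ and $\mathbf L^{-1}$ annihilate deterministic constants, $F\in\mathbb D^{1,2}$ with $\mathbf DF =\mathbf DX_t(x_0)$ and $\mathbf L^{-1}F =\mathbf L^{-1}X_t(x_0)$. Provided $g_F(F) > 0$ almost surely, the Nourdin--Viens theorem yields
\begin{displaymath}
\rho_F(y) =\frac{\mathbb E[|F|]}{2g_F(y)}\exp\left(-\int_{0}^{y}\frac{z}{g_F(z)}dz\right),
\end{displaymath}
where $g_F(y) =\mathbb E[\langle\mathbf DF,-\mathbf D\mathbf L^{-1}F\rangle_{\mathcal H}\,|\,F = y]$.

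The crucial point is to verify $g_F > 0$. Using the Mehler-type representation
\begin{displaymath}
-\mathbf D\mathbf L^{-1}F =\int_{0}^{\infty}e^{-u}\,\mathbf T_u\mathbf DF\,du,
\end{displaymath}
one gets
\begin{displaymath}
\langle\mathbf DF,-\mathbf D\mathbf L^{-1}F\rangle_{\mathcal H} =\int_{0}^{\infty}e^{-u}\langle\mathbf DX_t(x_0),\mathbf T_u\mathbf DX_t(x_0)\rangle_{\mathcal H}\,du.
\end{displaymath}
By Inequality (\ref{estimate_Malliavin_1}), $\mathbf D_sX_t(x_0)$ dominates a strictly positive multiple of $\mathbf 1_{[0,t]}(s)$ pathwise. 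The monotonicity of $\mathbf T_u$ stated in (\ref{monotonicity_OU_semigroup}) preserves that lower bound after application of the semigroup, and Assumption \ref{Gaussian_assumption}.(2) ensures that the $\mathcal H$-inner product of two nonnegative elements dominates the inner product of their lower bounds. Combining these three ingredients gives a deterministic strictly positive lower bound for the integrand, hence for $\langle\mathbf DF,-\mathbf D\mathbf L^{-1}F\rangle_{\mathcal H}$, and therefore for $g_F(y)$ at every $y$ in the support of $F$.

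Once $g_F > 0$ is secured, the formula of the statement follows by translation. Writing $X_t(x_0) = F + m$ with $m :=\mathbb E[X_t(x_0)]$, the shift-invariance of $\mathbf D$ and $\mathbf L^{-1}$ yields $g_{X_t(x_0)}(x) = g_F(x - m)$, so $\rho_{X_t(x_0)}(x) =\rho_F(x - m)$ becomes, after the change of variable $y = z + m$ inside the exponential,
\begin{displaymath}
\rho_{X_t(x_0)}(x) =\frac{\mathbb E[|X_{t}^{*}(x_0)|]}{2g_{X_t(x_0)}(x)}\exp\left(-\int_{m}^{x}\frac{y - m}{g_{X_t(x_0)}(y)}dy\right),
\end{displaymath}
which is precisely the claimed density. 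The main obstacle is the positivity step, because it must be carried out conditionally on $F = y$ for almost every $y$; the translation and change of variable are then routine.
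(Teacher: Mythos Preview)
Your argument is essentially the paper's: both use the Mehler representation $-\mathbf D\mathbf L^{-1}F=\int_0^\infty e^{-u}\mathbf T_u(\mathbf DF)\,du$, combine the pathwise lower bound from (\ref{estimate_Malliavin_1}) with the positivity of $\mathbf T_u$ in (\ref{monotonicity_OU_semigroup}) and the monotonicity of $\langle\cdot,\cdot\rangle_{\mathcal H}$ from Assumption \ref{Gaussian_assumption}.(2), deduce $g_{X_t^*(x_0)}(X_t^*(x_0))>0$ a.s., invoke Nourdin--Viens, and then shift by $\mathbb E[X_t(x_0)]$. One small correction: the lower bound you obtain is not \emph{deterministic}, since the factor $\exp\big[\int_0^T\dot b(X_u(x_0))\,du\big]$ in (\ref{estimate_Malliavin_1}) is random and unbounded below; what you actually get (and what the paper gets) is a \emph{random, almost surely strictly positive} lower bound on $\langle\mathbf DF,-\mathbf D\mathbf L^{-1}F\rangle_{\mathcal H}$, which is exactly what the Nourdin--Viens criterion requires.
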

%


%
\begin{proof}
Let $t\in ]0,T]$ and $s\in [0,T]$ be arbitrarily chosen. By Nourdin and Viens \cite{NV09}, Proposition 3.7, Inequality (\ref{estimate_Malliavin_1}) and (\ref{monotonicity_OU_semigroup}) :
\begin{eqnarray*}
 -\mathbf D_s\mathbf L^{-1}X_t(x_0) & = &
 \int_{0}^{\infty}
 e^{-u}\mathbf T_u[\mathbf D_sX_t(x_0)]du\\
 & \geqslant &
 \sigma\mathbf 1_{[0,t]}(s)
 \int_{0}^{\infty}e^{-u}\mathbf T_u\left[\exp\left[\int_{0}^{T}\dot b[X_v(x_0)]dv\right]\right]du.
\end{eqnarray*}
Then, by Assumption \ref{Gaussian_assumption}.(1)-(2) together with Inequality (\ref{estimate_Malliavin_1}) :
\begin{eqnarray*}
 \langle\mathbf DX_t(x_0),-\mathbf D\mathbf L^{-1}X_t(x_0)\rangle_{\mathcal H}
 & \geqslant &
 \sigma^2\textrm R(t,t)
 \exp\left[\int_{0}^{T}\dot b[X_v(x_0)]dv\right]\times\\
 & &
 \int_{0}^{\infty}e^{-u}\mathbf T_u\left[\exp\left[\int_{0}^{T}\dot b[X_v(x_0)]dv\right]\right]du > 0.
\end{eqnarray*}
So,
\begin{eqnarray*}
 g_{X_{t}^{*}(x_0)}[X_{t}^{*}(x_0)] & := &
 \mathbb E[\langle\mathbf DX_{t}^{*}(x_0),-\mathbf D\mathbf L^{-1}X_{t}^{*}(x_0)\rangle_{\mathcal H}|
 X_{t}^{*}(x_0)]\\
 & = &
 \mathbb E[\langle\mathbf DX_t(x_0),-\mathbf D\mathbf L^{-1}X_t(x_0)\rangle_{\mathcal H}|
 X_{t}^{*}(x_0)] > 0.
\end{eqnarray*}
Therefore, by Nourdin and Viens \cite{NV09}, Theorem 3.1 :
\begin{displaymath}
\mathbb P_{X_{t}^{*}(x_0)}(dx) =
\frac{\mathbb E[|X_{t}^{*}(x_0)|]}{2g_{X_{t}^{*}(x_0)}(x)}
\exp\left[-\int_{0}^{x}\frac{y}{g_{X_{t}^{*}(x_0)}(y)}dy\right]dx.
\end{displaymath}
Together with a straightforward application of the transfer theorem, that finishes the proof.
\end{proof}
%


%
\subsection{Integrability and convergence of the implicit Euler scheme}
Let $T > 0$ be arbitrarily fixed, and assume that $B$ is a centered Gaussian process defined on $[0,T]$, with $\alpha$-H\"older continuous paths.
\\
\\
Let $n\in\mathbb N^*$ be arbitrarily chosen, and consider the dissection $(t_{0}^{n},t_{1}^{n},\dots,t_{n}^{n})$ of $[0,T]$ such that $t_{k}^{n} := kT/n$ for every $k\in\{0,\dots,n\}$. Consider also the stochastic process $X^n(x_0) := (X_{t}^{n}(x_0))_{t\in [0,T]}$ such that for every $\omega\in\Omega$, $X^n(x_0,\omega)$ is the step-$n$ implicit Euler scheme associated to Equation (\ref{main_equation_deterministic}) driven by $B(\omega)$ and to the dissection $(t_{0}^{n},t_{1}^{n},\dots,t_{n}^{n})$.
%


%
\begin{proposition}\label{probabilistic_convergence_Euler}
Under Assumption \ref{drift_assumption} :
\begin{displaymath}
\|X^n(x_0) - X(x_0)\|_{\infty,T}
\xrightarrow[n\rightarrow\infty]{\textrm{a.s.}} 0
\end{displaymath}
with rate of convergence $\normalfont{\textrm O}(n^{-\alpha})$. Moreover, for every $p > 0$,
\begin{displaymath}
\sup_{n\in\mathbb N^*}\|X^n(x_0)\|_{\infty,T}\in
L^p(\Omega,\mathbb P)
\end{displaymath}
and
\begin{displaymath}
\lim_{n\rightarrow\infty}\mathbb E[
\|X^n(x_0) - X(x_0)\|_{\infty,T}^{p}] = 0.
\end{displaymath}
\end{proposition}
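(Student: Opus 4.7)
The strategy is to transfer the pathwise results of Section 2.3 to the Gaussian setting via Fernique's theorem for integrability.

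\textbf{Step 1: almost sure convergence with rate $\textrm{O}(n^{-\alpha})$.} For almost every $\omega \in \Omega$, the path $B(\omega)$ belongs to $C^{\alpha}([0,T],\mathbb{R})$, and Proposition \ref{existence_solution} gives a strictly positive continuous solution $X(x_0,\omega)$ on $[0,T]$. Consequently $x_*(\omega) := \inf_{t\in[0,T]} X_t(x_0,\omega) > 0$ and $x^*(\omega) := \sup_{t\in[0,T]} X_t(x_0,\omega) < \infty$. Since $b$ has bounded derivatives on $[x_*(\omega), x^*(\omega)]$ by Assumption \ref{drift_assumption}.(1), Theorem \ref{convergence_Euler_scheme} applies pathwise and provides an almost surely finite random constant $C(\omega)$ such that $\|X^n(x_0,\omega) - X(x_0,\omega)\|_{\infty,T} \leqslant C(\omega) n^{-\alpha}$, which proves the almost sure convergence with the announced rate.

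\textbf{Step 2: uniform integrability of the scheme.} Applied pathwise, Proposition \ref{estimate_Euler_scheme} yields
\[
\max_{k = 0}^{n} X_{k}^{n}(x_0,\omega) \leqslant x_0 + |b(x_0)|T + 2\sigma \|B(\omega)\|_{\infty,T}.
\]
Since $X^n(x_0,\omega)$ is defined as the piecewise linear interpolation of its nodal values, the same bound controls $\|X^n(x_0,\omega)\|_{\infty,T}$ uniformly in $n$. Fernique's theorem ensures $\|B\|_{\infty,T} \in L^p(\Omega,\mathbb{P})$ for every $p > 0$, hence
\[
\sup_{n\in\mathbb{N}^*} \|X^n(x_0)\|_{\infty,T} \in L^p(\Omega,\mathbb{P}) \textrm{ ; } \forall p > 0.
\]

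\textbf{Step 3: convergence in $L^p(\Omega,\mathbb{P})$.} Combining the almost sure limit from Step 1 with the triangle-type bound
\[
\|X^n(x_0) - X(x_0)\|_{\infty,T}^{p} \leqslant c_p \left( \sup_{n\in\mathbb{N}^*} \|X^n(x_0)\|_{\infty,T}^{p} + \|X(x_0)\|_{\infty,T}^{p} \right),
\]
the right-hand side is an $n$-independent integrable random variable: the first term by Step 2, the second by the Fernique-based integrability of $\|X(x_0)\|_{\infty,T}$ recalled at the start of Section 3. Lebesgue's dominated convergence theorem then delivers the $L^p$ limit.

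The main obstacle is conceptual rather than computational: one must check that the piecewise linear interpolation $t \mapsto X_t^n(x_0)$ inherits the maximum nodal bound of Proposition \ref{estimate_Euler_scheme}, and that Theorem \ref{convergence_Euler_scheme} can be invoked pathwise in spite of the $\omega$-dependence of the interval $[x_*(\omega), x^*(\omega)]$ on which $\dot b$ and $b$ are controlled. Once this pathwise transfer is made, Fernique's theorem handles the whole probabilistic component, and no additional moment estimate on the scheme is required.
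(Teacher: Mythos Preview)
Your argument is correct and follows the paper's proof almost verbatim: pathwise application of Theorem \ref{convergence_Euler_scheme} for the almost sure rate, Proposition \ref{estimate_Euler_scheme} plus Fernique for the uniform $L^p$ bound on the scheme. The only difference is in Step 3, where the paper invokes Vitali's theorem (the uniform $L^p$ bound for every $p$ gives uniform integrability of $\|X^n(x_0)-X(x_0)\|_{\infty,T}^{p}$), whereas you produce an explicit dominating random variable and apply Lebesgue's theorem. Both routes are standard and equivalent here; your domination argument is slightly more concrete, while Vitali avoids writing down the dominating function.
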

%


%
\begin{proof}
By Theorem \ref{convergence_Euler_scheme}, for every $\omega\in\Omega$,
\begin{displaymath}
\|X^n(x_0,\omega) - X(x_0,\omega)\|_{\infty,T}
\xrightarrow[n\rightarrow\infty]{\textrm{a.s.}} 0
\end{displaymath}
with rate of convergence $\normalfont{\textrm O}(n^{-\alpha})$.
\\
\\
Let $p > 0$ be arbitrarily chosen. By Proposition \ref{estimate_Euler_scheme} together with Fernique's theorem :
\begin{displaymath}
\sup_{n\in\mathbb N^*}\|X^n(x_0)\|_{\infty,T}\in
L^p(\Omega,\mathbb P).
\end{displaymath}
So, by Vitali's theorem :
\begin{displaymath}
\lim_{n\rightarrow\infty}\mathbb E[
\|X^n(x_0) - X(x_0)\|_{\infty,T}^{p}] = 0.
\end{displaymath}
\end{proof}
%


%
\subsection{Estimation of parameters}
The subsection deals with the estimation of the Hurst parameter and of the volatility constant of Equation (\ref{main_equation}) by using a transformation of the observations of $X(x_0)$ and already known estimators of the Hurst parameter and of the volatility constant of the fractional Ornstein-Uhlenbeck process.
\\
\\
Under Assumption \ref{drift_assumption}, for every $y_0\in\mathbb R$, let $Y(y_0) := (Y_t(y_0))_{t\in\mathbb R_+}$ be the solution of the following Langevin equation :
\begin{equation}\label{Langevin_equation}
Y_t =
y_0 - R\int_{0}^{t}Y_sds +
\sigma B_t.
\end{equation}
On the fractional Ornstein-Uhlenbeck process, see Cheridito et al. \cite{CKM03}.
%


%
\begin{proposition}\label{relationship_OU_CIR}
Under Assumption \ref{drift_assumption}, for every $x_0 > 0$, $y_0\in\mathbb R$ and $t\in\mathbb R_+$,
\begin{displaymath}
Y_t(y_0) =
X_t(x_0) - (x_0 - y_0)e^{-Rt} -
\int_{0}^{t}e^{-R(t - s)}b^R[X_s(x_0)]ds
\end{displaymath}
where $b^R(x) := b(x) + Rx$ for every $x > 0$.
\end{proposition}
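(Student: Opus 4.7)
The plan is to apply the classical variation-of-constants trick to both $Y(y_0)$ and $X(x_0)$ using the rough change of variable formula of Gubinelli and Lejay (Lemma 6), which was already invoked in the proof of Proposition \ref{existence_solution}. Since both equations are driven by the same additive signal $\sigma B$, representing each process through the integrating factor $e^{Rt}$ and subtracting will produce exact cancellation of the stochastic integrals against $B$.

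First I would apply the rough product rule to the process $t\mapsto e^{Rt}Y_t(y_0)$. Combined with Equation (\ref{Langevin_equation}), the drift terms involving $RY_s$ cancel, leaving
\begin{displaymath}
e^{Rt}Y_t(y_0) = y_0 + \sigma\int_0^t e^{Rs}dB_s
\end{displaymath}
and hence, after multiplication by $e^{-Rt}$,
\begin{displaymath}
Y_t(y_0) = y_0 e^{-Rt} + \sigma\int_0^t e^{-R(t-s)}dB_s.
\end{displaymath}

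Second, I would rewrite Equation (\ref{main_equation}) by splitting the drift as $b(x) = -Rx + b^R(x)$, so that $X(x_0)$ satisfies a Langevin-type equation perturbed by $b^R[X_s(x_0)]$. Exactly the same change of variable applied to $t\mapsto e^{Rt}X_t(x_0)$ then yields
\begin{displaymath}
X_t(x_0) = x_0 e^{-Rt} + \int_0^t e^{-R(t-s)}b^R[X_s(x_0)]ds + \sigma\int_0^t e^{-R(t-s)}dB_s.
\end{displaymath}
Subtracting the two representations causes the Young integrals against $B$ to vanish, and rearranging delivers exactly the claimed identity.

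The only technical point is justifying the Young integral $\int_0^t e^{Rs}dB_s$ and the application of the rough change of variable formula; but since $s\mapsto e^{Rs}$ is smooth and $B$ is $\alpha$-H\"older, this is standard and was already carried out in the proof of Proposition \ref{existence_solution} (where $w^R$ was introduced in the same way). I therefore do not expect any serious obstacle beyond a careful bookkeeping of signs and integrating factors.
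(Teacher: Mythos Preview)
Your argument is correct. The paper's proof, however, takes a slightly more direct route: it introduces the difference $\Delta_t(x_0,y_0):=X_t(x_0)-Y_t(y_0)$, observes that the additive noise $\sigma B$ cancels \emph{at the level of the equation}, so that $\Delta$ satisfies the purely deterministic linear ODE
\[
\Delta_t = (x_0-y_0) - R\int_0^t \Delta_s\,ds + \int_0^t b^R[X_s(x_0)]\,ds,
\]
and then solves this by the usual integrating factor. In contrast, you apply the integrating factor to $Y$ and $X$ separately, which forces you to write and justify the Young integral $\int_0^t e^{Rs}\,dB_s$ before it cancels upon subtraction. Both approaches are standard variation-of-constants arguments; the paper's is marginally cleaner because it never sees the stochastic integral, while yours has the side benefit of producing explicit mild-solution representations of $X(x_0)$ and $Y(y_0)$ individually.
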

%


%
\begin{proof}
Let $t\in\mathbb R_+$, $x_0 > 0$ and $y_0\in\mathbb R$ be arbitrarily chosen.
\\
\\
The stochastic process $\Delta(x_0,y_0) := X(x_0) - Y(y_0)$ satisfies :
\begin{eqnarray*}
 \Delta_t(x_0,y_0) & = &
 x_0 - y_0 +\int_{0}^{t}[b[X_s(x_0)] + RY_s(y_0)]ds\\
 & = &
 x_0 - y_0 -
 R\int_{0}^{t}\Delta_s(x_0,y_0)ds +
 \int_{0}^{t}b^R[X_s(x_0)]ds.
\end{eqnarray*}
Then,
\begin{displaymath}
\Delta_t(x_0,y_0) =
(x_0 - y_0)e^{-Rt} +
\int_{0}^{t}e^{-R(t - s)}b^R[X_s(x_0)]ds.
\end{displaymath}
That finishes the proof.
\end{proof}
\noindent
Until the end of the subsection, $B$ is a fractional Brownian motion of Hurst parameter $H\in ]0,1[$ ($\alpha\in ]0,H[$). The values of all the parameters involving in the expression of the drift function $b$ are known.
\\
\\
Consider the map $\Theta$ from $C^0(\mathbb R_+,]0,\infty[)$ into $C^0(\mathbb R_+,\mathbb R)$ such that :
\begin{displaymath}
\Theta(\varphi)(t) :=
\varphi(t) -
\int_{0}^{t}e^{-R(t - s)}b^R[\varphi(s)]ds
\textrm{ ; }
\forall t\in\mathbb R_+,
\end{displaymath}
for every $\varphi\in C^0(\mathbb R_+,]0,\infty[)$. By Proposition \ref{relationship_OU_CIR} :
\begin{displaymath}
Y(x_0) =
\Theta[X(x_0)].
\end{displaymath}
Since the parameter $(H,\sigma)$ doesn't involve in the expression of the map $\Theta$, an observation $x(x_0)$ of $X(x_0)$ provides an observation of $Y(x_0)$ by applying the transformation $\Theta$ to $x(x_0)$. So, since Equation (\ref{main_equation}) has the same additive noise $\sigma B$ than Equation (\ref{Langevin_equation}), a consistent estimator of $(H,\sigma)$ for the fractional Ornstein-Uhlenbeck process $Y(x_0)$ provides an estimation of the real value of $(H,\sigma)$ from the observation $y(x_0) :=\Theta[x(x_0)]$.
\\
For $H\in ]1/2,1[$, there are several papers dealing with the estimation of the Hurst parameter and of the volatility constant of the fractional Ornstein-Uhlenbeck process. Some estimators use the whole path of $Y(x_0)$ (see Berzin and Le\'on \cite{BL08}), and some estimators use discrete observations of $Y(x_0)$ (see Melichov \cite{MELICHOV11} or Brouste and Iacus \cite{BI13}).
\\
\\
In order to get an observation of $Y(x_0)$ at the time $t\in\mathbb R_+$ from $y(x_0)$, the whole path $x(x_0)$ has to be known until the time $t$ by construction of the map $\Theta$. In practice, only discrete observations of $X(x_0)$ are available. So, with the same arguments, the end of the subsection deals with a consistent estimator of $H$ for $X^n(x_0)$ instead of $X(x_0)$.
\\
Under Assumption \ref{drift_assumption}, let $Y^n(x_0)$ be the step-$n$ implicit Euler scheme associated to Equation (\ref{Langevin_equation}) and to the dissection $(t_{0}^{n},t_{1}^{n},\dots,t_{n}^{n})$ of $[0,T]$ :
\begin{equation}\label{Langevin_implicit_Euler}
Y_{k + 1}^{n} =
Y_{k}^{n} - RY_{k + 1}^{n}(t_{k + 1}^{n} - t_{k}^{n}) +
\sigma(B_{t_{k + 1}^{n}} - B_{t_{k}^{n}})
\end{equation}
with $Y_{0}^{n}(x_0) := x_0$. On the implicit Euler schemes associated to the fractional Langevin equation, see Garrido-Atienza et al. \cite{GAKN09}.
%


%
\begin{proposition}\label{relationship_Euler_OU_CIR}
Under Assumption \ref{drift_assumption}, for every $k\in\{1,\dots,n\}$,
\begin{displaymath}
Y_{k}^{n}(x_0) =
X_{k}^{n}(x_0) -
Tn^{-1}
\sum_{i = 1}^{k}
(1 + RTn^{-1})^{i - 1 - k}b_R[X_{i}^{n}(x_0)].
\end{displaymath}
\end{proposition}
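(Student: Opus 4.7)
The plan is to mimic, at the discrete level, the derivation of Proposition \ref{relationship_OU_CIR}: isolate $\sigma(B_{t_{k+1}^n}-B_{t_k^n})$ from the two implicit Euler schemes, substitute, and solve a linear first-order recursion for the difference process $\Delta_k^n := X_k^n(x_0) - Y_k^n(x_0)$.

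First I would rewrite Equation (\ref{Langevin_implicit_Euler}) as
\begin{displaymath}
Y_{k+1}^n = (1 + RT/n)^{-1}\bigl[Y_k^n + \sigma(B_{t_{k+1}^n} - B_{t_k^n})\bigr],
\end{displaymath}
which is an explicit recursion since $R$ is constant. Next, from Equation (\ref{Euler_scheme}) applied to $X^n(x_0)$, I would express the noise increment as
\begin{displaymath}
\sigma(B_{t_{k+1}^n} - B_{t_k^n}) = X_{k+1}^n - X_k^n - (T/n)\, b(X_{k+1}^n),
\end{displaymath}
and substitute it into the previous identity. Forming $\Delta_{k+1}^n = X_{k+1}^n - Y_{k+1}^n$, the terms involving $X_{k+1}^n$ combine (using $1 - (1+RT/n)^{-1} = (RT/n)(1+RT/n)^{-1}$) to produce the linear recursion
\begin{displaymath}
\Delta_{k+1}^n = (1+RT/n)^{-1}\Delta_k^n + (T/n)(1+RT/n)^{-1}\bigl[R X_{k+1}^n + b(X_{k+1}^n)\bigr],
\end{displaymath}
that is, $\Delta_{k+1}^n = (1+RT/n)^{-1}\Delta_k^n + (T/n)(1+RT/n)^{-1} b_R(X_{k+1}^n)$, with initial value $\Delta_0^n = 0$.

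Finally, I would solve this scalar recursion by induction on $k$. Iterating gives
\begin{displaymath}
\Delta_k^n = (T/n) \sum_{i=1}^{k} (1+RT/n)^{-(k-i+1)} b_R(X_i^n) = (T/n) \sum_{i=1}^{k} (1+RT/n)^{i-1-k} b_R(X_i^n),
\end{displaymath}
from which the claimed formula for $Y_k^n(x_0) = X_k^n(x_0) - \Delta_k^n$ follows immediately.

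The only delicate point is the algebraic simplification that turns the coefficient of $X_{k+1}^n$ in the recursion for $\Delta_{k+1}^n$ into precisely $(T/n)(1+RT/n)^{-1} R$, so that it merges with the $b(X_{k+1}^n)$ term to form $b_R(X_{k+1}^n)$; everything else is bookkeeping. No regularity or Assumption \ref{drift_assumption} is needed beyond the fact, guaranteed by Proposition \ref{existence_Euler_scheme}, that $X_i^n > 0$ for all $i$, so that $b_R(X_i^n)$ is well defined.
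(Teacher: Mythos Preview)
Your proposal is correct and follows essentially the same approach as the paper: both define $\Delta_k^n := X_k^n(x_0) - Y_k^n(x_0)$, derive the linear recursion $\Delta_{k+1}^n = (1+RT/n)^{-1}\Delta_k^n + (T/n)(1+RT/n)^{-1} b_R(X_{k+1}^n)$ with $\Delta_0^n = 0$, and then iterate. The only cosmetic difference is that you first isolate the noise increment and substitute, whereas the paper subtracts the two schemes directly before rearranging.
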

%


%
\begin{proof}
Let $k\in\{1,\dots,n\}$ be arbitrarily chosen.
\\
\\
The stochastic process $\Delta^n(x_0) := X^n(x_0) - Y^n(x_0)$ satisfies :
\begin{eqnarray*}
 \Delta_{k + 1}^{n}(x_0) & = &
 X_{k}^{n}(x_0) - Y_{k}^{n}(x_0) +
 Tn^{-1}[b[X_{k + 1}^{n}(x_0)] + RY_{k + 1}^{n}(x_0)]\\
 & = &
 \Delta_{k}^{n}(x_0) +
 Tn^{-1}[b_R[X_{k + 1}^{n}(x_0)] - R\Delta_{k + 1}^{n}(x_0)].
\end{eqnarray*}
Then,
\begin{displaymath}
\Delta_{k + 1}^{n}(x_0) =
(1 + RTn^{-1})^{-1}\Delta_{k}^{n}(x_0) +
(1 + RTn^{-1})^{-1}b_R[X_{k}^{n}(x_0)].
\end{displaymath}
So,
\begin{displaymath}
\Delta_{k}^{n}(x_0) =
Tn^{-1}
\sum_{i = 1}^{k}
(1 + RTn^{-1})^{i - 1 - k}b_R[X_{i}^{n}(x_0)].
\end{displaymath}
That finishes the proof.
\end{proof}
\noindent
Consider the map $\Theta^n$ from $]0,\infty[^{\mathbb N}$ into $\mathbb R^{\mathbb N}$ such that :
\begin{displaymath}
\Theta_{k}^{n}(u) :=
\left\{
\begin{array}{rcl}
 u_0 & \textrm{if} & k = 0\\
 \displaystyle{u_k -
 Tn^{-1}
 \sum_{i = 1}^{k}
 (1 + RTn^{-1})^{i - 1 - k}b_R(u_i)}
 & \textrm{if} &
 k\in\{1,\dots,n\}
\end{array}
\right.
\end{displaymath}
for every $u\in]0,\infty[^{\mathbb N}$. By Proposition \ref{relationship_Euler_OU_CIR} :
\begin{displaymath}
Y^n(x_0) =
\Theta^n[X^n(x_0)].
\end{displaymath}
As for the continuous time models, the transformation $\Theta^n$ provides an observation of $Y^n(x_0)$ from an observation of $X^n(x_0)$.
%


%
\begin{definition}\label{k_quadratic_variation}
Consider a stochastic process $W := (W_t)_{t\in\mathbb R_+}$ on $(\Omega,\mathcal A,\mathbb P)$, $\normalfont{\textrm I}_n\subset\{0,\dots,n\}$, and $k\in\mathbb N$ such that $k +\max\normalfont{\textrm I}_n\leqslant n$. The $k$-quadratic variation of $W$ with respect to the dissection $(t_{0}^{n},t_{1}^{n},\dots,t_{n}^{n})$ and to the index set $\normalfont{\textrm I}_n$ is
\begin{displaymath}
\normalfont{\textrm V}_{\normalfont{\textrm I}_n,k}(W) :=
\sum_{i\in\normalfont{\textrm I}_n}
(\Delta_kW)_{i}^{2}
\end{displaymath}
with $(\Delta_kW)_i := W_{t_{i + k}^{n}} - W_{t_{i}^{n}}$ for every $i\in\normalfont{\textrm I}_n$.
\end{definition}
%


%
\begin{proposition}\label{convergence_quadratic_variation}
Consider $\normalfont{\textrm I}_n\subset\{0,\dots,n\}$, and $k\in\mathbb N$ such that $k +\max\normalfont{\textrm I}_n\leqslant n$.
\begin{displaymath}
|\normalfont{\textrm V}_{\normalfont{\textrm I}_n,k}[Y^n(x_0)] -
\normalfont{\textrm V}_{\normalfont{\textrm I}_n,k}[Y(x_0)]|
\xrightarrow[n\rightarrow\infty]{} 0
\end{displaymath}
almost surely and in $L^p(\Omega,\mathbb P)$ for every $p > 0$.
\end{proposition}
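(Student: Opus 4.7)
The plan is to factor the difference of quadratic variations via $a^2-b^2=(a-b)(a+b)$:
\begin{displaymath}
\textrm V_{\textrm I_n,k}[Y^n(x_0)] - \textrm V_{\textrm I_n,k}[Y(x_0)] =
\sum_{i\in\textrm I_n}[(\Delta_k Y^n(x_0))_i - (\Delta_k Y(x_0))_i]\cdot[(\Delta_k Y^n(x_0))_i + (\Delta_k Y(x_0))_i],
\end{displaymath}
and then to bound the two factors separately, exploiting the linearity of the Langevin equation. For the ``sum'' factor, I would prove the uniform-in-$n$ H\"older-type estimate $|(\Delta_k Y^n(x_0))_i|+|(\Delta_k Y(x_0))_i|\leqslant C(\omega)(kT/n)^\alpha$. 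The bound on $Y(x_0)$ is immediate from its $\alpha$-H\"older regularity inherited from $B$. For $Y^n(x_0)$, solving the linear recurrence (\ref{Langevin_implicit_Euler}) explicitly gives $\|Y^n(x_0)\|_{\infty,T}\leqslant|x_0|+2\sigma\|B\|_{\infty,T}$, after which the splitting
\begin{displaymath}
Y^n_{i+k}(x_0) - Y^n_i(x_0) = -\frac{RT}{n}\sum_{j=i}^{i+k-1}Y^n_{j+1}(x_0) + \sigma(B_{t_{i+k}^n}-B_{t_i^n})
\end{displaymath}
yields the claim, the first summand being $O(kT/n)\leqslant T^{1-\alpha}(kT/n)^\alpha$ (since $kT/n\leqslant T$ and $\alpha<1$) and the second being $O((kT/n)^\alpha)$ by the H\"older regularity of $B$.

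For the ``difference'' factor, setting $e_j:=Y^n_j(x_0)-Y_{t_j^n}(x_0)$ and subtracting the discrete and continuous Langevin identities produces the linear recurrence $(1+RT/n)e_j = e_{j-1}+\delta_j$ with $e_0=0$ and one-step quadrature defect
\begin{displaymath}
\delta_j := R\int_{t_{j-1}^n}^{t_j^n}(Y_s(x_0)-Y_{t_j^n}(x_0))\,ds = O((T/n)^{1+\alpha})
\end{displaymath}
by the H\"older regularity of $Y(x_0)$. Writing $\rho:=(1+RT/n)^{-1}$ and solving gives $e_j = \sum_{l=1}^j\rho^{j-l+1}\delta_l$; together with the splitting
\begin{displaymath}
e_{i+k} - e_i = \sum_{l=i+1}^{i+k}\rho^{i+k-l+1}\delta_l + (\rho^k-1)e_i,
\end{displaymath}
the elementary bound $|\rho^k-1|\leqslant kRT/n$ and the crude estimate $\|e\|_{\infty,T}=O(n^{-\alpha})$ deduced from the same formula, one obtains the refined bound $|e_{i+k}-e_i|\leqslant C(\omega)kn^{-1-\alpha}$.

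Combining both estimates over the $|\textrm I_n|\leqslant n$ terms yields $|\textrm V_{\textrm I_n,k}[Y^n(x_0)] - \textrm V_{\textrm I_n,k}[Y(x_0)]| = O(k^{1+\alpha}n^{-2\alpha})\to 0$ almost surely. The $L^p$-convergence follows because the random constants above are polynomial in $\|B\|_{\infty,T}$ and $\|B\|_{\alpha,T}$, hence lie in every $L^p(\Omega,\mathbb P)$ by Fernique's theorem. The main difficulty is precisely the refinement $|e_{i+k}-e_i|=O(kn^{-1-\alpha})$: the naive bound $|e_{i+k}-e_i|\leqslant 2\|e\|_{\infty,T}=O(n^{-\alpha})$, combined with the sum-factor estimate and the $n$ summands, would only yield $|\textrm V^n-\textrm V|=O(n^{1-\alpha})$, which diverges; the extra $k/n$ factor in the sharper estimate is what saves the argument and relies crucially on the telescoping structure of the explicit formula for $e_j$, itself a consequence of the linearity of the Langevin drift.
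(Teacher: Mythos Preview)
Your argument is correct and structurally close to the paper's, but with two genuine differences worth noting.

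Both proofs start from $a^2-b^2=(a-b)(a+b)$ and both exploit the crucial cancellation of the noise increments in $(\Delta_kY^n)_i-(\Delta_kY)_i$ coming from the linear Langevin drift. The paper, however, bounds the ``sum'' factor crudely by $2Y^*:=\|Y(x_0)\|_{\infty,T}+\sup_n\|Y^n(x_0)\|_{\infty,T}$, telescopes the ``difference'' factor into single-step errors, and invokes Garrido--Atienza et al.\ \cite{GAKN09} for the global Euler bound $\|Y^n-Y\|_{\infty,T}=O(n^{-\alpha})$; this yields an overall rate $O(n^{-\alpha})$. You instead (i) bound the sum factor by $C(\omega)(kT/n)^\alpha$ via the H\"older regularity of $B$ and the explicit one-step formula for $Y^n$, and (ii) derive the Euler error from scratch by solving the linear recurrence $(1+RT/n)e_j=e_{j-1}+\delta_j$, then extracting the refined increment bound $|e_{i+k}-e_i|=O(kn^{-1-\alpha})$ from the explicit representation $e_j=\sum_{l\leqslant j}\rho^{j-l+1}\delta_l$. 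The payoff is a self-contained proof (no external citation for the scheme error) and a sharper final rate $O(k^{1+\alpha}n^{-2\alpha})$. The Abel-summation step behind your claim $\|Y^n(x_0)\|_{\infty,T}\leqslant |x_0|+2\sigma\|B\|_{\infty,T}$ and the Fernique argument for $L^p$-integrability of the constants are both fine.
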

%


%
\begin{proof}
Let $i\in\textrm I_n$ and $p > 0$ be arbitrarily chosen.
\begin{small}
\begin{eqnarray}
 |[\Delta_kY^n(x_0)]_{i}^{2} - [\Delta_kY(x_0)]_{i}^{2}|
 & \leqslant &
 2Y^*\times
 \nonumber\\
 & &
 \sum_{j = 1}^{k}
 |Y_{i + j}^{n}(x_0) - Y_{i + j - 1}^{n}(x_0) - [Y_{i + j}(x_0) - Y_{i + j - 1}(x_0)]|
 \nonumber\\
 & \leqslant &
 2RY^*
 \sum_{j = 1}^{k}
 \left|
 Y_{i + j}^{n}(x_0)Tn^{-1} -
 \int_{t_{i + j - 1}^{n}}^{t_{i + j}^{n}}
 Y_t(x_0)dt
 \right|
 \nonumber\\
 & \leqslant &
 2RY^*
 \sum_{j = 1}^{k}
 \int_{t_{i + j - 1}^{n}}^{t_{i + j}^{n}}
 |Y_{i + j}^{n}(x_0) - Y_t(x_0)|dt
 \nonumber\\
 \label{fOU_Euler_1}
 & \leqslant &
 2RY^*\left[Tn^{-1}
 \sum_{j = 1}^{k}
 |Y_{i + j}^{n}(x_0) - Y_{t_{i + j}^{n}}(x_0)| +\right.\\
 & &
 \left.\sum_{j = 1}^{k}
 \int_{t_{i + j - 1}^{n}}^{t_{i + j}^{n}}
 |Y_{t_{i + j}^{n}}(x_0) - Y_t(x_0)|dt\right]
 \nonumber
\end{eqnarray}
\end{small}
\newline
with
\begin{eqnarray*}
 Y^* & := &
 \|Y(x_0)\|_{\infty,T} +
 \sup_{n\in\mathbb N^*}\|Y^n(x_0)\|_{\infty,T}\\
 & &
 \in L^p(\Omega,\mathbb P).
\end{eqnarray*}
On one hand, by Garrido-Atienza et al. \cite{GAKN09}, Theorem 1 ; there exists $C(\alpha,H,T)\in L^p(\Omega,\mathbb P)$ such that :
\begin{displaymath}
\|Y^n(x_0) - Y(x_0)\|_{\infty,T}
\leqslant
C(\alpha,H,T)T^{\alpha}n^{-\alpha}.
\end{displaymath}
So,
\begin{equation}\label{fOU_Euler_2}
Tn^{-1}
\sum_{j = 1}^{k}
|Y_{i + j}^{n}(x_0) - Y_{t_{i + j}^{n}}(x_0)|
\leqslant
C(\alpha,H,T)
T^{\alpha + 1}n^{-\alpha}.
\end{equation}
On the other hand, the paths of $Y(x_0)$ are $\alpha$-H\"older continuous on $[0,T]$ with $\|Y(x_0)\|_{\alpha,T}\in L^p(\Omega,\mathbb P)$. So,
\begin{eqnarray}
 \sum_{j = 1}^{k}
 \int_{t_{i + j - 1}^{n}}^{t_{i + j}^{n}}
 |Y_{t_{i + j}^{n}}(x_0) - Y_t(x_0)|dt
 & \leqslant & \|Y(x_0)\|_{\alpha,T}
 \sum_{j = 1}^{k}
 \int_{t_{i + j - 1}^{n}}^{t_{i + j}^{n}}
 (t_{i + j}^{n} - t)^{\alpha}dt
 \nonumber\\
 \label{fOU_Euler_3}
 & \leqslant &
 \|Y(x_0)\|_{\alpha,T}T^{\alpha + 1}n^{-\alpha}.
\end{eqnarray}
By Inequality (\ref{fOU_Euler_1}) together with inequalities (\ref{fOU_Euler_2}) and (\ref{fOU_Euler_3}) :
\begin{displaymath}
|\textrm V_{\textrm I_n,k}[Y^n(x_0)] -\textrm V_{\textrm I_n,k}[Y(x_0)]|
\leqslant
2RY^*[C(\alpha,H,T) +\|Y(x_0)\|_{\alpha,T}]
T^{\alpha + 1}n^{-\alpha}.
\end{displaymath}
That finishes the proof.
\end{proof}
\noindent
Consider $\textrm I_{n}^{1} := \{0,\dots,n\}$, $\textrm I_{n}^{2} := \{2i\textrm{ $;$ }i\in\{0,\dots,[n/2]\}$,
\begin{displaymath}
\widehat H_n :=
\frac{1}{2} -\frac{1}{2\log(2)}\log\left[
\frac{\textrm V_{\textrm I_{n}^{1},1}[Y(x_0)]}{\textrm V_{\textrm I_{n}^{2},2}[Y(x_0)]}\right]
\textrm{ and }
\widehat h_n :=
\frac{1}{2} -\frac{1}{2\log(2)}\log\left[
\frac{\textrm V_{\textrm I_{n}^{1},1}[Y^n(x_0)]}{\textrm V_{\textrm I_{n}^{2},2}[Y^n(x_0)]}\right].
\end{displaymath}
%


%
\begin{proposition}\label{H_consistency}
$\widehat h_n$ is a strongly consistent estimator of $H$.
\end{proposition}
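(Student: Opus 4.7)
The plan is to reduce the strong consistency of $\widehat h_n$ to the already-known consistency of $\widehat H_n$ for the fractional Ornstein--Uhlenbeck process $Y(x_0)$ (see Melichov \cite{MELICHOV11} or Brouste and Iacus \cite{BI13}). Writing
\begin{displaymath}
\widehat h_n - H = (\widehat h_n -\widehat H_n) + (\widehat H_n - H),
\end{displaymath}
the second term tends almost surely to zero by the cited results: the drift contribution in $Y(x_0)$ being of lower order than the fBm increments, the ratio $\textrm V_{\textrm I_{n}^{1},1}[Y(x_0)]/\textrm V_{\textrm I_{n}^{2},2}[Y(x_0)]$ converges almost surely to $2^{1-2H}$, and the formula defining $\widehat H_n$ returns $H$. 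It then suffices to show that $\widehat h_n -\widehat H_n\to 0$ almost surely.

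By continuity of $\log$, this reduces to proving that
\begin{displaymath}
\frac{\textrm V_{\textrm I_{n}^{1},1}[Y^n(x_0)]}{\textrm V_{\textrm I_{n}^{2},2}[Y^n(x_0)]} -
\frac{\textrm V_{\textrm I_{n}^{1},1}[Y(x_0)]}{\textrm V_{\textrm I_{n}^{2},2}[Y(x_0)]}
\xrightarrow[n\rightarrow\infty]{\textrm{a.s.}} 0.
\end{displaymath}
Writing $a_n, b_n$ for the right-hand numerator and denominator, and $a_n', b_n'$ for the left-hand ones, I would apply the algebraic identity
\begin{displaymath}
\frac{a_n'}{b_n'} -\frac{a_n}{b_n} =
\frac{(a_n' - a_n) b_n + a_n(b_n - b_n')}{b_n b_n'},
\end{displaymath}
then estimate each numerator term by Proposition \ref{convergence_quadratic_variation}, which yields $|a_n' - a_n|$ and $|b_n' - b_n|$ both $\textrm{O}(n^{-\alpha})$ almost surely, and control the denominator through the almost sure equivalents $a_n, b_n\sim cn^{1 - 2H}$ for explicit positive constants $c$, inherited from the self-similar structure of the fBm driving $Y(x_0)$. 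Combining these gives in particular $b_n'\geq b_n/2$ for $n$ large enough.

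The main obstacle is the interplay between the rate $n^{-\alpha}$ at which the Euler scheme's quadratic variations approach those of the continuous process and the rate $n^{1 - 2H}$ at which these variations themselves decay when $H > 1/2$. The resulting bound on the displayed difference is of order $n^{2H - 1 -\alpha}$, which tends to zero as soon as $\alpha > 2H - 1$. Since $\alpha\in\,]0, H[$ may be chosen arbitrarily close to $H$ and $H < 1$ entails $H > 2H - 1$, such a choice is always available, which closes the argument.
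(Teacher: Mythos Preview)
Your strategy is the paper's: write $\widehat h_n - H = (\widehat h_n -\widehat H_n) + (\widehat H_n - H)$, dispatch the second term via Melichov \cite{MELICHOV11}, and control the first via Proposition~\ref{convergence_quadratic_variation}. The paper simply bounds
\begin{displaymath}
|\widehat h_n -\widehat H_n|\leqslant
\frac{1}{2\log 2}\sum_{j=1,2}
\left|\log\textrm V_{\textrm I_{n}^{j},j}[Y(x_0)] -
\log\textrm V_{\textrm I_{n}^{j},j}[Y^n(x_0)]\right|
\end{displaymath}
and invokes ``uniform continuity of $\log$ on $\mathbb R_{+}^{*}$'' together with Proposition~\ref{convergence_quadratic_variation}.

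Your more quantitative route through the ratio identity is in fact a genuine strengthening rather than a detour: $\log$ is \emph{not} uniformly continuous on $\mathbb R_{+}^{*}$, and for $H > 1/2$ the individual quadratic variations $\textrm V_{\textrm I_{n}^{j},j}[Y(x_0)]$ decay like $n^{1-2H}\to 0$, so the paper's one-line appeal does not close as stated. Your computation that the discrepancy in the ratios is $\textrm O(n^{2H-1-\alpha})$, together with the freedom to take $\alpha\in\,]2H-1,H[$ (the estimators themselves do not depend on $\alpha$, only the H\"older bounds do), is precisely what is needed to make the argument rigorous. So: same architecture, but your version actually fills a gap the paper leaves open.
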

%


%
\begin{proof}
By Proposition \ref{convergence_quadratic_variation} together with the uniform continuity of $\log$ on $]0,\infty[$ :
\begin{eqnarray*}
 |\widehat h_n -\widehat H_n| & \leqslant &
 \frac{1}{2\log(2)}[
 |\log[\textrm V_{\textrm I_{n}^{1},1}[Y(x_0)]] -
 \log[\textrm V_{\textrm I_{n}^{1},1}[Y^n(x_0)]]| +\\
 & &
 |\log[\textrm V_{\textrm I_{n}^{2},2}[Y(x_0)]] -
 \log[\textrm V_{\textrm I_{n}^{2},2}[Y^n(x_0)]]|]
 \xrightarrow[n\rightarrow\infty]{\textrm{a.s.}} 0.
\end{eqnarray*}
Moreover, by Melichov \cite{MELICHOV11}, Section 3.2.1 ; $\widehat H_n$ is a strongly consistent estimator of $H$. So,
\begin{eqnarray*}
 |\widehat h_n - H| & \leqslant &
 |\widehat h_n -\widehat H_n| +
 |\widehat H_n - H|\\
 & &
 \xrightarrow[n\rightarrow\infty]{\textrm{a.s.}} 0.
\end{eqnarray*}
\end{proof}
%


%
\section{Application to singular equations driven by a multiplicative noise}
\noindent
Let $F : ]0,\infty[\rightarrow\mathbb R$ be a function satisfying the following assumption.
%


%
\begin{assumption}\label{change_of_variable_assumption}
\white .\black
\begin{enumerate}
 \item The function $F$ is $[1/\alpha] + 2$ times continuously differentiable on $]0,\infty[$.
 \item The function $F$ is strictly monotonic on $]0,\infty[$.
\end{enumerate}
\end{assumption}
\noindent
Under Assumption \ref{drift_assumption}, Equation (\ref{main_equation}) with the initial condition $x_0 > 0$ has a unique $]0,\infty[$-valued solution $X(x_0)$ on $\mathbb R_+$ by Proposition \ref{existence_solution}. Then, under Assumption \ref{change_of_variable_assumption}, by the rough change of variable formula :
\begin{eqnarray*}
 F[X_t(x_0)] & = &
 F(x_0) +
 \int_{0}^{t}\dot F[X_s(x_0)]dX_s(x_0)\\
 & = &
 F(x_0) +
 \int_{0}^{t}\dot F[X_s(x_0)]b[X_s(x_0)]ds +
 \sigma\int_{0}^{t}\dot F[X_s(x_0)]dB_s
\end{eqnarray*}
for every $t\in\mathbb R_+$. Therefore, by putting $\textrm I := F(]0,\infty[)$, with the initial condition $z_0\in\textrm I$, the following equation has a unique $\textrm I$-valued solution $Z(z_0) := (Z_t(z_0))_{t\in\mathbb R_+}$ on $\mathbb R_+$ :
\begin{equation}\label{main_multiplicative_equation}
Z_t =
z_0 +
\int_{0}^{t}G(Z_s)H(Z_s)ds +
\sigma\int_{0}^{t}H(Z_s)dB_s
\end{equation}
with $G := b\circ F^{-1}$ and $H :=\dot F\circ F^{-1}$.
\\
\\
\textbf{Examples.} Consider $\kappa\in\mathbb R^*$ and $u,v,w,\gamma > 0$ such that $1 -\alpha <\alpha\gamma$. Put $b(x) := u(vx^{-\gamma} - wx)$ and $F_{\kappa}(x) := x^{\kappa}$ for every $x > 0$. The function $b$ (resp. $F_{\kappa}$) satisfies Assumption \ref{drift_assumption} (resp. Assumption \ref{change_of_variable_assumption}). Then, Equation (\ref{main_multiplicative_equation}) \mbox{becomes :}
\begin{displaymath}
Z_t = z_0 +
\kappa u\int_{0}^{t}[vZ_{s}^{1 - (\gamma + 1)/\kappa} - wZ_s]ds +
\kappa\sigma\int_{0}^{t}Z_{s}^{1 - 1/\kappa}dB_s.
\end{displaymath}
On one hand, assume that $\kappa =\gamma + 1$, $u = 1/(\gamma + 1)$ and $\sigma = \zeta/(\gamma + 1)$ with $\zeta\in\mathbb R^*$. Then, by putting $\beta := 1 - 1/(\gamma + 1)$, Equation (\ref{main_multiplicative_equation}) becomes
\begin{displaymath}
Z_t = y_0 +
\int_{0}^{t}(v - wZ_s)ds +
\zeta\int_{0}^{t}Z_{s}^{\beta}dB_s
\end{displaymath}
and $\beta\in ]1 -\alpha,1[$. So, in that case, Equation (\ref{main_multiplicative_equation}) is the generalized Cox-Ingersoll-Ross model partially studied in Marie \cite{MARIE14}.
\\
\\
On the other hand, assume that $\kappa = -(\gamma + 1)$, $u = 1/(\gamma + 1)$ and $\sigma = -\zeta^*/(\gamma + 1)$ with $\zeta^*\in\mathbb R^*$. Then, by putting $\beta^* := 1/(\gamma + 1)$, Equation (\ref{main_multiplicative_equation}) becomes
\begin{displaymath}
Z_t = z_0 +
\int_{0}^{t}Z_s(w - vZ_s)ds +
\zeta^*\int_{0}^{t}Z_{s}^{1 +\beta^*}dB_s
\end{displaymath}
and $\beta^*\in ]0,\alpha[$. So, in that case, Equation (\ref{main_multiplicative_equation}) is a generalized Verhulst's model, studied for $\beta^* = 0$ and a fractional Brownian signal in Huy and Nguyen \cite{HN02}.
\\
\\
The section deals with how to transfer the probabilistic and statistical properties established at Section 3 on the solution of Equation (\ref{main_equation}) to the stochastic process $Z(z_0)$. A fractional Heston model is also introduced.
%


%
\begin{proposition}\label{differentiability_Ito_map_multiplicative}
Under assumptions \ref{drift_assumption} and \ref{change_of_variable_assumption}, the It\^o map associated to the deterministic analog of Equation (\ref{main_multiplicative_equation}) is continuously differentiable from
\begin{displaymath}
\normalfont\textrm I\times C^{\alpha}([0,T],\mathbb R)
\textrm{ into }
C^0([0,T],\normalfont\textrm I)
\end{displaymath}
for every $T > 0$.
\end{proposition}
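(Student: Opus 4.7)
The plan is to exploit the change of variable that motivated Equation (\ref{main_multiplicative_equation}) in the first place: up to the bijection $F$, the Itô map of Equation (\ref{main_multiplicative_equation}) coincides with the Itô map $x(.)$ of Equation (\ref{main_equation_deterministic}). Let $z(.)$ denote the Itô map of the deterministic analog of Equation (\ref{main_multiplicative_equation}). The first step is to establish the factorization
$$z(z_0, w) = F \circ x(F^{-1}(z_0), w), \quad (z_0, w) \in \textrm{I} \times C^{\alpha}([0,T],\mathbb R).$$
This is the reverse of the rough change of variable computation carried out just before the statement of the proposition: the right-hand side takes values in $\textrm{I}$ and satisfies the deterministic analog of Equation (\ref{main_multiplicative_equation}) with initial condition $z_0$, and uniqueness of the $\textrm{I}$-valued solution yields the identity.

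Given this identity, I would write $z(.)$ as the composition of three maps
$$(z_0, w) \longmapsto (F^{-1}(z_0), w) \longmapsto x(F^{-1}(z_0), w) \longmapsto F \circ x(F^{-1}(z_0), w),$$
and verify that each factor is continuously Fréchet differentiable on its domain. The first factor is $C^1$ from $\textrm{I} \times C^{\alpha}([0,T],\mathbb R)$ into $\mathbb R_{+}^{*} \times C^{\alpha}([0,T],\mathbb R)$ since Assumption \ref{change_of_variable_assumption} makes $F$ strictly monotonic and $[1/\alpha]+2$ times continuously differentiable on $\mathbb R_{+}^{*}$, so $F^{-1}$ has the same regularity on $\textrm{I}$ by the inverse function theorem (applied locally around $z_0$, where $\dot F \circ F^{-1}$ does not vanish). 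The second factor is $C^1$ by Proposition \ref{differentiability_Ito_map}. The third factor, the Nemytskii operator $\mathcal N_F : \varphi \longmapsto F \circ \varphi$ from $C^0([0,T], \mathbb R_{+}^{*})$ into $C^0([0,T], \textrm{I})$, is $C^1$ because $F \in C^1(\mathbb R_{+}^{*})$; its Fréchet derivative at $\varphi$ acts by pointwise multiplication $h \longmapsto (\dot F \circ \varphi) \cdot h$, and the continuity of this derivative in $\varphi$ follows from the uniform continuity of $\dot F$ on compact subsets of $\mathbb R_{+}^{*}$. The chain rule for Fréchet differentiable maps between Banach spaces then yields continuous differentiability of $z(.)$.

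The main technical point is the Nemytskii step, which proceeds by localization: given $(z_0, w)$, the image of $x(F^{-1}(z_0), w)$ on $[0,T]$ is a compact subset $K \subset \mathbb R_{+}^{*}$ by Proposition \ref{existence_solution} together with continuity on $[0,T]$, and Proposition \ref{lipschitz_continuity_solution} implies that a small perturbation of $(z_0, w)$ produces functions whose ranges remain in a fixed compact neighborhood of $K$ inside $\mathbb R_{+}^{*}$. On such a neighborhood $F$ and $\dot F$ are bounded and uniformly continuous, which is exactly what is needed to run a standard Nemytskii argument. This mirrors the localization already used in the proof of Proposition \ref{differentiability_Ito_map}, and the rest is a routine chain rule computation giving, in the direction $(\zeta, h) \in \mathbb R \times C^{\alpha}([0,T], \mathbb R)$, the explicit derivative
$$\textrm D_{(\zeta, h)} z_t(z_0, w) = \dot F[x_t(F^{-1}(z_0), w)] \cdot \textrm D_{(\zeta / \dot F(F^{-1}(z_0)),\, h)} x_t(F^{-1}(z_0), w).$$
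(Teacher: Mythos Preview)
Your proposal is correct and follows essentially the same route as the paper: factor the It\^o map of Equation (\ref{main_multiplicative_equation}) as $(z_0,w)\mapsto F\circ x(F^{-1}(z_0),w)$ and invoke Proposition \ref{differentiability_Ito_map} together with the regularity of $F$ and $F^{-1}$ from Assumption \ref{change_of_variable_assumption}. The paper's proof is terser---it simply asserts the conclusion from these ingredients---whereas you spell out the Nemytskii step and the localization, which is a welcome clarification but not a different argument.
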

%


%
\begin{proof}
Let $T > 0$ be arbitrarily chosen. By Proposition \ref{differentiability_Ito_map}, $x(.)$ is continuously differentiable from
\begin{displaymath}
]0,\infty[\times C^{\alpha}([0,T],\mathbb R)
\textrm{ into }
C^0([0,T],]0,\infty[).
\end{displaymath}
Moreover, by Assumption \ref{change_of_variable_assumption}, $F$ and $F^{-1}$ are $[1/\alpha] + 2$ times continuously differentiable on $]0,\infty[$ and $\textrm I$ respectively. So, the map
\begin{displaymath}
(z_0,w)\longmapsto
F\circ x[F^{-1}(z_0),w]
\end{displaymath}
is continuously differentiable from
\begin{displaymath}
\textrm I\times C^{\alpha}([0,T],\mathbb R)
\textrm{ into }
C^0([0,T],\textrm I).
\end{displaymath}
\end{proof}
%


%
\subsection{Probabilistic and statistical properties of the solution}
Assume that $B$ is a centered Gaussian process with locally $\alpha$-H\"older continuous paths.
\\
\\
The subsection deals with how to transfer probabilistic properties established at Section 3 on the solution of Equation (\ref{main_equation}) to the stochastic process $Z(z_0)$.
\\
\\
In the sequel, the function $F$ satisfies the following assumption.
%


%
\begin{assumption}\label{F_additional_1}
The function $F$ is defined and uniformly continuous on $\mathbb R_+$, satisfies Assumption \ref{change_of_variable_assumption}, and
\begin{displaymath}
\forall x\in\mathbb R_+
\textrm{$,$ }
|F(x)|
\leqslant C(1 + x^k)
\end{displaymath}
with $C > 0$ and $k\in\mathbb N^*$.
\end{assumption}
\noindent
\textbf{Example.} The function $F_{\kappa}$ with $\kappa > 0$ satisfies Assumption \ref{F_additional_1}.
%


%
\begin{proposition}\label{integrability_solution_multiplicative}
For every $T > 0$, under assumptions \ref{drift_assumption} and \ref{F_additional_1} :
\begin{displaymath}
\|Z(z_0)\|_{\infty,T}\in L^p(\Omega,\mathbb P)
\end{displaymath}
for every $p > 0$.
\end{proposition}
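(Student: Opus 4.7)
The plan is to reduce the bound on $\|Z(z_0)\|_{\infty,T}$ to the already established bound on $\|X(x_0)\|_{\infty,T}$ via the identification $Z(z_0)=F\circ X[F^{-1}(z_0)]$ coming from the rough change of variable formula that motivates Equation (\ref{main_multiplicative_equation}), and then invoke Fernique's theorem.

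First I would set $x_0 := F^{-1}(z_0)$, which is well defined and strictly positive because $F$ is strictly monotonic on $\mathbb R_+^*$ by Assumption \ref{change_of_variable_assumption}.(2) and $\textrm I = F(\mathbb R_+^*)$. By the derivation preceding the statement of Equation (\ref{main_multiplicative_equation}), the process $t\mapsto F[X_t(x_0)]$ solves Equation (\ref{main_multiplicative_equation}) with initial condition $z_0$, and the uniqueness asserted there yields
\begin{displaymath}
Z_t(z_0)=F[X_t(x_0)]\textrm{ $;$ }\forall t\in\mathbb R_+.
\end{displaymath}

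Next, the polynomial growth condition in Assumption \ref{F_additional_1} gives a constant $C>0$ and an integer $k\in\mathbb N^*$ such that $|F(x)|\leqslant C(1+x^k)$ for every $x\in\mathbb R_+$, whence
\begin{displaymath}
\|Z(z_0)\|_{\infty,T}\leqslant C\bigl(1+\|X(x_0)\|_{\infty,T}^{k}\bigr).
\end{displaymath}
It therefore suffices to show $\|X(x_0)\|_{\infty,T}\in L^{q}(\Omega,\mathbb P)$ for every $q>0$. Proposition \ref{solution_estimate} (applied pathwise with $w=B(\omega)$) yields
\begin{displaymath}
\|X(x_0)\|_{\infty,T}\leqslant x_0+|b(x_0)|T+2\sigma\|B\|_{\infty,T},
\end{displaymath}
so the claim reduces to the integrability of all moments of $\|B\|_{\infty,T}$.

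The last step is a direct application of Fernique's theorem (\cite{FERNIQUE70}): since $B$ is a centered Gaussian process with $\alpha$-Hölder continuous paths on $[0,T]$, the random variable $\|B\|_{\infty,T}$ has finite moments of every order, exactly as already used at the beginning of Section 3 for $\|X(x_0)\|_{\infty,T}$. Combining the two bounds gives $\|Z(z_0)\|_{\infty,T}\in L^p(\Omega,\mathbb P)$ for every $p>0$. I do not foresee any real obstacle: the only point that requires a word of justification is the identification $Z(z_0)=F[X(F^{-1}(z_0))]$, which is implicit in the discussion preceding Equation (\ref{main_multiplicative_equation}) and is what makes the argument purely a transfer through the bounded-growth map $F$.
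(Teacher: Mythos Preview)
Your proposal is correct and follows essentially the same route as the paper: identify $Z(z_0)=F\circ X[F^{-1}(z_0)]$, apply the polynomial growth bound from Assumption \ref{F_additional_1}, control $\|X(x_0)\|_{\infty,T}$ via Proposition \ref{solution_estimate}, and conclude with Fernique's theorem. The only cosmetic difference is that the paper collapses the two bounds into a single inequality $|Z_t(z_0)|\leqslant C(T)(1+\|B\|_{\infty,T}^{k})$ before invoking Fernique, whereas you pass through $\|X(x_0)\|_{\infty,T}\in L^q$ as an intermediate statement.
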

%


%
\begin{proof}
Let $T > 0$ and $t\in [0,T]$ be arbitrarily chosen. By Proposition \ref{solution_estimate} :
\begin{displaymath}
0 < X_t[F^{-1}(z_0)]\leqslant
F^{-1}(z_0) + |b[F^{-1}(z_0)]|T + 2\sigma\|B\|_{\infty,T}.
\end{displaymath}
So, by Jensen's inequality :
\begin{eqnarray*}
 |Z_t(z_0)| & = &
 |F[X_t[F^{-1}(z_0)]]|\\
 & \leqslant &
 C[1 + X_{t}^{k}[F^{-1}(z_0)]]\\
 & \leqslant &
 C(T)(1 +\|B\|_{\infty,T}^{k})
\end{eqnarray*}
with $C(T) > 0$ (deterministic). Therefore, by Fernique's theorem :
\begin{displaymath}
\|Z(z_0)\|_{\infty,T}\in L^p(\Omega,\mathbb P)
\end{displaymath}
for every $p > 0$.
\end{proof}
%


%
\begin{proposition}\label{ergodic_theorem_multiplicative}
Assume that $B$ is a two-sided fractional Brownian motion of Hurst parameter $H\in ]0,1[$. Under assumptions \ref{drift_assumption} and \ref{F_additional_1}, for every uniformly continuous function $\psi :\normalfont\textrm I\rightarrow\mathbb R$ with polynomial growth, and every $z_0\in\normalfont\textrm I$,
\begin{displaymath}
\frac{1}{T}\int_{0}^{T}
\psi[Z_t(z_0)]dt
\xrightarrow[T\rightarrow\infty]{}\mathbb E[\psi[F(X^*)]]
\end{displaymath}
almost surely and in $L^p(\Omega,\mathbb P)$ for every $p > 0$.
\end{proposition}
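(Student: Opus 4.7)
The strategy is a direct reduction to the ergodic theorem for $X(x_0)$ (Corollary \ref{ergodic_theorem_CIR}) via the pathwise identity $Z_t(z_0) = F[X_t(F^{-1}(z_0))]$ established at the beginning of Section 4. Setting $x_0 := F^{-1}(z_0) \in \mathbb R_{+}^{*}$ (which is well defined since $z_0\in\textrm I = F(\mathbb R_{+}^{*})$) and $\varphi := \psi\circ F$, the time average on the left becomes $\tfrac{1}{T}\int_{0}^{T}\varphi[X_t(x_0)]dt$, and the target limit $\mathbb E[\psi[F(X^*)]]$ becomes $\mathbb E[\varphi(X^*)]$. So everything reduces to applying Corollary \ref{ergodic_theorem_CIR} to $\varphi$.

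The remaining task is to check that $\varphi$ meets the hypotheses of Corollary \ref{ergodic_theorem_CIR}: uniform continuity on $\mathbb R_+$ and polynomial growth. For uniform continuity, Assumption \ref{F_additional_1} ensures that $F$ extends to a uniformly continuous function on $\mathbb R_+$ taking values in the closure of $\textrm I$; since $\psi$ is uniformly continuous on $\textrm I$ (and extends by uniform continuity to the closure), the composition $\varphi$ is uniformly continuous on $\mathbb R_+$. For polynomial growth, combining $|\psi(y)|\leqslant c(1 + |y|^n)$ (polynomial growth of $\psi$) with $|F(x)|\leqslant C(1 + x^k)$ (Assumption \ref{F_additional_1}) yields $|\varphi(x)|\leqslant c(1 + C^n(1 + x^k)^n)\leqslant c'(1 + x^{kn})$ for some constant $c' > 0$, which is the required polynomial bound.

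Applying Corollary \ref{ergodic_theorem_CIR} to $\varphi$ with initial condition $x_0$ then delivers both the almost-sure convergence and the $L^p(\Omega,\mathbb P)$ convergence of $\tfrac{1}{T}\int_{0}^{T}\varphi[X_t(x_0)]dt$ to $\mathbb E[\varphi(X^*)] = \mathbb E[\psi[F(X^*)]]$, which is exactly the statement. I expect no serious obstacle: the only mild technicality is to confirm the pathwise identity $Z_t(z_0) = F[X_t(x_0)]$ off a null set, which follows from the rough change of variable formula together with the uniqueness of the $\textrm I$-valued solution of Equation (\ref{main_multiplicative_equation}) (itself consequence of Proposition \ref{existence_solution} and the strict monotonicity of $F$).
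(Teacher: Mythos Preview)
Your proposal is correct and follows exactly the same route as the paper: set $\varphi := \psi\circ F$, use Assumption \ref{F_additional_1} to check that $\varphi$ is uniformly continuous with polynomial growth, and apply Corollary \ref{ergodic_theorem_CIR} with $x_0 = F^{-1}(z_0)$. Your verification of the hypotheses is in fact more detailed than the paper's one-line justification.
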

%


%
\begin{proof}
Let an uniformly continuous function $\psi :\textrm I\rightarrow\mathbb R$ with polynomial growth be arbitrarily chosen. Since $F :\mathbb R_+\rightarrow\textrm I$ is also uniformly continuous with polynomial growth, the function $\varphi :=\psi\circ F$ satisfies the conditions of Corollary \ref{ergodic_theorem_CIR}. Then, for every $z_0\in\textrm I$,
\begin{eqnarray*}
 \frac{1}{T}\int_{0}^{T}
 \psi[Z_t(z_0)]dt
 & = &
 \frac{1}{T}\int_{0}^{T}
 \varphi[X_t[F^{-1}(z_0)]]dt\\
 & &
 \xrightarrow[T\rightarrow\infty]{}
 \mathbb E[\varphi(X^*)]
\end{eqnarray*}
almost surely and in $L^p(\Omega,\mathbb P)$ for every $p > 0$. That finishes the proof.
\end{proof}
%


%
\begin{proposition}\label{density_multiplicative}
Let $T > 0$ be arbitrarily fixed. Under assumptions \ref{drift_assumption}, \ref{Gaussian_assumption} and \ref{F_additional_1} with $F^{-1}\in C^1(\overline{\normalfont\textrm I},\mathbb R_+)$, the distribution of $Z_t(z_0)$ has a density with respect to the Lebesgue measure on $(\mathbb R,\mathcal B(\mathbb R))$ for every $t\in ]0,T]$.
\end{proposition}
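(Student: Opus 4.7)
The plan is to reduce the problem to the absolute continuity of $X_t(F^{-1}(z_0))$, established in Proposition \ref{Malliavin_derivative_absolute_continuity}, by exploiting that under the stated hypotheses $F$ is a $C^1$-diffeomorphism from $\mathbb R_{+}^{*}$ onto the open interval $\textrm I$. The identity
\begin{displaymath}
Z_t(z_0) = F[X_t(F^{-1}(z_0))]
\end{displaymath}
already appeared in the discussion preceding Equation (\ref{main_multiplicative_equation}) (it is just the rough change of variable formula under Assumption \ref{change_of_variable_assumption}), so the distribution of $Z_t(z_0)$ is nothing but the pushforward of that of $X_t(F^{-1}(z_0))$ by $F$.

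First, I would check that $F$ is a $C^1$-diffeomorphism. By Assumption \ref{change_of_variable_assumption}.(1)-(2), $F$ is strictly monotonic and continuously differentiable on $\mathbb R_{+}^{*}$, so $\dot F$ has constant sign on $\mathbb R_{+}^{*}$. The extra assumption $F^{-1}\in C^1(\overline{\textrm I},\mathbb R_+)$, combined with the identity $\dot F(F^{-1}(y))\cdot (F^{-1})'(y) = 1$ valid for every $y\in\textrm I$, shows that $\dot F$ cannot vanish on $\mathbb R_{+}^{*}$. Hence $F:\mathbb R_{+}^{*}\to\textrm I$ is a $C^1$-diffeomorphism with inverse in $C^1(\overline{\textrm I},\mathbb R_+)$.

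Second, I would apply Proposition \ref{Malliavin_derivative_absolute_continuity} to get that for every $t\in\,]0,T]$ the random variable $X_t(F^{-1}(z_0))$ admits a density $p^X_t$ with respect to Lebesgue measure on $(\mathbb R,\mathcal B(\mathbb R))$ (supported in $\mathbb R_{+}^{*}$). The usual change of variables formula for pushforward measures through a $C^1$-diffeomorphism then yields that $Z_t(z_0)$ has density
\begin{displaymath}
p^Z_t(y) = p^X_t(F^{-1}(y))\,|(F^{-1})'(y)|\,\mathbf 1_{\textrm I}(y)
\end{displaymath}
with respect to Lebesgue measure on $(\mathbb R,\mathcal B(\mathbb R))$, which concludes the proof.

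The only subtle point, and the one I would expect to be the main obstacle, is the non-vanishing of $\dot F$ on $\mathbb R_{+}^{*}$: strict monotonicity plus $C^1$ regularity alone is not enough (consider $x\mapsto x^3$ at $0$), and this is precisely where the extra hypothesis $F^{-1}\in C^1(\overline{\textrm I},\mathbb R_+)$ does its work via the inverse function identity. An alternative route would be to apply the Bouleau-Hirsch criterion directly to $Z_t(z_0)$, using the chain rule for the Malliavin derivative $\mathbf D_sZ_t(z_0)=\dot F[X_t(F^{-1}(z_0))]\mathbf D_sX_t(F^{-1}(z_0))$ and the lower bound on the Malliavin variance already obtained in (\ref{estimate_Malliavin_2}); but this would require a separate justification that $Z_t(z_0)\in\mathbb D^{1,2}$ (for instance by the same localisation argument based on Proposition \ref{differentiability_Ito_map_multiplicative} as in the proof of Proposition \ref{Malliavin_derivative_absolute_continuity}), and is strictly less direct than the pushforward argument above.
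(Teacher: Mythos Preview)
Your proposal is correct and follows essentially the same route as the paper: apply Proposition \ref{Malliavin_derivative_absolute_continuity} to $X_t[F^{-1}(z_0)]$, then push forward through $F$ via the transfer/change-of-variables formula to obtain a density for $Z_t(z_0)$. Your treatment is in fact more careful than the paper's, which writes the density as $\mathbf f_t[F^{-1}(z)]/\dot F[F^{-1}(z)]$ without an absolute value and without explicitly justifying that $\dot F$ does not vanish; your observation that the hypothesis $F^{-1}\in C^1(\overline{\textrm I},\mathbb R_+)$ is precisely what rules out critical points of $F$ is a genuine clarification.
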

%


%
\begin{proof}
Let $t\in ]0,T]$ be arbitrarily chosen. By Proposition \ref{Malliavin_derivative_absolute_continuity} ; the distribution of $X_t[F^{-1}(z_0)]$ has a density $\mathbf f_t$ with respect to the Lebesgue measure on $(\mathbb R,\mathcal B(\mathbb R))$. So, by a straightforward application of the transfer theorem :
\begin{equation}\label{density_transfer}
\mathbb P_{Z_t(z_0)}(dz) =
\frac{\mathbf f_t[F^{-1}(z)]}{\dot F[F^{-1}(z)]}dz.
\end{equation}
Therefore, the distribution of $Z_t(z_0)$ has a density with respect to the Lebesgue measure on $(\mathbb R,\mathcal B(\mathbb R))$.
\end{proof}
\noindent
\textbf{Example.} The function $F_{\kappa}$ with $\kappa\in ]0,1]$ satisfies Assumption \ref{F_additional_1} with $F_{\kappa}^{-1}\in C^1(\overline{\textrm I},\mathbb R_+)$.
\\
\\
\textbf{Remark.} Let $t\in ]0,T]$ be arbitrarily chosen. The density $\mathbf f_t$ can be the one provided at Proposition \ref{explicit_density}. So, Equality (\ref{density_transfer}) together with Proposition \ref{explicit_density} provide a density, with a suitable expression, of the distribution of $Z_t(z_0)$.
%


%
\begin{proposition}\label{probabilistic_convergence_approximation}
Consider $T > 0$ and assume that $B$ is a centered Gaussian process defined on $[0,T]$, with $\alpha$-H\"older continuous paths. Put
\begin{displaymath}
Z_{t}^{n}(z_0) := F[X_{t}^{n}[F^{-1}(z_0)]]
\end{displaymath}
for every $t\in [0,T]$ and $n\in\mathbb N^*$.
\begin{enumerate}
 \item Under assumptions \ref{drift_assumption} and \ref{change_of_variable_assumption} :
 \begin{displaymath}
 \|Z^n(z_0) - Z(z_0)\|_{\infty,T}
 \xrightarrow[n\rightarrow\infty]{\textrm{a.s.}} 0
 \end{displaymath}
 with rate of convergence $\normalfont{\textrm O}(n^{-\alpha})$.
 \item Under assumptions \ref{drift_assumption} and \ref{F_additional_1}, for every $p > 0$,
 \begin{displaymath}
 \sup_{n\in\mathbb N^*}\|Z^n(x_0)\|_{\infty,T}\in
 L^p(\Omega,\mathbb P)
 \end{displaymath}
 and
 \begin{displaymath}
 \lim_{n\rightarrow\infty}\mathbb E[
 \|Z^n(z_0) - Z(z_0)\|_{\infty,T}^{p}] = 0.
 \end{displaymath}
\end{enumerate}
\end{proposition}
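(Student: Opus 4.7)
The strategy is to reduce everything to the already-established properties of the implicit Euler scheme for the additive-noise equation (Proposition~\ref{probabilistic_convergence_Euler}) via the change of variable $Z = F(X)$. Write $x_0 := F^{-1}(z_0)$ and, for the sake of readability, $X := X(x_0)$, $X^n := X^n(x_0)$, so that $Z(z_0) = F\circ X$ and $Z^n(z_0) = F\circ X^n$ by construction.

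For assertion (1), I would fix $\omega$ in the full-measure set on which Proposition~\ref{probabilistic_convergence_Euler} gives $\|X^n - X\|_{\infty,T} = \textrm O(n^{-\alpha})$. On this set, $X$ is continuous and strictly positive on $[0,T]$ by Proposition~\ref{existence_solution}, hence $x_* := \inf_{[0,T]} X_t > 0$ and $x^* := \sup_{[0,T]} X_t < \infty$. The uniform convergence of $X^n$ towards $X$ therefore yields $n_0\in\mathbb N^*$ and constants $0 < m \leqslant M < \infty$ such that $m \leqslant X_t^n \leqslant M$ for all $t\in[0,T]$ and all $n\geqslant n_0$; including the finitely many $X^n$ for $n < n_0$ (which are $\mathbb R_+^*$-valued by Proposition~\ref{existence_Euler_scheme}, continuous on $[0,T]$ by construction) only shrinks $m$ and enlarges $M$. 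By Assumption~\ref{change_of_variable_assumption}.(1), $F$ is $C^1$ on $[m,M]\subset\mathbb R_+^*$, hence Lipschitz with constant $\|\dot F\|_{\infty,[m,M]}$, and
\[
 \|Z^n(z_0) - Z(z_0)\|_{\infty,T} \leqslant \|\dot F\|_{\infty,[m,M]}\,\|X^n - X\|_{\infty,T} = \textrm O(n^{-\alpha}).
\]

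For assertion (2), I would first bound $\|Z^n(z_0)\|_{\infty,T}$ using the polynomial growth of $F$ from Assumption~\ref{F_additional_1}:
\[
 \|Z^n(z_0)\|_{\infty,T} \leqslant C\bigl(1 + \|X^n(x_0)\|_{\infty,T}^{k}\bigr).
\]
Proposition~\ref{probabilistic_convergence_Euler} gives $\sup_{n\in\mathbb N^*}\|X^n(x_0)\|_{\infty,T}\in L^q(\Omega,\mathbb P)$ for every $q > 0$, in particular for $q = kp$, so $\sup_{n\in\mathbb N^*}\|Z^n(z_0)\|_{\infty,T}\in L^p(\Omega,\mathbb P)$. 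Together with the almost sure convergence from (1), this uniform $L^p$-boundedness yields uniform integrability of $\|Z^n(z_0) - Z(z_0)\|_{\infty,T}^{p}$, and Vitali's theorem gives the $L^p$-convergence.

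The only mildly delicate point is the construction of $[m,M]$: Propositions \ref{solution_estimate} and \ref{estimate_Euler_scheme} give uniform-in-$n$ upper bounds for $\|X^n\|_{\infty,T}$, but no a priori uniform strictly positive lower bound. I bypass this by exploiting the almost sure uniform convergence $X^n \to X$ pathwise (which forces $\inf_{n\geqslant n_0,\,t} X_t^n > 0$ pathwise), plus strict positivity of each remaining $X^n$ for $n<n_0$. This is the only step that uses properties of the solution $X$ beyond the rate from Proposition~\ref{probabilistic_convergence_Euler}, and it is essentially topological, so no new estimate is required.
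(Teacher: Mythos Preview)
Your proof is correct and follows essentially the same approach as the paper: local Lipschitz continuity of $F$ on a pathwise compact interval transfers the $\textrm O(n^{-\alpha})$ rate from Proposition~\ref{probabilistic_convergence_Euler}, and polynomial growth of $F$ together with the uniform $L^p$-bounds on $X^n$ plus Vitali gives the $L^p$-convergence. If anything, you are more careful than the paper on the ``mildly delicate point'': the paper simply writes $[X_*,X^*]$ with $X_* := \inf_{n,t} X_t\wedge X_t^n$ and asserts $F$ is Lipschitz there without justifying $X_* > 0$, whereas your argument via $n_0$ and the finitely many remaining schemes makes this explicit.
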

%


%
\begin{proof}
Let $p > 0$ be arbitrarily chosen. Put
\begin{displaymath}
X_* :=
\inf_{n\in\mathbb N^*}
\inf_{t\in [0,T]}
X_t[F^{-1}(z_0)]
\wedge X_{t}^{n}[F^{-1}(z_0)]
\end{displaymath}
and
\begin{displaymath}
X^* :=
\sup_{n\in\mathbb N^*}
\sup_{t\in [0,T]}
X_t[F^{-1}(z_0)]\vee
X_{t}^{n}[F^{-1}(z_0)].
\end{displaymath}
Under Assumption \ref{change_of_variable_assumption}, the function $F$ is Lipschitz continuous on $[X_*,X^*]$. So,
\begin{eqnarray*}
 \|Z^n(z_0) - Z(z_0)\|_{\infty,T}
 & \leqslant &
 \|\dot F\|_{\infty,[X_*,X^*]}
 \|X^n[F^{-1}(z_0)] - X[F^{-1}(z_0)]\|_{\infty,T}\\
 & &
 \xrightarrow[n\rightarrow\infty]{\textrm{a.s.}} 0
\end{eqnarray*}
with rate of convergence $\textrm O(n^{-\alpha})$, by Proposition \ref{probabilistic_convergence_Euler}. Under Assumption \ref{F_additional_1}, for every $t\in [0,T]$,
\begin{displaymath}
0 < Z_{t}^{n}(z_0)
\leqslant
C[1 + |X_{t}^{n}[F^{-1}(z_0)]|^k].
\end{displaymath}
Then, by Proposition \ref{probabilistic_convergence_Euler} :
\begin{displaymath}
\sup_{n\in\mathbb N^*}\|Z^n(z_0)\|_{\infty,T}\in
L^p(\Omega,\mathbb P)
\end{displaymath}
for every $p > 0$. So, by Vitali's theorem :
\begin{displaymath}
\lim_{n\rightarrow\infty}
\mathbb E[\|Z^n(z_0) - Z(z_0)\|_{\infty,T}^{p}] = 0.
\end{displaymath}
\end{proof}
\noindent
Assume that $B$ is a fractional Brownian motion of Hurst parameter $H\in ]0,1[$. The values of all the parameters involving in the expressions of $b$ and $F$ are supposed to be known.
\\
\\
As established at Subsection 3.3, $Y(x_0) =\Theta[X(x_0)]$ for every $x_0 > 0$. So,
\begin{displaymath}
Y[F^{-1}(z_0)] =
\Xi[Z(z_0)]
\end{displaymath}
with $\Xi :=\Theta\circ F^{-1}$. Since the parameter $(H,\sigma)$ doesn't involve in the expression of the map $\Xi$, an observation $z(z_0)$ of $Z(z_0)$ provides an observation of $Y[F^{-1}(z_0)]$ by applying the transformation $\Xi$ to $z(z_0)$. Therefore, a consistent estimator of $(H,\sigma)$ for the Ornstein-Uhlenbeck process $Y[F^{-1}(z_0)]$ provides an estimation of the real value of $(H,\sigma)$ from the observation $y[F^{-1}(z_0)] :=\Xi[z(z_0)]$.
\\
The same arguments work on $\Xi^n :=\Theta^n\circ F^{-1}$ applied to $Z^n(z_0)$ instead of $\Xi$ applied to $Z(z_0)$.
%


%
\subsection{A fractional Heston model}
Financial market models with a fractional stochastic volatility have been already studied in several papers. For instance, the volatility in the Heston model (see Heston \cite{HESTON93}) has been replaced by a fractional process in Comte, Coutin and Renault \cite{CCR12}, taking benefits of its long memory. The subsection deals with another fractional Heston model.
\\
\\
On fractional financial market models, see also Rogers \cite{ROGERS97} and Cheridito \cite{CHERIDITO01}.
\\
\\
Let $T > 0$ be arbitrarily fixed, and assume that $B$ is a fractional Brownian motion of Hurst parameter $H\in ]0,1[$, defined on $[0,T]$. The filtration generated by $B$ is denoted by $\mathbb F := (\mathcal F_t)_{t\in [0,T]}$. By Decreusefond and Ust\"unel \cite{DU99} or Nualart \cite{NUALART06}, Section 5.1.3 ; there exists a unique Brownian motion $B^*$, generating the same filtration $\mathbb F$ than $B$, such that :
\begin{displaymath}
B_t :=
\int_{0}^{t}
K_H(t,s)dB_{s}^{*}
\textrm{ ; }\forall t\in [0,T]
\end{displaymath}
where
\begin{displaymath}
K_H(t,s) :=
\frac{(t - s)^{H - 1/2}}{\Gamma(H + 1/2)}
\textrm F(1/2 - H,H - 1/2,H + 1/2,1 - t/s)\mathbf 1_{[0,t[}(s)
\end{displaymath}
for every $(s,t)\in\mathbb R_{+}^{2}$, and $\textrm F$ is the Gauss hyper-geometric function (see Lebedev \cite{LEBEDEV65}).
\\
\\
Let $\mathbb H^2$ be the space of $\mathbb F$-progressively measurable stochastic processes $(H_t)_{t\in [0,T]}$ such that
\begin{displaymath}
\mathbb E\left(\int_{0}^{T}H_{t}^{2}dt\right) <\infty.
\end{displaymath}
Since $\mathbb F$ is the filtration generated by both $B$ and $B^*$, stochastic processes of $\mathbb H^2$ are integrable with respect to $B^*$ in the sense of It\^o.
\\
\\
\textbf{Notation.} For every $H\in\mathbb H^2$, the It\^o stochastic integral of $H$ with respect to $B^*$ is denoted by
\begin{displaymath}
\left(
\int_{0}^{t}
H_s
\textrm dB_{s}^{*}\right)_{t\in [0,T]}.
\end{displaymath}
Consider the following generalization of the Heston model :
\begin{eqnarray}
 \label{fractional_Heston_prices}
 S_t & = & \displaystyle{S_0 +\int_{0}^{t}\mu_uS_udu +
 \int_{0}^{t}\varphi(Z_u)S_u\textrm dB_{u}^{*}}
 \textrm{ ; }S_0 > 0\\
 \label{fractional_Heston_volatility}
 Z_t & = & \displaystyle{z_0 +\int_{0}^{t}(v - wZ_u)du +
 \zeta\int_{0}^{t}Z_{u}^{\beta}dB_u}
 \textrm{ ; }z_0 > 0
\end{eqnarray}
where $\mu\in C^0([0,T],\mathbb R)$, $v,w > 0$, $\zeta\in\mathbb R^*$, $\beta\in]1 - H,1[$ and $\varphi :\mathbb R_+\rightarrow\mathbb R$ is a continuous function such that :
\begin{equation}\label{polynomial_growth_Heston}
\forall x\in\mathbb R_+
\textrm{, }
|\varphi(x)|\leqslant c(1 + x^n)
\end{equation}
with $c > 0$ and $n\in\mathbb N^*$.
%


%
\begin{proposition}\label{solution_generalized_Heston}
Equation (\ref{fractional_Heston_volatility}) has a unique pathwise solution $Z(z_0)$ such that $Z^{\varphi}(z_0) :=\varphi\circ Z(z_0)\in\mathbb H^2$, and Equation (\ref{fractional_Heston_prices}) has a unique solution $S(z_0)$ in the sense of It\^o such that :
\begin{displaymath}
S_t(z_0) :=
S_0\exp\left[\int_{0}^{t}\left[\mu_s -\frac{1}{2}\varphi^2[Z_s(z_0)]\right]ds +
\int_{0}^{t}\varphi[Z_s(z_0)]\normalfont\textrm dB_{s}^{*}\right]
\end{displaymath}
for every $t\in [0,T]$.
\end{proposition}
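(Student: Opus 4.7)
The plan proceeds in three steps, treating the volatility equation, the integrability of $Z^\varphi(z_0)$, and the price equation in turn.

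\textbf{Volatility equation.} I identify Equation (\ref{fractional_Heston_volatility}) as an instance of Equation (\ref{main_multiplicative_equation}) via the first Heston-type example following it. Set $\gamma := (1-\beta)/\beta > 0$, so $\beta = \gamma/(\gamma+1)$, and pick $\alpha \in\,]1-\beta,H[$, which is non-empty because $\beta > 1-H$; this choice guarantees simultaneously that $B$ has $\alpha$-H\"older continuous paths and that $1-\alpha < \alpha\gamma$. With $\kappa := \gamma+1$, $u := 1/(\gamma+1)$, $\sigma := \zeta/(\gamma+1)$, $b(x) := u(vx^{-\gamma}-wx)$, and $F_\kappa(x) := x^\kappa$, the function $b$ satisfies Assumption \ref{drift_assumption} and $F_\kappa$ satisfies assumptions \ref{change_of_variable_assumption} and \ref{F_additional_1}. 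Proposition \ref{existence_solution} then yields a unique $\mathbb R_+^*$-valued solution $X[F_\kappa^{-1}(z_0)]$ of Equation (\ref{main_equation}), and by the change-of-variable computation displayed just before the Heston examples, $Z(z_0) := F_\kappa \circ X[F_\kappa^{-1}(z_0)]$ is the unique pathwise solution of (\ref{fractional_Heston_volatility}).

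\textbf{Integrability of $Z^\varphi(z_0)$.} Since $Z(z_0)$ is continuous and adapted to $\mathbb F$ and $\varphi$ is continuous, $Z^\varphi(z_0)$ is $\mathbb F$-progressively measurable. The polynomial growth condition (\ref{polynomial_growth_Heston}) combined with Proposition \ref{integrability_solution_multiplicative} gives $\|\varphi\circ Z(z_0)\|_{\infty,T} \in L^p(\Omega,\mathbb P)$ for every $p>0$, and hence
\begin{displaymath}
\mathbb E\left(\int_0^T \varphi^2[Z_s(z_0)]ds\right) \leq T\,\mathbb E(\|\varphi\circ Z(z_0)\|_{\infty,T}^2) < \infty,
\end{displaymath}
so $Z^\varphi(z_0) \in \mathbb H^2$.

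\textbf{Price equation.} With $Z^\varphi(z_0) \in \mathbb H^2$ and $\mu$ continuous, the process $L_t := \int_0^t \mu_s ds + \int_0^t \varphi[Z_s(z_0)]\textrm dB_s^*$ is a well-defined continuous semimartingale with quadratic variation $[L]_t = \int_0^t \varphi^2[Z_s(z_0)]ds$. The process $S_t(z_0) := S_0 \exp(L_t - [L]_t/2)$ agrees with the explicit formula in the statement, is strictly positive and continuous, and It\^o's formula applied to $x\mapsto S_0 e^x$ yields $dS_t(z_0) = S_t(z_0)dL_t = \mu_t S_t(z_0)dt + \varphi[Z_t(z_0)]S_t(z_0)\textrm dB_t^*$, so $S(z_0)$ solves (\ref{fractional_Heston_prices}). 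For uniqueness, given any other solution $\widetilde S$, It\^o's calculation on the ratio $\widetilde S_t/S_t(z_0)$, using the explicit formula for $d(S_t(z_0))^{-1}$ and the cancellation of drift and cross-variation terms, yields $d(\widetilde S_t/S_t(z_0))=0$, hence $\widetilde S \equiv S(z_0)$.

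The main obstacle is the first step: threading the three constraints $\alpha \in\,]0,H[$, $1-\alpha < \alpha\gamma$, and $\beta = \gamma/(\gamma+1)$ through the hypothesis $\beta \in\,]1-H,1[$, and verifying that the change-of-variable formula from the opening of Section 4 really applies to this $F_\kappa$ and $b$. Steps 2 and 3 are then routine consequences of the paper's earlier results and of classical It\^o calculus.
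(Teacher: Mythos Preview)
Your proof is correct and follows essentially the same route as the paper's: identify Equation (\ref{fractional_Heston_volatility}) as an instance of (\ref{main_multiplicative_equation}) via the power change of variable $F_\kappa(x)=x^{\gamma+1}$, invoke Proposition \ref{existence_solution} and the rough change-of-variable formula for existence and uniqueness of $Z(z_0)$, then use Proposition \ref{integrability_solution_multiplicative} together with (\ref{polynomial_growth_Heston}) for the $\mathbb H^2$-property, and It\^o's formula for the price equation. One slip: your definition $\gamma := (1-\beta)/\beta$ should read $\gamma := \beta/(1-\beta)$, which is the value consistent with your own identity $\beta = \gamma/(\gamma+1)$ and with the paper's example---with the correct $\gamma$, the equivalence $1-\alpha < \alpha\gamma \Longleftrightarrow \alpha > 1-\beta$ that you rely on indeed holds.
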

%


%
\begin{proof}
Put $x_0 := z_{0}^{1-\beta}$, $\sigma := \zeta(1-\beta)$ and
\begin{displaymath}
b(x) := (1-\beta)(vx^{-\gamma} - wx)
\textrm{ ; }\forall x > 0
\end{displaymath}
where $\gamma :=\beta /(1-\beta)$. By Proposition \ref{existence_solution} together with the rough change of variable formula, the stochastic process $Z(z_0)$ defined by
\begin{displaymath}
Z_t(z_0) := X_{t}^{\gamma + 1}(x_0)
\textrm{ $;$ }
\forall t\in [0,T],
\end{displaymath}
is the unique pathwise solution of Equation (\ref{fractional_Heston_volatility}).
\\
\\
Let $\alpha\in ]0,H[$ be arbitrarily chosen. For every $\omega\in\Omega$ and $t\in [0,T]$, $Z_{t}^{\varphi}(z_0,\omega)$ is the image of $(B_s(\omega))_{s\in [0,t]}$ by the map
\begin{displaymath}
\varphi\circ x_{t}^{\gamma + 1}(x_0,.),
\end{displaymath}
which is continuous from $C^{\alpha}([0,t],\mathbb R)$ into $\mathbb R$ by Proposition \ref{lipschitz_continuity_solution}. So, $Z_{t}^{\varphi}(z_0)$ is $\mathcal F_t$-measurable for every $t\in [0,T]$. In other words, the stochastic process $Z^{\varphi}(z_0)$ is $\mathbb F$-adapted, and even $\mathbb F$-progressively measurable because the paths of $X(x_0)$ are continuous. By Proposition \ref{integrability_solution_multiplicative} together with (\ref{polynomial_growth_Heston}), $Z^{\varphi}(z_0)$ belongs to $\mathbb H^2$.
\\
\\
Therefore, $Z^{\varphi}(z_0)$ is integrable with respect to $B^*$ in the sense of It\^o, and by It\^o's formula (see Revuz and Yor \cite{RY99}, Theorem IV.3.3), the stochastic process $S(z_0)$ defined above is the unique solution, in the sense of It\^o, of Equation (\ref{fractional_Heston_prices}).
\end{proof}
\noindent
According to the usual definition of the Heston model, put $\varphi(x) :=\sqrt x$ for every $x\in\mathbb R_+$.
\\
\\
Consider a financial market consisting of one risky asset of prices process $S(z_0)$ and one risk-free asset of prices function $S^0$, which is the solution of the following ordinary differential equation :
\begin{equation}\label{risk_free_asset_equation}
S_{t}^{0} = S_{0}^{0} +
\int_{0}^{t}r_uS_{u}^{0}du
\end{equation}
where $r\in C^0([0,T],\mathbb R)$. Since $S(z_0)$ is the solution of Equation (\ref{fractional_Heston_prices}) in the sense of It\^o and $S^0$ is the solution of Equation (\ref{risk_free_asset_equation}), by the integration by part formula (see Revuz and Yor \cite{RY99}, Proposition IV.3.1), the actualized prices process $\widetilde S(z_0) := S(z_0)/S^0$ is the solution, in the sense of It\^o, of the following stochastic differential equation :
\begin{displaymath}
\widetilde S_t =
\widetilde S_0 +
\int_{0}^{t}
\sqrt{Z_s(z_0)}\widetilde S_s\textrm dB_{s}^{*}(z_0)
\end{displaymath}
with
\begin{displaymath}
B_{t}^{*}(z_0) :=
\int_{0}^{t}\frac{\mu_s - r_s}{\sqrt{Z_s(z_0)}}ds + B_{t}^{*}
\end{displaymath}
for every $t\in [0,T]$.
%


%

%

\begin{thebibliography}{99}
 \bibitem{ARNOLD98} L. Arnold. \textit{Random Dynamical Systems.} Springer Monographs in Mathematics SMM, Springer, 1998.
 \bibitem{BL08} C. Berzin and J.R. Le\'on. \textit{Estimation in Models Driven by Fractional Brownian Motion.} Annales de l'Institut Henri Poincar\'e, Probabilit\'es et Statistiques 44, 2, 191-213, 2008.
 \bibitem{BI13} A. Brouste and S. Iacus. \textit{Parameter Estimation for the Discretely Observed Fractional Ornstein-Uhlenbeck Process and the Yuima R Package.} Comput. Stat. 28, 1529-1547, 2013.
 \bibitem{CHERIDITO01} P. Cheridito. \textit{Regularizing Fractional Brownian Motion with a View Towards Stock Price Modeling.} Th\`ese de doctorat de l'universit\'e de Z\"urich, 2001.
 \bibitem{CKM03} P. Cheridito, H. Kawaguchi and M. Maejima. \textit{Fractional Ornstein-Uhlenbeck Processes.} Electronic Journal of Probability 8, 3, 1-14, 2003.
 \bibitem{CCR12} F. Comte, L. Coutin and E. Renault. \textit{Affine Fractional Stochastic Volatility Models.} Annals of Finance 8, 2-3, 337-378, 2012.
 \bibitem{DU99} L. Decreusefond and A.S. Ust\"unel. \textit{Stochastic Analysis of the Fractional Brownian Motion.} Potential Analysis 10, 177-214, 1999.
 \bibitem{FERNIQUE70} X. Fernique. \textit{Int\'egrabilit\'e des vecteurs gaussiens.} C.R. Acad. Sci. Paris S\'er. A-B 270:A1698-A1699, 1970.
 \bibitem{FV10} P. Friz and N. Victoir. \textit{Multidimensional Stochastic Processes as Rough Paths : Theory and Applications.} Cambridge Studies in Applied Mathematics 120, Cambridge University Press, 2010.
 \bibitem{GAKN09} M.J. Garrido-Atienza, P.E. Kloeden and A. Neuenkirch. \textit{Discretization of Stationary Solutions of Stochastic Systems Driven by Fractional Brownian Motion.} Appl. Math. Optim. 60, 151-172, 2009.
 \bibitem{GLR11} B. Gess, W. Liu and M. R\"ockner. \textit{Random Attractors for a Class of Stochastic Partial Differential Equations driven by General Additive Noise.} Journal of Differential Equations 251, 1225-1253, 2011.
 \bibitem{GL09} M. Gubinelli and A. Lejay. \textit{Global Existence for Rough Differential Equations under Linear Growth Condition.} hal-00384327.
 \bibitem{HAIRER05} M. Hairer. \textit{Ergodicity of Stochastic Differential Equations Driven by Fractional Brownian Motion.} Ann. Probab. 33, 2, 703-758, 2005.
 \bibitem{HESTON93} S.L. Heston. \textit{A Closed-Form Solution for Options with Stochastic Volatility with Applications to Bond and Currency Options.} The Review of Financial Studies 6, 327-343, 1993.
 \bibitem{HNS08} Y. Hu, D. Nualart and X. Song. \textit{A Singular Stochastic Differential Equation Driven by Fractional Brownian Motion.} Statistics and Probability Letters 78, 14, 2075-2085, 2008.
 \bibitem{HN02} D.P. Huy and T.T. Nguyen. \textit{A Note on Fractional Stochastic Verhulst Equation with Small Perturbation.} Soochow Journal of Mathematics 28, 1, 57-64, 2002.
 \bibitem{LEBEDEV65} N.N. Lebedev. \textit{Special Functions and their Applications.} Revised English edition. Translated and edited by Richard A. Silverman. Prentice-Hall, Englewood Cliffs, NJ, 1965.
 \bibitem{LEJAY10} A. Lejay. \textit{Controlled Differential Equations as Young Integrals : A Simple Approach.} Journal of Differential Equations 248, 1777-1798, 2010.
 \bibitem{LYONS98} T. Lyons. \textit{Differential Equations Driven by Rough Signals.} Rev. Mat. Iberoamericana 14, 2, 215-310, 1998.
 \bibitem{LQ02} T. Lyons and Z. Qian. \textit{System Control and Rough Paths.} Oxford Mathematical Monographs, Oxford University Press, 2002.
 \bibitem{MARIE14} N. Marie. \textit{A Generalized Mean-Reverting Equation and Applications.} ESAIM:PS, DOI:10.1051/ps/2014002, 2014.
 \bibitem{MARIE_sensitivities} N. Marie. \textit{Sensitivities via Rough Paths.} ESAIM:PS, DOI:10.1051/ps/2015001, 2015.
 \bibitem{MS04} B. Maslowski and B. Schmalfuss. \textit{Random Dynamical Systems and Stationary Solutions of Differential Equations Driven by the Fractional Brownian Motion.} Stoch. Anal. Appl. 22, 1577-1607, 2004.
 \bibitem{MELICHOV11} D. Melichov. \textit{On Estimation of the Hurst Index of Solutions of Stochastic Differential Equations.} Doctoral dissertation, Vilnius Gediminas Technical University, 2011.
 \bibitem{MOLCHAN02} G.M. Molchan. \textit{On the Maximum of Fractional Brownian Motion.} Theory. Probab. Appl. 44, 97-102, 2000.
 \bibitem{NT14} A. Neuenkirch and S. Tindel. \textit{A Least Square-Type Procedure for Parameter Estimation in Stochastic Differential Equations with Additive Fractional Noises.} arXiv:1111.1816v1.
 \bibitem{NV09} I. Nourdin and F. Viens. \textit{Density Estimates and Concentration Inequalities with Malliavin Calculus.} Electronic Journal of Probability 14, 78, 2287-2309, 2009.
 \bibitem{NUALART06} D. Nualart. \textit{The Malliavin Calculus and Related Topics. Second Edition.} Probability and Its Applications, Springer, 2006.
 \bibitem{RY99} D. Revuz and M. Yor. \textit{Continuous Martingales and Brownian Motion. Third Edition.} A Series of Comprehensive Studies in Mathematics 293, Springer, 1999.
 \bibitem{ROGERS97} L.C.G. Rogers. \textit{Arbitrage with Fractional Brownian Motion.} Mathematical Finance 7, 1, 95-105, 1997.
 \bibitem{TV07} C.A. Tudor and F.G. Viens. \textit{Statistical Aspect of the Fractional Stochastic Calculus.} Ann. Statist. 35, 3, 1183-1212, 2007.
\end{thebibliography}
\end{document}